\theoremstyle{plain}                    
\newtheorem{teo}{Theorem}[section]     
\newtheorem{theoremalpha}{Theorem}
\newtheorem{coralpha}[theoremalpha]{Corollary}
\newtheorem{prop}[teo]{Proposition}
\newtheorem{fact}[teo]{Fact}
\newtheorem{cor}[teo]{Corollary}       
\newtheorem{lem}[teo]{Lemma}            
\newtheorem{deflem}[teo]{Definition/Lemma} 
\theoremstyle{definition}               
\newtheorem{notations}[teo]{}
\newtheorem{defin}[teo]{Definition}
\newtheorem{ese}[teo]{Example} 
\newtheorem{deflemma}[teo]{Definition-Lemma}
\theoremstyle{remark}
\newtheorem{rmk}[teo]{Remark}
\newenvironment{sis}{\left\{\begin{aligned}}{\end{aligned}\right.}
\newcommand{\Filippo}[1]{{\color{green}{Filippo: #1}}}
\newcommand{\bbC}{{\mathbb C}}
\newcommand{\bbQ}{{\mathbb Q}}
\newcommand{\bbZ}{{\mathbb Z}}
\newcommand{\bbP}{{\mathbb P}}
\newcommand{\bbG}{{\mathbb G}}
\newcommand{\bbE}{{\mathbb E}}
\newcommand{\bbF}{{\mathbb F}}
\newcommand{\cC}{{\mathcal C}}
\newcommand{\cE}{{\mathcal E}}
\newcommand{\cJ}{{\mathcal J}}
\renewcommand{\cL}{{\mathcal L}}
\newcommand{\cM}{{\mathcal M}}
\newcommand{\cN}{{\mathcal N}}
\newcommand{\cO}{{\mathcal O}}
\newcommand{\cP}{{\mathcal P}}
\newcommand{\cQ}{{\mathcal Q}}
\newcommand{\un}{\underline}
\newcommand{\ov}{\overline}
\newcommand{\wt}{\widetilde}
\newcommand{\wh}{\widehat}
\renewcommand{\rm}{\mathrm}
\newcommand{\scr}{\mathscr}
\numberwithin{equation}{section}
\DeclareMathOperator{\Pic}{Pic}
\DeclareMathOperator{\Aut}{Aut}
\DeclareMathOperator{\Spec}{Spec}
\DeclareMathOperator{\Br}{Br}
\DeclareMathOperator{\RPic}{RPic}
\DeclareMathOperator{\NS}{NS}
\DeclareMathOperator{\Hom}{Hom}
\DeclareMathOperator{\taut}{taut}
\DeclareMathOperator{\coker}{coker}
\renewcommand{\Im}{\text{Im}}
\DeclareMathOperator{\End}{End}
\DeclareMathOperator{\id}{id}
\DeclareMathOperator{\res}{res}
\DeclareMathOperator{\red}{red}
\DeclareMathOperator{\rk}{rk}
\DeclareMathOperator{\Gm}{\mathbb{G}_{m}}
\DeclareMathOperator{\PSL}{PSL}
\DeclareMathOperator{\GL}{GL}
\DeclareMathOperator{\SL}{SL}
\renewcommand{\div}{\mathrm{div}}
\DeclareMathOperator{\SO}{SO}
\DeclareMathOperator{\PSO}{PSO}
\DeclareMathOperator{\Spin}{Spin}
\DeclareMathOperator{\Sp}{Sp}
\DeclareMathOperator{\PSp}{PSp}
\DeclareMathOperator{\ord}{ord}
\DeclareMathOperator{\Bun}{\mathrm{Bun}}
\newcommand{\bgr}[1]{\mathfrak{Bun}_{#1,g,n}}
\DeclareMathOperator{\Bunr}{\mathfrak{Bun}}
\DeclareMathOperator{\Bil}{Bil}
\DeclareMathOperator{\Sym}{Sym}
\DeclareMathOperator{\ev}{ev}
\renewcommand{\ss}{\mathrm{ss}}
\newcommand{\ab}{\mathrm{ab}}
\newcommand{\ad}{\mathrm{ad}}
\renewcommand{\sc}{\mathrm{sc}}
\newcommand{\g}{\mathfrak g}
\newcommand{\roo}{\mathrm{roots}}
\newcommand{\coroo}{\mathrm{coroots}}
\newcommand{\wei}{\mathrm{weights}}
\newcommand{\cowei}{\mathrm{coweights}}
\DeclareMathOperator{\w}{wt}
\DeclareMathOperator{\obs}{obs}
\title{The line bundles on the moduli stack of principal bundles on families of curves}
\author{Roberto Fringuelli}
\address{Roberto Fringuelli, Dipartimento di Matematica, Universit\`a di Roma ``La Sapienza'',  Piazzale Aldo Moro 5, 00185 Roma, Italy}
\email{r.fringuelli@uniroma1.it}
\author{Filippo Viviani}
\address{Filippo Viviani, Dipartimento di Matematica, Universit\`a di Roma ``Tor Vergata'', Via della Ricerca Scientifica 1, 00133 Roma, Italy}
\email{viviani@mat.uniroma2.it}
\begin{document}

\begin{abstract}
Given a connected reductive algebraic group $G$ over an algebraically closed field, we investigate the Picard group of the moduli stack of principal $G$-bundles over an arbitrary family of smooth curves. 
\end{abstract}

\maketitle

\tableofcontents

\section{Introduction}

The aim of this paper, which is a sequel of the papers \cite{FV1} and \cite{FV2}, is to study 
 the Picard group of the \emph{moduli stack of (principal) $G$-bundles $\Bun_G(C/S)$}, which parametrizes  $G$-bundles, where $G$ is a connected and smooth linear algebraic group  over a field $k=\ov k$,  over \emph{an arbitrary family} $\pi: C\to S$ of (connected, smooth and projective) $k$-curves of genus $g=g(C/S)\geq 0$. 
 
 \vspace{0.2cm}
 
 The moduli stack $\Bun_G(C)$ of (principal) $G$-bundles, where $G$ is a complex reductive group and $C$ is a complex curve, has been deeply studied because of its relation to the Wess-Zumino-Witten (=WZW) model associated to $G$, which form a special class of rational conformal field theories, see \cite{Bea94}, \cite{Sor96} and \cite{Bea96} for nice surveys. In the WZW-model associated to a simply connected group $G$, the spaces of conformal blocks can be interpreted  as  spaces of  generalized theta functions, that is spaces of global sections of suitable line bundles (e.g. powers of determinant line bundles) on $\Bun_G(C)$, see \cite{BL, KNR94, Fal94, Pau96, LS97}. 
The above application to conformal field theory leads naturally to the study of the Picard group of $\Bun_G(C)$ (for $G$ reductive over $k=\ov k$), which was computed in increasing degree of generality thanks to the effort of many mathematicians:  the case of $\SL_r$ dates back to pioneering work of Drezet-Narasimhan in the late eighties \cite{DN}; the case of a simply connected, almost-simple $G$ is dealt with in \cite{KN97, LS97, SO99, Fa03, BK05}; the case of a semisimple almost-simple $G$ is dealt with in \cite{Laszlo, BLS98, Tel98}; the case of an arbitrary reductive group was finally established by Biswas-Hoffmann \cite{BH10}.

Motived by the long term project of generalizing the study of the vector bundle of conformal blocks (also known as Verlinde bundle) behind the simply connected case, the authors in \cite{FV1} and \cite{FV2} have computed the Picard group of $\Bun_G(\cC_{g,n}/\cM_{g,n})$, for the universal family $\cC_{g,n}\to \cM_{g,n}$ over the moduli stack of $n$-pointed curves of genus $g$, as well as the restriction homomorphism to the Picard group of the fibers of $\Bun_G(\cC_{g,n}/\cM_{g,n})\to \cM_{g,n}$, generalizing the previous works of Melo-Viviani \cite{MV} for $G=\Gm$ and Fringuelli \cite{Fri18} for $G=\GL_r$.

In this follow up paper, we study the Picard group of $\Bun_G(C/S)$ for an arbitrary family $C/S$ of curves over an algebraic stack $S$ (which, for tecnichal reasons, it is often assumed to be an integral and regular quotient stack), in which case very little was known before. A remarkable exception is the work of Faltings \cite{Fa03}, which provides a functorial identification $\Pic \Bun_G(C/S)\cong \bbZ$ for a simply connected and almost simple group $G$ and an arbitrary family $C/S$ admitting a section. 
Our Theorem \ref{Main:PicG>0} can be seen as a generalization of Faltings' result (see Corollary \ref{MCor:PicG>0} and the discussion following it).

\vspace{0.1cm}

We can now explain our main results focusing, in this introduction, on families $C/S$ of positive genus $g(C/S)>0$, and referring to the body of the paper for the sligthly different, and easier, case $g(C/S)=0$.

Recall that the stack $\Bun_{G}(C/S)$ is an algebraic stack, locally of finite type and smooth over $S$ and  its relative connected  components are in functorial bijection with the fundamental group $\pi_1(G)$ (see Fact \ref{F:propBunG}). We will denote the connected components and the restriction of the forgetful morphism by 
$$\Phi_G^{\delta}: \Bun_{G}^{\delta}(C/S)\to S \quad \text{ for any } \delta\in \pi_1(G). $$
The pull-back morphism at the level of the Picard groups is injective, so will often focus onto the relative Picard group
$$
\RPic \Bun_G^{\delta}(C/S):=\frac{\Pic\Bun_G^{\delta}(C/S)}{(\Phi^{\delta})^*(\Pic S)}.
$$


We proved in \cite[Thm. A]{FV1} that if $\red: G\twoheadrightarrow G^{\red}$ is the  reductive quotient of $G$, i.e. the quotient of $G$ by its unipotent radical, then for any $\delta\in \pi_1(G)\xrightarrow[\cong]{\pi_1(\red)}\pi_1(G^{\red})$ the pull-back homomorphism 
$$
\red_\#^*:\Pic(\Bun_{G^{\red}(C/S)}^{\delta})\xrightarrow{\cong} \Pic(\Bun_{G}^{\delta}(C/S)) 
$$
is an isomorphism.   Hence, throughout this paper, we will restrict to the case of a \emph{reductive group} $G$. 

\vspace{0.2cm}

A first source of line bundles on $\Bun_G^{\delta}(C/S)$ comes from the determinant of cohomology $d_{\wh \pi}(-)$ and the Deligne pairing $\langle -,-\rangle_{\wh \pi}$ of line bundles on the universal curve $\wh \pi:C\times_S \Bun_G^{\delta}(C/S)\to\Bun_G^{\delta}(C/S)$.
To be more precise,  any character $\chi\in \Lambda^*(G):=\Hom(G, \Gm)$ gives rise to a morphism of $S$-stacks 
$$
\chi_\#:\Bun_G^{\delta}(C/S)\to\Bun_{\Gm}^{\chi(d)}(C/S)
$$
and, by  pulling back via $\chi_{\#}$ the universal $\Gm$-bundle (i.e. line bundle) on the universal curve $C\times_S \Bun_{\Gm}^{\chi(\delta)}(C/S)\to\Bun_{\Gm}^{\chi(\delta)}(C/S)$, we get a line bundle $\mathcal L_{\chi}$ on  $C\times_S \Bun_G^{\delta}(C/S)$. Then, using these line bundles $\mathcal L_{\chi}$  and the line bundles $\{p_1^*(M)\::\: M\in \Pic(C)\}$ pull-backed from the family $C/S$, we define the following line bundles, that we call \emph{tautological line bundles}, on $\Bun_G^{\delta}(C/S)$
\begin{equation*}\label{E:tautINT}
\begin{aligned}
&\left\{d_{\wh \pi}(\cL_{\chi}(M))\: : \chi\in \Lambda^*(G), M\in \Pic(C)\right\},\\ 
& \left\{\langle \cL_{\chi}(M),\cL_{\mu}(N)\rangle_{\wh \pi}: \chi,\mu\in \Lambda^*(G), M,N\in \Pic(C) \right\},\\
\end{aligned}
\end{equation*}
where we set $\cL_{\chi}(M):=\cL_{\chi}(p_1^*(M))$ for simplicity. 

Note that since any character of $G$ factors through the abelianization of $\ab:G\to G^{\ab}$ of $G$, 
then the tautological line bundles lie in the subgroup 
$$
\Pic \Bun_{G^{\ab}}^{\delta^{\ab}}(C/S)\stackrel{\ab_{\sharp}^*}{\hookrightarrow} \Pic \Bun_{G}^{\delta}(C/S),
$$
where $\delta^{\ab}:=\pi_1(\ab)(\delta)$. 

The next Theorem describes, for an arbitrary torus $T$ and an arbitrary $d\in \pi_1(T)$ (which includes the case $T=G^{\ab}$ and $\delta^{\ab}=d$), the structure of the
\emph{tautological subgroup} 
$$\Pic^{\taut} \Bun_{T}^{d}(C/S)\subseteq \Pic \Bun_{T}^{d}(C/S)$$
 generated by the tautological line bundles (or, in other words, it encodes all the relations among tautological line bundles), and it describes when  it coincides with the full Picard group.

\begin{theoremalpha}\label{Main:PicT>0}(see Theorems \ref{T:PicBunT>0}, \ref{T:all-taut})
Let $C\to S$ be a family of curves of genus $g>0$ on a regular and integral quotient stack. Let $T$ be a torus  and fix $d\in \Lambda(T)=\pi_1(T)$.
\begin{enumerate}
\item \label{Main:PicT>01} The relative tautological  Picard group of $\Bun_{T}^{d}(C/S)$ sits in the following exact sequence,  functorial in $S$ and in $T$:
\begin{equation*}
0 \to  \Lambda^*(T)\otimes \RPic(C/S)  \xrightarrow{i_{T}^{d}}  \RPic^{\taut}\Bun_{T}^{d}(C/S) \xrightarrow{\gamma_{T}^{d}}  \Bil^s(\Lambda(T)) \to 0 
\end{equation*}
where $\RPic(C/S)$ is the relative Picard group of $C/S$, and  the homomorphisms $i_{T}^{d}$ and $\gamma_{T}^{d}$ are defined by 
$$i_{T}^{d}(\chi\otimes M)=\langle \cL_{\chi}, M\rangle_{\wh \pi} \quad \text{ and } \quad \gamma_{T}^{d}(d_{\wh \pi}(\cL_{\chi}(M))=\chi\otimes \chi.$$
\item  \label{Main:PicT>02}   If $\End(J_{C_{\ov \eta}}) =\bbZ$ for a geometric generic point $\ov \eta\to S$ and the natural map $\Pic(C)\rightarrow \Pic_{C/S}(S)$ is surjective, then 
$$
\Pic\Bun_{T}^{d}(C/S)=\Pic^{\taut}\Bun_{T}^{d}(C/S).
$$
\end{enumerate}
\end{theoremalpha}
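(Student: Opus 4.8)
The plan is to reduce to the rank-one case and then analyse a Picard stack directly. Writing $T\cong\Gm^r$ with $\Lambda(T)=\bbZ^r\ni d=(d_1,\dots,d_r)$, one has $\Bun_T^d(C/S)=\Bun_{\Gm}^{d_1}(C/S)\times_S\cdots\times_S\Bun_{\Gm}^{d_r}(C/S)$, and each factor $\Bun_{\Gm}^{d_i}(C/S)$ is the degree-$d_i$ component of the relative Picard stack of $C/S$: a $\Gm$-gerbe $\Bun_{\Gm}^{d_i}(C/S)\to\mathbf{Pic}^{d_i}_{C/S}$ over the relative Picard algebraic space, which is itself a torsor under the Jacobian abelian scheme $J_{C/S}=\mathbf{Pic}^0_{C/S}$. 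I would therefore assemble the answer for $T$ from three inputs, glued via the functoriality in $T$ (a cocharacter $\Gm\to T$ and a character $T\to\Gm$ let one split off each summand $\bbZ\cdot(\chi\otimes\chi)$ and $\bbZ\cdot(\chi\otimes\mu+\mu\otimes\chi)$ of $\Bil^s(\Lambda(T))$, so the general case follows from $r=1,2$): (a) the Picard group of a single $\Gm$-gerbe, controlled by the \emph{weight} exact sequence $0\to\Pic\mathbf{Pic}^{d_i}_{C/S}\to\Pic\Bun_{\Gm}^{d_i}(C/S)\to\bbZ$, in which the weight of a tautological determinant $d_{\wh\pi}(\cL_\chi(M))$ is computed by Riemann--Roch; (b) the Picard group of a torsor under an abelian scheme, controlled by the theorem of the square, $0\to J_{C/S}(S)\to\RPic\mathbf{Pic}^{d_i}_{C/S}\to\NS_{C/S}\to0$, using autoduality $\hat J_{C/S}\cong J_{C/S}$; (c) a Künneth-type decomposition of $\Pic$ of the fibre product, whose mixed terms are the correspondences $\End J_{C/S}$ between the factors.

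For part \eqref{Main:PicT>01}, the map $i_T^d$ is well defined by biadditivity of the Deligne pairing and lands in $\ker\gamma_T^d$, since in $\langle\cL_\chi,p_1^*M\rangle_{\wh\pi}$ only one factor varies over $\Bun_T^d(C/S)$; its injectivity I would obtain by factoring it through the $J_{C/S}(S)$-piece in (b): for fixed $\chi$ the class $\langle\cL_\chi,p_1^*M\rangle_{\wh\pi}$ is fibrewise translation-invariant and numerically trivial, and the induced homomorphism $\RPic(C/S)\to J_{C/S}(S)$ is, through the autoduality of the Jacobian realised by the Poincaré bundle, the canonical injection. The homomorphism $\gamma_T^d$ is \emph{forced} by the Grothendieck--Riemann--Roch identity $d_{\wh\pi}(L_1\otimes L_2)\otimes d_{\wh\pi}(\cO)\cong d_{\wh\pi}(L_1)\otimes d_{\wh\pi}(L_2)\otimes\langle L_1,L_2\rangle_{\wh\pi}$, which yields $\gamma_T^d(d_{\wh\pi}(\cL_\chi(M)))=\chi\otimes\chi$ and $\gamma_T^d(\langle\cL_\chi,\cL_\mu\rangle_{\wh\pi})=\chi\otimes\mu+\mu\otimes\chi$; these span $\Bil^s(\Lambda(T))$, so $\gamma_T^d$ is surjective. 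Exactness in the middle is the only non-formal point: using GRR to clear determinants, together with Deligne's isomorphism $d_{\wh\pi}(L)^{\otimes2}\cong\langle L,L\otimes\omega_{\wh\pi}^{-1}\rangle_{\wh\pi}\otimes d_{\wh\pi}(\cO)^{\otimes2}$ and $\omega_{\wh\pi}=p_1^*\omega_{C/S}$, every element of $\ker\gamma_T^d$ is rewritten, modulo $(\Phi^d)^*\Pic(S)$, as a $\bbZ$-combination of pairings $\langle\cL_\chi,p_1^*M\rangle_{\wh\pi}$, hence lies in $\Im i_T^d$; for $r\geq2$ one additionally invokes $\End J_{C_{\ov\eta}}=\bbZ$ to rule out any extra mixed class surviving over the generic point.

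For part \eqref{Main:PicT>02}, the two hypotheses upgrade (a), (b), (c) to equalities with their tautological subgroups. Surjectivity of $\Pic(C)\to\Pic_{C/S}(S)$ makes every section of $\mathbf{Pic}^0_{C/S}=J_{C/S}$ come from a genuine fibrewise-degree-$0$ line bundle $M$ on $C$, hence equal to the Deligne pairing $\langle\cL_{\id},p_1^*M\rangle_{\wh\pi}$, so the tautological classes exhaust the $J_{C/S}(S)$-part of (b); and it guarantees that the weights of the determinants $d_{\wh\pi}(\cL_\chi(M))$ generate the image of the weight map in (a), i.e. that the $\Gm$-gerbe contributes nothing beyond tautological classes. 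The hypothesis $\End J_{C_{\ov\eta}}=\bbZ$ forces $\NS(J_{C_{\ov\eta}})=\bbZ\cdot[\Theta]$, so — spreading out over the integral regular quotient stack $S$ and using regularity to descend line bundles from the geometric generic fibre — the quotient $\NS_{C/S}$ in (b) is $0$ or $\bbZ\cdot[\Theta]$, with $[\Theta]$ represented, up to weight and up to $\Pic(S)$, by $d_{\wh\pi}(\cL_{\id})$ (its restriction to a geometric fibre being $\pm$ the canonical principal polarisation); the same hypothesis kills the mixed correspondences in (c). Combining (a), (b), (c) and restoring $(\Phi^d)^*\Pic(S)$ yields $\Pic\Bun_T^d(C/S)=\Pic^{\taut}\Bun_T^d(C/S)$.

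The step I expect to be the main obstacle is the exactness in the middle of the sequence in \eqref{Main:PicT>01} together with its higher-rank refinement — equivalently, the statement that there are no relations among tautological line bundles beyond biadditivity of the Deligne pairing and the GRR/Deligne identities, and no mixed tautological classes beyond the pairings $\langle\cL_\chi,\cL_\mu\rangle_{\wh\pi}$. This is precisely where one must control $\Pic^0$ and $\NS$ of the relative Picard space over a general base, their interaction with the $\Gm$-gerbe structure, and the correspondences between the Jacobians of the factors; the hypotheses that $S$ be an integral regular quotient stack are used here, to descend the relevant statements to the geometric generic fibre and to a smooth atlas. A secondary technical nuisance in part \eqref{Main:PicT>02} is that the Néron--Severi group of the fibres of $\mathbf{Pic}^d_{C/S}\to S$ may jump at special points of $S$, which one must verify does not enlarge $\Pic\mathbf{Pic}^d_{C/S}$ beyond what the generic fibre already detects.
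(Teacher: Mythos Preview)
There is a genuine confusion in your treatment of part~\eqref{Main:PicT>01}. That statement concerns only the \emph{tautological} subgroup $\RPic^{\taut}\Bun_T^d(C/S)$, which by definition is generated by the classes $d_{\wh\pi}(\cL_\chi(M))$; the content is therefore a statement about \emph{relations among tautological line bundles}, and requires no hypothesis on $\End(J_{C_{\ov\eta}})$ or on $\Pic(C)\to\Pic_{C/S}(S)$. Your invocation of $\End(J_{C_{\ov\eta}})=\bbZ$ ``for $r\geq 2$ to rule out any extra mixed class'' is misplaced: there are no extra mixed classes to rule out, because one is working inside the tautological subgroup from the start. The paper's proof of exactness in the middle is a direct computation: fix a basis $\{\chi_i\}$ of $\Lambda^*(T)$, write a general tautological class in $\ker\gamma_T^d$ as $\bigotimes_i d_{\wh\pi}(\cL_{\chi_i})^{a_i}\otimes\bigotimes_{j\le k}\langle\cL_{\chi_j},\cL_{\chi_k}\rangle^{b_{jk}}$ modulo $\Im i_T^d$, read off from $\gamma_T^d=0$ that $b_{jk}=0$ for $j<k$ and $a_i=-2b_{ii}$, and then use the Deligne identity $d_{\wh\pi}(\cL_{\chi})^{-2}\otimes\langle\cL_\chi,\cL_\chi\rangle\cong\langle\cL_\chi,\omega_{\wh\pi}\rangle\otimes d_{\wh\pi}(\cO)^{-2}$ to land in $\Im i_T^d$. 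No Jacobian geometry, no K\"unneth, no gerbe sequence is needed. Likewise, injectivity of $i_T^d$ is obtained by restriction to the geometric generic fibre $C_{\ov\eta}$ (where it is known from \cite{BH10}), rather than via autoduality.

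Your machinery (the $\Gm$-gerbe weight sequence, the theorem-of-the-square sequence for the $J_{C/S}$-torsor, and a K\"unneth-type term $\End(J_{C/S})$) is aimed at the \emph{full} Picard group, and is indeed the right toolkit for part~\eqref{Main:PicT>02}; but you have effectively tried to use it for both parts at once, which is what led you to impose the $\End=\bbZ$ hypothesis where it does not belong. For part~\eqref{Main:PicT>02} itself, your sketch is in the right spirit for the case where $\pi:C\to S$ has a section (one compares the tautological sequence with the abelian-scheme sequence for $J^0(C/S)\otimes\Lambda(T)$, and the hypothesis $\End(J_{C_{\ov\eta}})=\bbZ$ identifies $\NS(\Bun_T^0(C_{\ov\eta}))$ with $\Lambda^*(T)\oplus\Bil^s(\Lambda(T))$). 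However, you are missing the reduction step for the general case: when $C/S$ has no section, the paper passes to a smooth cover $S'\to S$ on which a section exists, and then runs a descent argument comparing $\RPic(C/S)\hookrightarrow\Pic_{C/S}(S)$ with the equalizer of the two pullbacks to $S''=S'\times_S S'$; this is exactly where the second hypothesis $\RPic(C/S)=\Pic_{C/S}(S)$ enters. Your proposal does not address how to get back from $S'$ to $S$, and your remark about ``descending line bundles from the geometric generic fibre using regularity'' is not the mechanism the paper uses (nor would it suffice on its own).
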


Moreover, in Theorem \ref{T:PicBunT>0}, we give two alternative presentations of the relative tautological Picard group of $\Bun_{T}^{d}(C/S)$. 

The case of families of genus zero curves is easier and it is dealt with in Theorem \ref{T:PicBunT-0}. In particular, we will show that the Picard group of $\Bun_T^d(C/S)$ is always generated by tautological line bundles if $g(C/S)=0$.

The above Theorem \ref{Main:PicT>0} was known:
\begin{itemize}
\item for a curve $C$ over $k=\ov k$ by Biswas-Hoffmann \cite{BH10} (see also \cite[Prop. 4.1.1]{FV1}), in which case the  hypothesis that $\End(J_{C_{\ov \eta}}) =\bbZ$ is also necessary for part \eqref{T:PicBunT>02} to hold (while the second assumption is always satisfied);
\item for the universal family $\cC_{g,n}/\cM_{g,n}$ over the stack of $n$-pointed curves of genus $g>0$ (which satisfies the assumption of part \eqref{T:PicBunT>02}, see 
Remarks \ref{R:eqRPic} and \ref{R:NSjump}\eqref{R:NSjump2}),  by work of the authors \cite[Thm. B]{FV1} and \cite[Thm. A]{FV2}, generalizing  previous results for $T=\Gm$, $g\geq 2$ and $n=0$ by Melo-Viviani \cite{MV}.
\end{itemize}

The other source of line bundles on $\Bun_G^{\delta}(C/S)$, for $G$ non abelian, is the \emph{trasgression map} introduced in \cite[Thm. 5.0.1, Rmk. 5.2.2]{FV1}:
\begin{equation*}
\tau_G^{\delta}=\tau^{\delta}_G(C/S):(\Sym^2 \Lambda^*(T_G))^{\mathscr W_G}\rightarrow \Pic \Bun_G^{\delta}(C/S),
\end{equation*}
which is the homomorphism, functorial in $S$ and $G$, uniquely characterized the following commutative diagram 
 \begin{equation*}
 \xymatrix{
(\Sym^2 \Lambda^*(T_G))^{\mathscr W_G}\ar[r]^{\tau^{\delta}_G} \ar@{^{(}->}[d] & \Pic \Bun_G^{\delta}(C/S)\ar@{^{(}->}[d]^{\iota_\sharp^*}, \\
\Sym^2 \Lambda^*(T_G)\ar[r]^(0.4){\tau^d_{T_G}} & \Pic \Bun_{T_G}^{d}(C/S), \\
\chi\cdot \chi' \ar[r] & \langle \cL_{\chi}, \cL_{\chi'}  \rangle_{\wh \pi} 
}
\end{equation*}
where $\iota:T_G\hookrightarrow G$ is a fixed maximal torus of $G$, $d\in \pi_1(T_G)$ is any lift of $\delta\in \pi_1(G)$, $\scr W_G:=\cN(T_G)/T_G$ is the Weyl group of $G$, 
and $(\Sym^2\Lambda^*(T_G))^{\scr W_G}$ is identified with the lattice  $\Bil^{s,\ev}(\Lambda(T_G))^{\scr W_G}$  of  $\scr W_G$-invariant even symmetric bilinear forms on the lattice $\Lambda(T_G)$ of cocharacters of $T_G$ (see Lemma \ref{L:coker-sym} for an explicit description).

The next Theorem describes when the Picard group of $\Bun_G^{\delta}(C/S)$ is generated by the image of the trasgression map and the pull-back of the line bundles on 
$\Bun_{G^{\ab}}^{\delta^{\ab}}(C/S)$.

\begin{theoremalpha}\label{Main:PicG>0}(see Theorems \ref{T:PicBunGAss}, \ref{T:PicBunGtf})
Let $C\to S$ be a family of curves of genus $g>0$ on a regular and integral quotient stack. Let $G$ be a
reductive group and denote by $\ab:G\to G^{\ab}$ its abelianization. 
Fix $\delta\in \pi_1(G)$ and set $\delta^{\ab}:=\pi_1(\ab)(\delta)\in \pi_1(G^{\ab})$.

 If one of the following two hypothesis is satisfied:
\begin{enumerate}[(a)]
\item \label{Ass-a} the derived subgroup $\scr D(G)$ of $G$ is simply connected;
\item \label{Ass-b} $\End(J_{C_{\ov \eta}}) =\bbZ$ for a geometric generic point $\ov \eta\to S$, the natural map $\Pic(C)\rightarrow \Pic_{C/S}(S)$ is surjective and
 $\RPic^0(C/S)$ is torsion-free;
\end{enumerate}
then the following  diagram, which is functorial in $G$ and $S$,
\begin{equation*}
\xymatrix{
 \Sym^2 \Lambda^*(G^{\ab}) \ar@{^{(}->}[r]^{\Sym^2 \Lambda_{\ab}^*} \ar[d]^{\tau_{G^{\ab}}^{\delta^{\ab}}(C/S)}& \left(\Sym^2 \Lambda^*(T_{G})\right)^{\mathscr W_G} \ar[d]^{\tau_{G}^\delta(C/S)}\\
   \Pic \Bun_{G_{\ab}}^{\delta^{\ab}}(C/S)\ar@{^{(}->}[r]^{\ab_{\sharp}^*} &  \Pic \Bun_G^{\delta}(C/S)
}
\end{equation*}
is a push-out diagram. 

In particular, there exists an exact sequence, functorial in $S$ and in $G$,
$$
0 \to \Pic \Bun_{G_{\ab}}^{\delta^{\ab}}(C/S)\xrightarrow{\ab_{\sharp}^*}   \Pic \Bun_G^{\delta}(C/S)\rightarrow  \Bil^{s,\ev}(\Lambda(T_{\scr D(G)})\vert \Lambda(T_{G^{\ss}}))^{\scr W_G}\to 0,
$$
where $\Bil^{s,\ev}(\Lambda(T_{\scr D(G)})\vert \Lambda(T_{G^{\ss}}))^{\scr W_G}$ is the lattice of $\scr W_G$-invariant symmetric even bilinear form on $\Lambda(T_{\scr D(G)})$ that are integral on $\Lambda(T_{\scr D(G)})\otimes \Lambda(T_{G^{\ss}})$, where $G^{\ss}$ is the semisimplification of $G$.  
\end{theoremalpha}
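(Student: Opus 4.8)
The plan is to reduce Theorem \ref{Main:PicG>0} to the torus case (Theorem \ref{Main:PicT>0}) via the commutative diagram relating the transgression maps $\tau_G^\delta$ and $\tau_{T_G}^d$. Since $\ab_\sharp^*$ is injective (it is a pull-back along a smooth surjection whose fibers are $\Bun_{\scr D(G)}^{\cdot}$, connected), and $\Sym^2\Lambda_{\ab}^*$ is injective with the stated cokernel, a push-out diagram is equivalent to the equality
\begin{equation*}
\Pic\Bun_G^\delta(C/S) = \ab_\sharp^*\bigl(\Pic\Bun_{G^{\ab}}^{\delta^{\ab}}(C/S)\bigr) + \tau_G^\delta\bigl((\Sym^2\Lambda^*(T_G))^{\scr W_G}\bigr),
\end{equation*}
together with the identification of the intersection of the two summands with $\ab_\sharp^*\tau_{G^{\ab}}^{\delta^{\ab}}(\Sym^2\Lambda^*(G^{\ab}))$. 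The key structural input is the injection $\iota_\sharp^*\colon \Pic\Bun_G^\delta(C/S)\hookrightarrow \Pic\Bun_{T_G}^d(C/S)$ coming from restriction to a maximal torus; this should follow from earlier results in the excerpt (it is the vertical arrow in the defining diagram of $\tau_G^\delta$, and the transgression was constructed precisely so that $\iota_\sharp^*$ detects everything). So the whole problem becomes: identify, inside $\Pic\Bun_{T_G}^d(C/S)$, the image of $\iota_\sharp^*$, and show it is generated by $\iota_\sharp^*\ab_\sharp^*(\Pic\Bun_{G^{\ab}}^{\delta^{\ab}})$ and $\tau_{T_G}^d((\Sym^2\Lambda^*(T_G))^{\scr W_G})$.

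First I would set up the commutative square of tori $T_{\scr D(G)}\hookrightarrow T_G \twoheadrightarrow T_{G^{\ab}}=G^{\ab}$, inducing on cocharacter lattices $0\to\Lambda(T_{\scr D(G)})\to\Lambda(T_G)\to\Lambda(G^{\ab})\to 0$ (which has finite cokernel issues controlled by $\pi_1(\scr D(G))$), and the dual sequence on characters. Applying Theorem \ref{Main:PicT>0}\eqref{Main:PicT>01} to $T=T_G$, $T=G^{\ab}$, I get a commutative ladder of the two short exact sequences
\begin{equation*}
0\to \Lambda^*(T)\otimes\RPic(C/S)\xrightarrow{i_T^d} \RPic^{\taut}\Bun_T^d(C/S)\xrightarrow{\gamma_T^d}\Bil^s(\Lambda(T))\to 0,
\end{equation*}
and I would describe $\iota_\sharp^*$ on these via its effect on the three terms: on the $\Bil^s$ quotient it is the restriction of bilinear forms along $\Lambda(T_{\scr D(G)})\hookrightarrow\Lambda(T_G)$ (here I would invoke Lemma \ref{L:coker-sym} and the description of $\scr W_G$-invariants), and on $\Lambda^*\otimes\RPic$ it is induced by restriction of characters $\Lambda^*(T_G)\twoheadrightarrow\Lambda^*(T_{\scr D(G)})$ (noting the source maps $\Lambda^*(G)\hookrightarrow\Lambda^*(T_G)$). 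A diagram chase then pins down the image of $\iota_\sharp^*$ on the quotient as exactly $\Bil^{s,\ev}(\Lambda(T_{\scr D(G)})\vert\Lambda(T_{G^{\ss}}))^{\scr W_G}$: the ``integral on $\Lambda(T_{\scr D(G)})\otimes\Lambda(T_{G^{\ss}})$'' condition is precisely the obstruction for a $\scr W_G$-invariant form on $\Lambda(T_{\scr D(G)})$ to extend to one on $\Lambda(T_G)$, this being where the isogeny $\Lambda(T_{\scr D(G)})\oplus\Lambda(T_{Z(G)^\circ})\to\Lambda(T_G)$ and the semisimplification $G^{\ss}$ enter.

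The two hypotheses \eqref{Ass-a} and \eqref{Ass-b} then serve the same purpose by different routes. Under \eqref{Ass-b}, Theorem \ref{Main:PicT>0}\eqref{Main:PicT>02} gives $\Pic\Bun_T^d = \Pic^{\taut}\Bun_T^d$ for every torus, so the ladder above has no ``extra'' Picard classes and the conclusion is a pure diagram chase; the torsion-freeness of $\RPic^0(C/S)$ is needed to control the $\Lambda^*\otimes\RPic$ level, ensuring that tautological classes on the total space restrict compatibly and that $i_T^d$ stays injective after the base-change manipulations (and is used to reconstruct classes from their restrictions to a maximal torus). Under \eqref{Ass-a}, $\scr D(G)$ simply connected, one instead invokes the known generation of $\Pic\Bun_G^\delta$ for the semisimple simply connected factor — ultimately Faltings' $\Pic\Bun_{G'}(C/S)\cong\bbZ$ for $G'$ almost-simple simply connected, assembled over the almost-simple factors — combined with the torus case for $G^{\ab}$; here the transgression surjects onto the relevant $\bbZ$'s and the simply-connectedness removes the integrality obstruction (it makes $\Lambda(T_{\scr D(G)})$ the full coweight lattice, matching $\Lambda(T_{G^{\ss}})$). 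The main obstacle I anticipate is exactly this last point: proving generation of $\Pic\Bun_G^\delta(C/S)$ over an arbitrary base $S$ in case \eqref{Ass-a} without the torus-case shortcut — one must push Faltings-type determinant-line-bundle constructions through a relative family on a regular integral quotient stack and control the failure of $\Pic(C)\to\Pic_{C/S}(S)$ to be surjective, presumably by a dévissage on $S$ and descent from an atlas, which is where the ``quotient stack'' and regularity hypotheses are spent. Matching the two cases to the single uniform cokernel $\Bil^{s,\ev}(\Lambda(T_{\scr D(G)})\vert\Lambda(T_{G^{\ss}}))^{\scr W_G}$, and checking functoriality in $G$ and $S$ of the resulting push-out, is then bookkeeping built on Lemma \ref{L:coker-sym}.
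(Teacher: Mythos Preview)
Your overall strategy for case \eqref{Ass-b} --- embed $\Pic\Bun_G^\delta(C/S)$ into $\Pic\Bun_{T_G}^d(C/S)$ via $\iota_\sharp^*$ and compare with the tautological description of the latter --- matches the paper's, but you are missing the decisive step. Knowing that the image of $\iota_\sharp^*$ \emph{contains} $\ab_\sharp^*(\Pic\Bun_{G^{\ab}}^{\delta^{\ab}})+\tau_{T_G}^d\bigl((\Sym^2\Lambda^*(T_G))^{\scr W_G}\bigr)$ is formal; showing it contains nothing more is not a diagram chase. The paper's argument is: (i) use the tautological generators (available by hypothesis) to define a $\scr W_G$-action on $\Pic\Bun_{T_G}^d(C/S)$; (ii) identify the push-out with the $\scr W_G$-invariant subgroup; (iii) prove the resulting inclusion $\psi\colon(\Pic\Bun_{T_G}^d)^{\scr W_G}\hookrightarrow\Pic\Bun_G^\delta$ is an isomorphism by showing $\coker\psi$ is both torsion (a rank estimate bounding $\rk\coker(\ab_\sharp^*)$ by the number of almost-simple factors of $G^{\sc}$) and torsion-free (because $\RPic\Bun_{T_G}^d(C/S)$ is torsion-free --- \emph{this} is where the torsion-freeness of $\RPic^0(C/S)$ is used, not where you place it). Your description of $\iota_\sharp^*$ on the $\Bil^s$ level is also off: since $\iota\colon T_G\hookrightarrow G$ does not change the torus, $\iota_\sharp^*$ does not restrict forms to $\Lambda(T_{\scr D(G)})$; that restriction enters only at the end, when computing $\coker(\Sym^2\Lambda_{\ab}^*)$ via Lemma~\ref{L:coker-sym}.

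For case \eqref{Ass-a} the paper takes a route quite different from your Faltings-plus-d\'evissage sketch. It sandwiches $C/S$ between the universal family $\cC_g/\cM_g$ and the geometric generic fibre $C_{\ov\eta}$, for both of which the exact sequence is already known (from \cite{FV2} and \cite{BH10} respectively), with injective restriction maps between the three Picard groups. The key observation is that the surjection of tori $T_G\twoheadrightarrow G^{\ab}$ admits a section, hence so does $\ab\colon G\to G^{\ab}$; this produces a splitting of $\ab_\sharp^*$ which is \emph{functorial in $S$}. The three exact sequences then split compatibly, and the middle one is forced to have the same cokernel as the outer two. No d\'evissage on $S$, and no re-running of Faltings' argument in a relative setting, is needed.
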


For a family of curves $C/S$ of positive genus such that  $\End(J_{C_{\ov \eta}}) =\bbZ$, the map $\Pic(C)\rightarrow \Pic_{C/S}(S)$ is surjective  and satisfying one of the two assumptions of Theorem \ref{Main:PicG>0}, we give two alternative presentations of the relative Picard group of $\Bun_G^{\delta}(C/S)$ in Theorem \ref{T:altpres}.

The case of families of genus zero curves is easier and it is treated in detail in Theorem \ref{T:PicG=0} and Corollary \ref{C:PicG=0}. In particular, we will show that Theorem \ref{Main:PicG>0} holds true for families of genus zero curves under the assumption that $\scr D(G)$ is simply connected (see Theorem \ref{T:PicBunGAss}).

For a semisimple group $G$, the above Theorem gives the following 

\begin{coralpha}\label{MCor:PicG>0}
Let $C\to S$ be a family of curves of genus $g>0$ on a regular and integral quotient stack. Let $G$ be a
semisimple group and fix $\delta\in \pi_1(G)$.

 If one of the following two hypothesis is satisfied:
\begin{enumerate}[(a)]
\item   $G$ is simply connected;
\item  $\End(J_{C_{\ov \eta}}) =\bbZ$ for a geometric generic point $\ov \eta\to S$, the natural map $\Pic(C)\rightarrow \Pic_{C/S}(S)$ is surjective and $\RPic^0(C/S)$ is torsion-free;
\end{enumerate}
then the trasgression map 
\begin{equation*}
\tau_G^{\delta}=\tau^{\delta}_G(C/S):(\Sym^2 \Lambda^*(T_G))^{\mathscr W_G}\xrightarrow{\cong} \Pic \Bun_G^{\delta}(C/S),
\end{equation*}
is an isomorphism, functorial in $S$ and $G$.
\end{coralpha}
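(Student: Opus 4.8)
The plan is to obtain the Corollary as a direct specialisation of Theorem \ref{Main:PicG>0} to the case of a semisimple group, in which the abelianization disappears. First I would check that the two sets of hypotheses match up: for a semisimple $G$ one has $\scr D(G)=G$, so hypothesis (a) of the Corollary (``$G$ simply connected'') is exactly hypothesis \eqref{Ass-a} of Theorem \ref{Main:PicG>0} (``$\scr D(G)$ simply connected''), while hypothesis (b) of the Corollary is verbatim hypothesis \eqref{Ass-b}. Hence Theorem \ref{Main:PicG>0} applies in either case.

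Next I would unwind the data entering that theorem when $G$ is semisimple. Since $G^{\ab}=G/\scr D(G)$ is the trivial group, we get $\Lambda^*(G^{\ab})=0$, hence $\Sym^2\Lambda^*(G^{\ab})=0$; moreover $\Bun_{G^{\ab}}^{\delta^{\ab}}(C/S)=\Bun_{\{1\}}(C/S)\cong S$, so the morphism $\ab_\sharp^*$ becomes the pullback $(\Phi^\delta)^*\colon \Pic S\hookrightarrow \Pic\Bun_G^\delta(C/S)$ and $\RPic\Bun_{G^{\ab}}^{\delta^{\ab}}(C/S)=0$. Finally, the radical of $G$ is trivial, so the semisimplification satisfies $G^{\ss}=\scr D(G)=G$, and the quotient lattice appearing in the exact sequence of Theorem \ref{Main:PicG>0} is $\Bil^{s,\ev}(\Lambda(T_G)\vert \Lambda(T_G))^{\scr W_G}=\Bil^{s,\ev}(\Lambda(T_G))^{\scr W_G}$, which is identified with $(\Sym^2\Lambda^*(T_G))^{\scr W_G}$ under the identification recalled before the statement of the transgression map.

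Then I would conclude. With upper-left corner equal to $0$, the pushout square of Theorem \ref{Main:PicG>0} degenerates: a pushout of $0\hookrightarrow (\Sym^2\Lambda^*(T_G))^{\scr W_G}$ along $0\hookrightarrow \Pic S$ is simply the direct sum, so $(\Phi^\delta)^*$ and $\tau_G^\delta$ together exhibit a splitting $\Pic\Bun_G^\delta(C/S)=(\Phi^\delta)^*(\Pic S)\oplus \tau_G^\delta\big((\Sym^2\Lambda^*(T_G))^{\scr W_G}\big)$, with $\tau_G^\delta$ injective; equivalently, the exact sequence $0\to \Pic S\to \Pic\Bun_G^\delta(C/S)\to (\Sym^2\Lambda^*(T_G))^{\scr W_G}\to 0$ of Theorem \ref{Main:PicG>0} is split by $\tau_G^\delta$. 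Passing to the quotient by $(\Phi^\delta)^*(\Pic S)$, this says precisely that $\tau_G^\delta$ induces an isomorphism $(\Sym^2\Lambda^*(T_G))^{\scr W_G}\xrightarrow{\cong}\RPic\Bun_G^\delta(C/S)$; in particular it is an isomorphism onto $\Pic\Bun_G^\delta(C/S)$ whenever $\Pic S=0$ (for instance $S=\Spec k$), recovering the theorems of Biswas--Hoffmann and of Faltings. Functoriality in $S$ and in $G$ is inherited from that of the diagram in Theorem \ref{Main:PicG>0}.

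Since the statement is a formal consequence of Theorem \ref{Main:PicG>0}, there is no genuine obstacle here; the only points needing a word of care are the identification $\Bun_{\{1\}}(C/S)\cong S$ (which makes the bottom-left corner of the pushout square contribute nothing to the relative Picard group) and keeping track of the distinction between $\Pic$ and $\RPic$ in the final displayed isomorphism.
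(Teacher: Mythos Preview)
Your approach is correct and is exactly how the paper intends the corollary to be read: as an immediate specialisation of Theorem~\ref{Main:PicG>0} to the semisimple case, where $G^{\ab}$ is trivial so that $\Sym^2\Lambda^*(G^{\ab})=0$, $\Bun_{G^{\ab}}^{\delta^{\ab}}(C/S)\cong S$, and the push-out square collapses to a direct sum decomposition. The paper gives no separate proof beyond the sentence ``For a semisimple group $G$, the above Theorem gives the following''.

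You are also right to flag the $\Pic$ versus $\RPic$ issue: what the argument actually yields is that $\tau_G^\delta$ induces an isomorphism onto $\RPic\Bun_G^\delta(C/S)$, not onto $\Pic\Bun_G^\delta(C/S)$ unless $\Pic(S)=0$. The displayed target in the corollary as stated should be read as the relative Picard group.
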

Note that, for $G$ semisimple, we have that $(\Sym^2 \Lambda^*(T_G))^{\mathscr W_G}\cong \bbZ^s$, where $s$ is the number of almost simple factors of the universal cover $G^{\sc}$ of $G$ (see Lemma \ref{L:coker-sym}).
In particular, if $G$ is simply connected and almost simple (i.e. $s=1$), then the above Corollary recovers (in the case where $C/S$ has a section) the isomorphism 
$ \Pic \Bun_G^{\delta}(C/S)\cong \bbZ$ established by Faltings \cite{Fa03}.

The above Theorem \ref{Main:PicG>0} (and hence Corollary \ref{MCor:PicG>0}) was known:
\begin{itemize}
\item for $G$ semisimple and simply connected and a family $C/S$ admitting a section, by Faltings \cite{Fa03};
\item for a curve $C$ over $k=\ov k$ if $\scr D(G)$ is simply connected, by Biswas-Hoffmann \cite{BH10}; 
\item for the universal family $\cC_{g,n}\to \cM_{g,n}$ (which satisfies the assumption \eqref{Ass-b} by Remarks \ref{R:eqRPic} and \ref{R:NSjump}\eqref{R:NSjump2}, and the Franchetta's conjecture)  by the work of the authors \cite[Thm. C(2)]{FV1} and \cite[Cor. 3.4]{FV2}, generalizing the work of the first author for  $G=GL_r$, $g\geq 2$ and $n=0$ in \cite[Thm. A]{Fri18}.
\end{itemize}
Note  that   Theorem \ref{Main:PicG>0} does not (necessarily) hold for a curve $C$ over $k=\ov k$ (which clearly does not satisfies assumption \eqref{Ass-b}) if the group $\scr D(G)$ is not simply connected, as it follows from the work of Biswas-Hoffmann \cite{BH10}.

In the final Section \ref{S:rigid} of the paper, we consider the rigidification 
\begin{equation*}\label{E:rigidINT}
\nu_G^{\delta}:\Bun_{G}^{\delta}(C/S) \to \Bun_{G}^{\delta}(C/S)\fatslash \scr Z(G):=\Bunr_{G}^{\delta}(C/S),
\end{equation*}
of the stack $\Bun_{G}^{\delta}(C/S)$ by the center $\scr Z(G)$ of $G$, which acts functorially on any $G$-bundle.

The Leray spectral sequence for the cohomology of the group $\Gm$ with respect to the $\scr Z(G)$-gerbe $\nu_G^{\delta}$ gives the  exact sequence  
\begin{equation*}
0\to   \Pic(\Bunr_G^{\delta}(C/S))\stackrel{(\nu_G^{\delta})^*}{\rightarrow} \Pic(\Bun_G^{\delta}(C/S)) \xrightarrow{\w_G^{\delta}} \Lambda^*(\scr Z(G)):=\Hom(\scr Z(G),\Gm) 
\end{equation*}
where the weight homomorphism $\w_G^{\delta}$ is given by the restricting the line bundles on $\Bun_G^{\delta}(C/S)$ to the fibers of $\nu_G^{\delta}$ and its cokernel measures the triviality of the $\scr Z(G)$-gerbe $\nu_G^{\delta}$ (see Section \ref{S:rigid} for more details).

In the next Theorem, we describe the Picard group of the rigidification $\Bunr_G^{\delta}(C/S)$ in terms of the Neron-Severi group $\NS(\Bunr_G^{\delta})$ (see Definition \ref{D:evGsc}) and the cokernel of the weight homomorphism $\w_G^{\delta}$ in terms of the cokernel of the evaluation homomorphism  $\ev_{\scr D(G)}^{\delta}$ (see Definition \ref{D:evGsc}).

\begin{theoremalpha}\label{Main:Rig}(see Theorems \ref{T:PicRig>0} and \ref{T:cokerwt})
Let $C\to S$ be a family of curves of genus $g>0$ on a regular and integral quotient stack. Assume that both the following conditions hold:
\begin{enumerate}[(i)]
\item $\End(J_{C_{\ov \eta}}) =\bbZ$ for a geometric generic point $\ov \eta\to S$and  the natural map $\Pic(C)\rightarrow \Pic_{C/S}(S)$ is surjective;
\item either $\RPic^0(C/S)$ is torsion-free or $\scr D(G)$ is simply connected.
\end{enumerate}

\begin{enumerate}
\item  \label{Main:Rig1} We have an exact sequence, functorial in $S$ and in $G$:
\begin{equation*}
0 \to \Lambda^*(G^{\ab})\otimes \RPic^0(C/S)  \xrightarrow{\ov{j_G^{\delta}}} \RPic(\Bunr_G^{\delta}(C/S))\xrightarrow{\ov{\gamma_G^{\delta}}} \NS(\Bunr_G^{\delta}),
\end{equation*}
such that 
\begin{equation*}
 \Im(\ov{\gamma_G^{\delta}})=
  \left\{b\in \NS(\Bunr_G^{\delta})\: : \: 
 \begin{aligned}
 & b(\delta\otimes x)+(g-1)b(x\otimes x)   \text{ is a multiple of } \delta(C/S) \\
&  \text{ for any } x\in \Lambda(T_G)
 \end{aligned}
 \right\}.
  \end{equation*}
  
  \item \label{Main:Rig2} We have an exact sequence, functorial in $S$ and in $G$:
 \begin{equation*}
 0\to \coker(\ov{\gamma_G^{\delta}})\xrightarrow{\partial_G^{\delta}} \Hom\left(\Lambda(G^{\ab}), \frac{\bbZ}{\delta(C/S)\bbZ}\right)\xrightarrow{\wt{\Lambda_{\ab}^*}} \coker(\w_G^{\delta})\xrightarrow{ \wt{\Lambda_{\scr D}^*}}\coker(\ev_{\scr D(G)}^{\delta})\to 0,
  \end{equation*}
 where $\delta(C/S)$ is the minimum  relative degree of a relative ample line bundle on $C/S$ (and zero if such a line bundle does not exist).
\end{enumerate}
\end{theoremalpha}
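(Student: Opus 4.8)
The plan is to deduce Theorem \ref{Main:Rig} from the structure results already established for $\Pic \Bun_G^\delta(C/S)$ (Theorems \ref{Main:PicT>0} and \ref{Main:PicG>0}) together with the gerbe exact sequence recalled just above the statement. The two parts are logically linked: part \eqref{Main:Rig2} is essentially a diagram chase built on top of part \eqref{Main:Rig1}, so the real work is in part \eqref{Main:Rig1}.

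For part \eqref{Main:Rig1}, I would start from the Leray sequence
\begin{equation*}
0\to \Pic(\Bunr_G^{\delta}(C/S))\xrightarrow{(\nu_G^{\delta})^*} \Pic(\Bun_G^{\delta}(C/S)) \xrightarrow{\w_G^{\delta}} \Lambda^*(\scr Z(G)),
\end{equation*}
which identifies $\Pic(\Bunr_G^\delta)$ with $\ker(\w_G^\delta)$, and correspondingly $\RPic(\Bunr_G^\delta)$ with $\ker(\w_G^\delta)$ modulo $(\Phi^\delta)^*\Pic S$ (one must check the weight homomorphism kills pull-backs from $S$, which is immediate since such bundles are trivial on the gerbe fibers). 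Under hypotheses (i)--(ii), Theorem \ref{Main:PicG>0} gives the push-out square expressing $\Pic\Bun_G^\delta$ as $\Pic\Bun_{G^{\ab}}^{\delta^{\ab}}$ glued with $(\Sym^2\Lambda^*(T_G))^{\scr W_G}$ along $\Sym^2\Lambda^*(G^{\ab})$, and Theorem \ref{Main:PicT>0}\eqref{Main:PicT>01} describes $\RPic\Bun_{G^{\ab}}^{\delta^{\ab}}$ via the extension of $\Bil^s(\Lambda(G^{\ab}))$ by $\Lambda^*(G^{\ab})\otimes\RPic(C/S)$. Combining these, I get a presentation of $\RPic\Bun_G^\delta$; then I intersect with $\ker(\w_G^\delta)$. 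The weight homomorphism restricted to the tautological/transgression generators is computable explicitly: on a Deligne pairing $\langle \cL_\chi, M\rangle$ or determinant of cohomology $d_{\wh\pi}(\cL_\chi(M))$ the weight is the restriction of $\chi$ to $\scr Z(G)$ paired against degrees, and on a transgression class $\tau_G^\delta(b)$ the weight along the central direction is governed by $b(\delta\otimes -)+(g-1)b(-\otimes-)$ evaluated on $\Lambda(\scr Z(G))\subseteq\Lambda(T_G)$. This is exactly where the condition ``$b(\delta\otimes x)+(g-1)b(x\otimes x)$ is a multiple of $\delta(C/S)$'' emerges: the weight lands in $\Lambda^*(\scr Z(G))$, and being in the kernel (after accounting for which central weights are realized, i.e. the image of $\Lambda^*(G)\to\Lambda^*(\scr Z(G))$ whose cokernel is pinned down by $\delta(C/S)$) translates into that divisibility. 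The $\Lambda^*(G^{\ab})\otimes\RPic^0(C/S)$ piece (rather than $\RPic(C/S)$) appears because passing to the rigidification trades the ``$\chi\otimes\chi$'' part of the relative Néron--Severi contribution, which involves all of $\RPic(C/S)$, for its connected part; here the torsion-freeness in (ii) (when $\scr D(G)$ is not simply connected) is what lets the argument go through without spurious torsion obstructions. I would define $\NS(\Bunr_G^\delta)$ and $\ov{\gamma_G^\delta}$, $\ov{j_G^\delta}$ as the induced maps and check exactness termwise.

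For part \eqref{Main:Rig2}, I would assemble a commutative diagram with exact rows relating the gerbe sequence for $\Bun_G^\delta$, the analogous sequence, and the various evaluation/weight homomorphisms, and extract the four-term exact sequence by the snake lemma. Concretely: $\coker(\ov{\gamma_G^\delta})$ measures the failure of $\RPic(\Bunr_G^\delta)$ to surject onto $\NS(\Bunr_G^\delta)$; the obstruction to lifting a Néron--Severi class is precisely a class in $\Hom(\Lambda(G^{\ab}),\bbZ/\delta(C/S)\bbZ)$, coming from the cokernel of $\Lambda^*(G)\to\Lambda^*(\scr Z(G))$ composed with the abelianization (this is where $\delta(C/S)$ enters, as the obstruction to finding a line bundle on $C/S$ of the right degree); the map $\wt{\Lambda_{\ab}^*}$ is induced by $\Lambda_{\ab}:\Lambda(T_G)\twoheadrightarrow\Lambda(G^{\ab})$ dualized, and $\wt{\Lambda_{\scr D}^*}$ by the inclusion $\Lambda(T_{\scr D(G)})\hookrightarrow\Lambda(T_G)$; and exactness at $\coker(\w_G^\delta)$ and surjectivity onto $\coker(\ev_{\scr D(G)}^\delta)$ follow from comparing the transgression description on $G$, $G^{\ab}$, and $\scr D(G)$ via the isogeny $\scr D(G)\times \scr Z(G)^\circ\to G$. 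The functoriality in $S$ and $G$ is inherited from the functoriality of all the ingredients (Theorems \ref{Main:PicT>0}, \ref{Main:PicG>0}, the transgression map, and the gerbe construction).

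The main obstacle I anticipate is the careful bookkeeping of lattices and the precise form of the divisibility condition in $\Im(\ov{\gamma_G^\delta})$: one must correctly identify, inside $\NS(\Bunr_G^\delta)$, which bilinear forms $b$ lift to genuine line bundles on the rigidification, and this requires knowing exactly the image of the weight homomorphism $\w_G^\delta$ as a subgroup of $\Lambda^*(\scr Z(G))$ — equivalently, the cokernel of the natural map $\Lambda^*(G)\to\Lambda^*(\scr Z(G))$ twisted by the data of $\delta$ and the available degrees of line bundles on $C/S$ (whence $\delta(C/S)$). Getting the quantifier ``for any $x\in\Lambda(T_G)$'' and the role of $\delta$ and $(g-1)$ exactly right — so that it matches the known computation of $\w_G^\delta$ on transgression classes from \cite{FV1} — is the delicate point; the rest is a (lengthy but routine) series of diagram chases and snake-lemma applications.
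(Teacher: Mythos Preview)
Your approach is essentially the same as the paper's. The paper first packages the combination of Theorems~\ref{Main:PicT>0} and~\ref{Main:PicG>0} into an intermediate result (Theorem~\ref{T:altpres}), which presents $\RPic\Bun_G^\delta(C/S)$ via the map $\omega_G^\delta\oplus\gamma_G^\delta$ to $\NS(\Bun_G^\delta)$ with kernel $\Lambda^*(G^{\ab})\otimes\RPic^0(C/S)$ and an explicit image; then both parts of Theorem~\ref{Main:Rig} are deduced from that by restricting to $\ker(\w_G^\delta)$ and applying the snake lemma, exactly as you outline (the paper defers the details to \cite[Thm.~5.5, Thm.~5.7]{FV2}).

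One small correction to your description: the weight of a transgression class $\tau_G^\delta(b)$ is simply $b(\delta\otimes -)\in\Lambda^*(\scr Z(G))$, \emph{without} the $(g-1)b(-\otimes -)$ term. The $(g-1)$ contribution enters instead through the determinant-of-cohomology generators, via the formula $\w_T^d(d_{\wh\pi}(\cL_\chi(M)))=[\chi(d)+\deg_\pi(M)+1-g]\chi$; this is what produces the image description in Theorem~\ref{T:PicBunT>0}\eqref{T:PicBunT>02}, from which the divisibility condition on $\Im(\ov{\gamma_G^\delta})$ is then inherited. (Note also that $\delta(C/S)\mid 2g-2$, so the sign on $(g-1)$ is immaterial modulo $\delta(C/S)$.) This does not affect the validity of your overall plan.
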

The Neron-Severi group $\NS(\Bunr_G^{\delta})$ and the evaluation homomorphism  $\ev_{\scr D(G)}^{\delta}$ are defined in Definition \ref{D:evGsc}.
We make explicit Theorem \ref{Main:Rig}\eqref{Main:Rig2} in the special cases $\delta(C/S)=1$ or $G=T$ a torus in Corollary \ref{C:PicRig>0}. 
 
The case of families of genus zero curves is easier and it is treated  in Theorem \ref{T:PicRig0}.

Theorem \ref{Main:Rig}\eqref{Main:Rig1}  was known:
\begin{itemize}
\item for the universal family $\cC_{g,n}\to \cM_{g,n}$ (which satisfies the assumptions  by Remarks \ref{R:eqRPic} and \ref{R:NSjump}\eqref{R:NSjump2}, and the Franchetta's conjecture)
 by the work of the authors \cite[Thm. 1.3]{FV2}, generalizing  \cite[Thm. B(i) and 1.5]{MV} (see also \cite{Kou91}) for $G=\Gm$, $n=0$ and  \cite[Thm. B(i) and Thm. A.2]{Fri18} (see also \cite{Kou93}) if $G=\GL_r$ and $n=0$. 
\end{itemize}

Theorem \ref{Main:Rig}\eqref{Main:Rig2}  was known:
\begin{itemize}
\item for a curve $C$ over $k=\ov k$ if $\scr D(G)$ is simply connected, by Biswas-Hoffmann \cite{BH12} (where also the case of an  arbitrary reductive group is treated);
\item for $G=\Gm$ and an arbitrary family $C/S$ in \cite{MR85};
\item for the universal family $\cC_{g,n}\to \cM_{g,n}$ by the work of the authors \cite[Thm. 1.4]{FV2}, generalizing \cite[Thm. 6.4]{MV} for $G=\Gm$, $n=0$, \cite[Cor. 3.3.2(i)]{Fri18}  for $G=\GL_r$, $n=0$,  \cite[Thm. B(i)]{FPbr} for $G=\GL_r$.
\end{itemize}

The computation of the cokernel of the weight homomorphism $\w_G^{\delta}$ (which coincides with the kernel of the obstruction homomorphism, see \eqref{E:seqLeray}) carried over in Theorem \ref{Main:Rig}\eqref{Main:Rig2}, will be used in our upcoming work \cite{FV4}, where we will compute the (cohomological) Brauer groups of $\Bun_{G}^{\delta}(C/S)$ and $\Bunr_{G}^{\delta}(C/S)$ for certain reductive groups, extending the work of Biswas-Holla \cite{BHol} for a fixed curve and a complex semisimple group, and the work of Fringuelli-Pirisi 
 \cite{FPbr} for the universal family $\cC_{g,n}/\cM_{g,n}$ and $G=\GL_r$.

\subsection*{Notations}

\begin{notations}\label{N:qs}
We denote by $k=\ov k$ an algebraically closed field of arbitrary characteristic $p\geq 0$. An algebraic stack will always be assumed locally of finite type over $k$ and quasi-separated, i.e. with quasi-compact and separated diagonal. In particular, all the schemes and algebraic spaces must be quasi-separated. Furthermore, any algebraic space is generically a scheme (see \cite[\href{https://stacks.math.columbia.edu/tag/06NH}{Proposition 06NH}]{stacks-project})
\end{notations}

\section{Preliminaries}\label{S:prel}

In this section, we collect some definitions and preliminaries results that we will need throughout the paper.

\subsection{Family of curves}\label{s:famcurves}

A \emph{curve} is a connected, smooth and projective scheme of dimension one over an algebraically closed field.  The genus of a curve $C$ is $g(C):=\dim H^0(C,\omega_C)$.

\vspace{0.1cm}

A \emph{family of curves} $\pi: C\to S$ is a proper, flat and representable morphism of algebraic stacks whose geometric fibers are curves. If all the geometric fibers of $\pi$ have the same genus $g$, then we say that $\pi:C\to S$ is a family of curves of genus $g$ (or a family of curves with relative genus $g$) and we set $g(C/S):=g$. Note that any family of curves $\pi:C\to S$ with $S$ connected is a family of genus $g$ curves for some $g\geq 0$.
For technical reasons, we will often restrict to families of curves  $\pi: C\to S$ over a regular and integral quotient stack $S$ over $k$. 

\vspace{0.1cm}

Given $g,n\geq 0$, we denote by $\cM_{g,n}$ the algebraic stack parametrizing families of curves of genus $g$ with $n$ ordered and pairwise disjoint sections, and by $\cC_{g,n}\to \cM_{g,n}$ the \emph{universal family of $n$-marked curves of genus $g$}. We omit the $n$ from the notation when it is zero. Note that $\cM_{g,n}$ is a DM (=Deligne-Mumford) stack if and only if $2g-2+n>0$.  

\vspace{0.1cm}

A family of curves $\pi: C\to S$ is \emph{locally projective} if, Zariski-locally on the base, it is a closed subscheme of $\bbP^n_S$ for some $n\geq 1$. Equivalently, $\pi:C\to S$ is locally projective if there exists 
a $\pi$-relatively ample line bundle on $C$, or equivalently a line bundle on $C$ having $\pi$-relative positive degree.
Note that a family of curves $\pi:C\to S$ is locally projective if either $g(C/S)\neq 1$  (because the relative dualizing line bundle $\omega_{\pi}$ has relative degree $2g(C/S)-2$)  or $\pi$ has a section $\sigma:S\to C$.
However, there are examples of families of genus one curves without sections that are not locally projective, e.g. the universal family $\cC_{1}\to \cM_{1}$  of curves of genus one (see Remark \ref{R:deltaCS}\eqref{R:deltaCS2}, and also \cite[XIII, 3.2]{Ray} and \cite{Zom} for other examples).
 
 \vspace{0.1cm}

In the results of this paper, an important role is played by the following invariant of the family $C/S$:

\begin{defin}\label{D:deltaCS}
For a family of curves $C/S$, we define 
$$
\delta(C/S):=\begin{cases}
	\min\{\deg_{\pi}(M)> 0\: : M\in \Pic(C) \}, &\text{ if } \pi \text{ is locally projective;} \\\
	0, &\text{ otherwise.}
\end{cases}
$$	
Equivalently,  if $\pi$ is locally projective, then $\delta(C/S)$ is the non-negative generator of the  non-zero image of the  relative degree map
\begin{equation}\label{E:rel-deg}
\begin{aligned}
\deg_{\pi}:\Pic(C)& \longrightarrow \bbZ,\\
M & \mapsto \deg_{\pi}(M).
\end{aligned}
\end{equation}
\end{defin}

\begin{rmk}\label{R:deltaCS}
\noindent 
\begin{enumerate}
\item \label{R:deltaCS1} Since the relative dualizing line bundle $\omega_{\pi}$ of $\pi:C\to S$ has relative degree equal to $2g(C/S)-2$, then 
$$
\delta(C/S) \: \text{ divides }\:  2g(C/S)-2.
$$
\item \label{R:deltaCS2} It follows from the weak Franchetta conjecture (see \cite{AC87}, \cite{Sch03}, \cite{FV3}) that, for the universal family $\cC_{g,n} \to \cM_{g,n}$ of $n$-marked curves of genus $g$, we have that 
$$
\delta(\cC_{g,n}/\cM_{g,n})=
\begin{cases}
|2g-2| & \text{ if }n=0,\\
1 & \text{ if } n>0. 
\end{cases}
$$
Notice that $\delta(\cC_1/\cM_1)=0$, which implies that  $\cC_1\to \cM_1$ is not locally projective. 
\item \label{R:deltaCS3}  The invariant $\delta(C/S)$ has been computed for many interesting families of curves:

\begin{itemize} 
\item If $C/S$ is the universal family of (smooth) complete intersections curves in $\bbP^r$ of type $(d_1,\ldots, d_{r-1})$, then $\delta(C/S)=d_1\ldots d_{r-1}$ (see \cite[Thm. 4.4]{Cil86}).
In particular, if  $C/S$ is the universal family of (smooth) plane curves of degree $d$, then $\delta(C/S)=d$ (see \cite[Thm. 4.2]{Cil86}).
\item If $C/S$ is the universal family of hyperplanes sections of the universal family of polarized genus $g\geq 3$ K3 surfaces, then $\delta(C/S)=2g-2$  (see \cite[Prop. 4.5]{Cil86}).
\item If $C/S$ is the universal family of one of the following 
\begin{itemize}
\item the unique component with general moduli of the Hilbert scheme of curves in $\bbP^r$ with $r\geq 3$ of genus $g$ and degree $d$;
\item the Severi variety of curves of genus $g$ together with a birational embedding as a nodal plane curve of degree $d$;
\item the Hurwtiz scheme of degree $d$ covers of $\bbP^1$ of genus $g$;
\end{itemize}
 and we assume that $\rho:=g-(r+1)(g+r-d)\geq 2$, then $\delta(C/S)=\gcd\{2g-2,d\}$ (see \cite[Thm. 1.1]{Cil87} and \cite[Thm. 5.3]{Cil86}). 
\item If $C/S$ is the universal family of hyperelliptic genus $g$ curves then (see \cite[Thm. 3.5]{MR85})
$$
\delta(C/S)=
\begin{cases}
4 & \text{ if }g \: \text{ is odd, }\\
2 & \: \text{ if }g \: \text{ is even.}
\end{cases}
$$
\end{itemize}
\item \label{R:deltaCS4} If $C/S$ is a family of curves of genus $g=0$, then 
$$
\delta(C/S)=
\begin{cases} 
1 & \text{ if }C/S \text{ is Zariski-locally trivial, }\\
2 & \text{ if }C/S \text{ is not Zariski-locally trivial.}\\
\end{cases}
$$ 
Note also that $C/S$ is Zariski locally trivial if and only if $C/S$ admits a section. 
\end{enumerate}
\end{rmk}

We will denote by $\Pic_{C/S}$ the \emph{relative Picard functor} of the family $C\to S$, which is an algebraic stack representable and locally of finite presentation over $S$
(by Artin's representability theorem, see e.g. \cite[\S 8.3]{BLR}). We have an exact sequence (see \cite[\S 8.1, Prop. 4]{BLR})
$$
0\to \Pic(S)\xrightarrow{\pi^*} \Pic(C) \to \Pic_{C/S}(S)\to \Br(S)\xrightarrow{\pi^*} \Br(C),
$$
where $\Br(-)=H^2(-,\Gm)$ denote the cohomological Brauer group.  In particular, we get an injective homomorphism
\begin{equation}\label{E:RPic-stack}
\RPic(C/S):=\frac{\Pic(C)}{\Pic(S)} \hookrightarrow \Pic_{C/S}(S).
\end{equation}
Several of our results are restricted to families such that the inclusion \eqref{E:RPic-stack} is an equality, or equivalently such that the natural map $\Pic(C)\to \Pic_{C/S}(S)$ is surjective. 
In the next Remark, we describe several families where this assumption holds true. 

\begin{rmk}\label{R:eqRPic}
The inclusion \eqref{E:RPic-stack} is  an equality in the following cases:
\begin{enumerate}
\item if $\pi:C\to S$ admits a section (because $\pi^*:\Br(S)\to \Br(C)$ is injective); in particular, if $C/S=\cC_{g,n}/\cM_{g,n}$ with $n>0$. Therefore, equality in \eqref{E:RPic-stack} can always be achieved after an \'etale cover of the base $S$. 
\item if $\Br(S)=0$, e.g. if $S$ is the spectrum of a field $k=\ov k$, or the spectrum of strictly henselian valuation ring, or a smooth curve over $k=\ov k$. 
\item  if $C/S=\cC_{g}/\cM_{g}$ and $g\geq 2$ by the strong Franchetta's conjecture (see \cite{Mes}).
\item if the relative Jacobian $J^d_{C/S}:=\Pic^d_{C/S}\to S$ in degree $d$ admits a Poincar\'e line bundle for every $d\in\mathbb Z$.
\item if $\delta(C/S)=1$ (see \cite[Lemma 2.2]{MR85}).
\end{enumerate}
\end{rmk}



Several of our results are restricted to families of curves $\pi: C\to S$ over an integral base  $S$, that satisfy the following condition 
\begin{equation}\label{E:End}
\End(J_{C_{\ov \eta}})=\bbZ,
 \end{equation}
where $\ov \eta$ is a geometric generic point of $S$. 

In the next result, we collect some sufficient and necessary conditions for \eqref{E:End} to hold.

\begin{prop}\label{P:EndZ}
Let $\pi: C\to S$ a family of curves over an integral base $S$ (locally of finite type over $k=\ov k$) and let $\ov \eta\to S$ be a geometric generic point. 
\begin{enumerate}
\item \label{P:EndZ1} If $\End(J_{C_{s}})=\bbZ$ for some $s\in S(k)$ then $\End(J_{C_{\ov \eta}})=\bbZ$.
\item \label{P:EndZ2} If $k$ is uncountable and $\End(J_{C_{\ov \eta}})=\bbZ$, then  $\End(J_{C_{s}})=\bbZ$ for $s$ very general in $S(k)$.
\item \label{P:EndZ3} If $k\not\subseteq  \ov{\mathbb{F}_p}$ then $\End(J_{C_{\ov \eta}})=\bbZ$ if and only if there exists $s\in S(\ov k)$ such that $\End(J_{C_{s}})=\bbZ$. 
\end{enumerate}
\end{prop}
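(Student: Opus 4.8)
The plan is to reduce everything to the spreading-out / specialization behaviour of the relative Jacobian $J_{C/S}\to S$, which is an abelian scheme over a (possibly small) open $U\subseteq S$ once we allow ourselves to replace $S$ by its generic point's neighbourhood; the endomorphism ring is a deformation-invariant-up-to-injection object, and the three statements are the standard consequences of this. For part \eqref{P:EndZ1}, I would use the fact that for a family of abelian varieties $A\to S$ over an integral base, specialisation induces an \emph{injective} ring homomorphism $\End(A_{\ov\eta})\hookrightarrow \End(A_s)$ for every $s\in S(k)$ (this is classical: an endomorphism of the generic fibre spreads out over an open neighbourhood of $\eta$, and since $S$ is integral that open neighbourhood contains points specialising to... — more precisely one uses that $\End$ of the generic fibre injects into $\End$ of any fibre, because the reduction of a nonzero homomorphism of abelian schemes is nonzero on the special fibre as it is on an open dense part of the total space). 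Hence $\bbZ\subseteq \End(J_{C_{\ov\eta}})\hookrightarrow \End(J_{C_s})=\bbZ$ forces $\End(J_{C_{\ov\eta}})=\bbZ$. Since everything is already $\bbZ$ at both ends, injectivity of the specialisation map does the job; the subtlety of needing $s$ to be a specialisation of $\eta$ disappears because $S$ is integral of finite type over $k=\ov k$, so every closed point is a specialisation of the generic point after passing to a suitable chain, and in any case the injection $\End(J_{C_{\ov\eta}})\hookrightarrow\End(J_{C_s})$ holds for all closed $s$.

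For part \eqref{P:EndZ2}, I would argue by a countable-union argument. Over an open $U\subseteq S$ the Jacobian is an abelian scheme $\cA\to U$; for each (isomorphism class of) order $\cO$ strictly containing $\bbZ$ in some number field or quaternion algebra that could conceivably occur, the locus in $U(k)$ where $\End(\cA_s)\supseteq \cO$ is contained in a proper closed subset $Z_\cO\subsetneq U$ (properness because the generic fibre has $\End=\bbZ$, so the ``extra'' endomorphism does not spread out). There are only countably many such data $\cO$ (up to the relevant equivalence) and, $k$ being uncountable, $U(k)\setminus\bigcup_\cO Z_\cO(k)$ is nonempty — in fact this is precisely the set of very general points; for $s$ in this set $\End(J_{C_s})$ contains no order strictly larger than $\bbZ$, hence equals $\bbZ$. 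I would phrase ``very general'' as: outside a countable union of proper closed subsets.

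For part \eqref{P:EndZ3}, one direction ($\Leftarrow$) is immediate from part \eqref{P:EndZ1}. For ($\Rightarrow$), when $k\not\subseteq\ov{\bbF_p}$ the field $k$ has infinite transcendence degree issues aside — the point is that $k$ is not contained in the algebraic closure of a finite field, so $S$ has a closed point $s$ whose residue field still has ``enough room''; more robustly, one spreads the whole family out over a finitely generated subring, invokes the Hilbert irreducibility / Noether–Lefschetz type statement that the generic Picard/endomorphism behaviour is attained at infinitely many closed points of a variety over an infinite field that is not $\ov{\bbF_p}$, and transports it back. Concretely: choose a finitely generated $\bbZ$-subalgebra $R\subseteq k$ and a model $\cC_R\to S_R$; the locus in $S_R$ where $\End(J)=\bbZ$ is open and contains the generic point of $S_R$; since $k\not\subseteq\ov{\bbF_p}$, $S_R(k)$ meets this open locus (this is where the hypothesis is used — over $\ov{\bbF_p}$ all abelian varieties have extra endomorphisms, so the statement genuinely fails there), giving the desired $s\in S(\ov k)$.

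The main obstacle I expect is part \eqref{P:EndZ2}: making rigorous that the ``jump loci'' $\{s: \End(J_{C_s})\supsetneq \bbZ\}$ are contained in a \emph{countable} union of \emph{proper closed} subsets. The countability is the delicate half — one must bound which orders $\cO$ can occur (using e.g. a bound on discriminants coming from the polarisation degree, or the fact that a given abelian variety of dimension $g$ has $\End$ of $\bbZ$-rank at most $(2g)^2$ with controlled discriminant), so that only countably many Hilbert-scheme-of-endomorphisms components are involved; then each such component, not dominating $S$ by the generic hypothesis, has proper closed image. For parts \eqref{P:EndZ1} and \eqref{P:EndZ3} the essential input is the well-known injectivity of the specialisation homomorphism on endomorphism algebras of abelian schemes over integral bases, together with, in \eqref{P:EndZ3}, the failure-over-$\ov{\bbF_p}$ phenomenon which is exactly why the hypothesis $k\not\subseteq\ov{\bbF_p}$ is imposed; these should be short.
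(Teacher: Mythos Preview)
Your treatment of parts \eqref{P:EndZ1} and \eqref{P:EndZ2} is correct and matches the paper's approach exactly: both rely on the specialisation injection $\End(J_{C_{\ov\eta}})\hookrightarrow\End(J_{C_s})$, and on the fact that the jumping locus (where this inclusion is strict) is the set of $k$-points of a countable union of proper closed substacks of $S$. The paper simply cites this last fact from the proof of \cite[Prop.~1.13]{MP}, which supplies precisely the countability and closedness you identify as the main obstacle.

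Part \eqref{P:EndZ3} has a genuine gap. Your concrete proposal asserts that ``the locus in $S_R$ where $\End(J)=\bbZ$ is open'', and this is false. The jumping locus is a \emph{countable} union of proper closed subsets, not a finite one, so its complement is not open; indeed, for the universal family of elliptic curves the CM locus is a countable dense subset of $\cM_{1,1}(\bbC)$. Spreading out over a finitely generated $\bbZ$-algebra does not help: over each such base the non-jumping locus is still only a countable intersection of opens, and over a countable field such as $\ov{\bbQ}$ there is no a priori reason for this intersection to contain any closed point at all. Your earlier gesture toward ``Hilbert irreducibility / Noether--Lefschetz'' is closer in spirit, but does not constitute an argument.

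The paper treats ($\Rightarrow$) of part \eqref{P:EndZ3} quite differently: it invokes, as a black box, the non-trivial theorem (due to \cite{MP}, \cite{Chr}, \cite{Amb}) that $S(k)_{\mathrm{jump}}\neq S(k)$ whenever $k\neq\ov{\bbF_p}$. This is a substantial result of arithmetic nature --- roughly, one must show that the countably many proper closed substacks making up the jumping locus cannot together exhaust $S(k)$, which for countable $k$ requires genuine input (heights, $p$-adic methods, or monodromy arguments depending on the reference). You should replace your spreading-out sketch with a citation of one of these results; there is no elementary route.
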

Note that part \eqref{P:EndZ3} is false if $k=\ov k=\ov{\mathbb{F}_p}$, see \cite[Rmk. 1.12]{MP}.
\begin{proof}
Parts \eqref{P:EndZ1} and \eqref{P:EndZ2} follow from the well-know fact  (see the proof of \cite[Prop. 1.13]{MP}) that the jumping locus 
$$S(k)_{\rm{jump}}=\{s\in S(k)\: :\: \End(J_{C_{\ov \eta}})\hookrightarrow \End(J_{C_s}) \text{ is an isomorphism}\}$$
is the set of $k$-points of a union of countably many proper substacks of $S$.

Part \eqref{P:EndZ3} follows from the non-trivial result (proved in \cite{MP}, \cite{Chr}, \cite{Amb}) that $S(k)_{\rm{jump}}\neq S(k)$ if $k\neq \ov{\mathbb{F}_p}$.
\end{proof}

\begin{rmk}\label{R:NSjump}
\noindent 
\begin{enumerate} 
\item \label{R:NSjump2} The condition $\End(J_{C_{\ov \eta}})=\bbZ$ is known to hold for several general families of curves, e.g.:
\begin{itemize}
\item the  universal family $\cC_{g,n}\to \cM_{g,n}$ over an arbitrary field $k=\ov k$ by  \cite{Koi};
\item the universal family of hyperelliptic curves over an arbitrary field $k=\ov k$ by \cite[Thm. 6.5]{Mo76};
\item the  family of smooth curves belonging to a linear system on a regular smooth projective surface if $k=\bbC$ by \cite{Zar}.

\end{itemize}

\item \label{R:NSjump1}  The condition $\End(J_{C_{\ov \eta}})=\bbZ$ implies that the N\'eron-Severi of $J_{C_{\ov \eta}}$ is equal to 
\begin{equation}\label{E:rho=1}
\NS(J_{C_{\ov \eta}})=\bbZ\cdot [\Theta_{C_{\ov \eta}}],
\end{equation}
where $[\Theta_C]$ is the class of a theta divisor $J_{C_{\ov \eta}}$ (using the fact that $\NS(J_{C_{\ov \eta}})$ is the subgroup of $\End(J_{C_{\ov \eta}})$ consisting of elements that are invariant under the Rosari involution). But the viceversa is false, e.g. for an elliptic curve with complex multiplication. 

 Pirola has shown in \cite{Piro} that condition \eqref{E:rho=1} holds true for any family $C\to S$ of genus $g$ curves over $k=\bbC$ such that the image of $S$ in $\cM_g$ via the modular map has codimension less than or equal to $g-2$  (for example, the universal families of $k$-gonal curves of genus $g\geq 2$, for any $k\geq 2$). It would be interesting to known if for such families also the stronger statement $\End(J_{C_{\ov \eta}})=\bbZ$ holds true. 

\end{enumerate}
\end{rmk}

\subsection{Reductive groups}\label{s:redgrp}

A \emph{reductive group} (over $k=\ov k$) is a smooth and connected linear algebraic group over $k$ (i.e. a closed algebraic subgroup of $(\GL_n)_k$, or equivalently an affine group scheme of finite type over $k$), which does not contain non-trivial connected normal unipotent algebraic subgroups. To any reductive group $G$, we can associate a cross-like diagram of reductive groups 
\begin{equation}\label{E:cross}
\xymatrix{
& \scr D(G) \ar@{^{(}->}[d]^{\scr D} \ar@{->>}[dr] & \\
\scr R(G) \ar@{^{(}->}[r]_{\scr R}  \ar@{->>}[dr] & G \ar@{->>}[d]^{\ab}  \ar@{->>}[r]_{\ss}  & G^{\rm{ss}} \\
& G^{\ab} & \\
}
\end{equation}
where 
	\begin{itemize}
		\item   $\mathscr D(G):=[G,G]$ is the derived subgroup of $G$;
		\item   $G^{\mathrm{ab}}:=G/\mathscr D(G)$ is called the abelianization of $G$;
		\item   $\mathscr R(G)$ is the radical subgroup of $G$, which is equal (since $G$ is reductive) to the connected component  $\scr Z(G)^o$ of the center $\scr Z(G)$;
		\item   $G^{\rm{ss}}:=G/\mathscr R(G)$ is called the semisimplification of $G$.
	\end{itemize}		
In the above diagram, the horizontal and vertical lines are short exact sequences of reductive groups, the morphisms $\scr D(G)\twoheadrightarrow G^{\ss}$ and $\scr R(G)\twoheadrightarrow G^{\ab}$ are central isogenies of, respectively, semisimple groups and tori with the same kernel.
Since the two semisimple groups $\scr D(G)$ and $G^{\ss}$ are isogenous, they share the same simply connected cover, that we will denote by $G^{\sc}$, and the same adjoint quotient, that we will denote by $G^{\ad}$. 

\vspace{0.1cm}

Given a reductive group $G$ over $k$, we will denote by $T_G$ a maximal torus of $G$ and by $B_G$ the positive Borel subgroup of $G$ containing $T_G$. 
As usual, we denote by $\scr W_G:=N(T_G)/T_G$ its Weyl group. 

\vspace{0.1cm}

Given a torus $T$, we will denote by $\Lambda(T):=\Hom(\Gm,T)$ its cocharacter lattice and by  $\Lambda^*(T):=\Hom(T, \Gm)$ its character lattice. Note that these lattices are dual to each other via the bilinear map
$$
\begin{aligned}
\Lambda^*(T)\times \Lambda(T) & \longrightarrow \Hom(\Gm,\Gm)=\bbZ,\\
(\chi,d)& \mapsto \chi\circ d:=\chi(d). 
\end{aligned}
$$


\vspace{0.1cm}

The \emph{fundamental group} of $G$ is $\displaystyle \pi_1(G):=\frac{\Lambda(T_G)}{\Lambda_{\coroo}},$
where  $\Lambda_{\coroo}$ is the sublattice of $\Lambda(T_G)$ which is generated by integral linear combinations of coroots. The vertical exact sequence in \eqref{E:cross} induces 
an exact sequence of finitely generated abelian groups  
\begin{equation}\label{E:seq-pi1}
0\to \pi_1(\scr D(G))\to \pi_1(G) \xrightarrow{\pi_1(\ab)}\pi_1(G^{\ab})\to 0,
\end{equation}
which identifies $ \pi_1(\scr D(G))$ with the torsion subgroup of $\pi_1(G)$ and $\pi_1(G^{\ab})$ with the torsion-free quotient of $\pi_1(G)$. 

\vspace{0.1cm}




\subsection{The stack of $G$-bundles on a family of curves}\label{s:BunG}

In this subsection, $G$ will be a  connected and smooth linear algebraic group over $k=\ov k$.
Further restrictions on $G$, like  reductiveness, will be specified when needed.

For any family of curves $\pi: C\to S$, we denote by $\Bun_{G}(C/S)$ the  \emph{moduli stack of $G$-bundles on $C\to S$}, i.e. the algebraic stack over $S$ whose fiber over $V\to S$ is the groupoid of $G$-bundles $E\to C_V:=C\times_S V$. By definition, we have  a forgetful surjective morphism 
\begin{equation}\label{E:PhiG-rel}
\Phi_G=\Phi_{G}(C/S):\Bun_G(C/S) \longrightarrow S.
\end{equation}
The stack $\Bun_G(C/S)$ comes equipped with a universal $G$-torsor $\cP$ on the universal family $\wh \pi: \Bun_G(C/S)\times_S C\to \Bun_G(C/S)$. 

The stack $\Bun_G(C/S)$ is functorial in $S$, i.e. for any $V\to S$ we have that 
$$
\Bun_G(C_V/V)=\Bun_G(C/S)\times_S V.
$$
In particular, the fiber of $\Phi_G(C/S):\Phi_G(C/S)\to S$ over a $k$-point $s\in S$ is the moduli stack $\Bun_G(C_s)$ of $G$-bundles on the curve $C_s$. 

In the special case where the family of curves is the universal family $\cC_{g,n}\to \cM_{g,n}$ over the moduli stack of $n$-pointed curves of genus $g$, we set
$$
\Bun_{G,g,n}:=\Bun_G(\cC_{g,n}/\cM_{g,n}). 
$$
In particular, if the family $C\to S$ has constant relative genus $g=g(C/S)$ then we have that 
\begin{equation*}\label{E:reluniv2}
\Bun_{G}(C/S)=S\times_{\cM_{g}}\Bun_{G,g},
\end{equation*}
with respect to the modular morphism $S\to \cM_g$ associated to the family $C\to S$.


Any morphism of  connected and smooth linear algebraic groups $\phi:G\to H$ determines a morphism of stacks over $S$
\begin{equation}\label{E:fun1}
\begin{array}{lccc}
\phi_\sharp=\phi_{\sharp}(C/S):&\Bun_{G}(C/S)&\longrightarrow&\Bun_{H}(C/S)\\
&\Big(E\to C_V\Big)&\longmapsto &\Big(\phi_{\sharp}(E):=E\times^{\phi,G}H=(E\times H)/G\to C_V\Big)
\end{array}
\end{equation}
where the (right) action of $G$ on $E\times H$ is $(p,h).g:=(p.g,\phi(g)^{-1}h)$.

We collect in the following  Fact the geometric properties of   $\Bun_G(C/S)$ and of the forgetful morphism $\Phi_{G}(C/S)$.

\begin{fact}\label{F:propBunG}
Let $G$ be a connected and smooth linear algebraic group and let $\pi:C\to S$ be a family of curves. 
\begin{enumerate}
\item \label{F:propBunG1} The morphism $\Phi_{G}(C/S)$ is locally of finite presentation, smooth, with affine and finitely presented relative diagonal. 
\item \label{F:propBunG2} There is a functorial decomposition into a disjoint union
\begin{equation}\label{E:PhiGcomp}
\Phi_{G}(C/S): \coprod_{\delta\in \pi_1(G)} \Bun_{G}^{\delta}(C/S)\stackrel{\Phi_{G}^{\delta}(C/S)}{\longrightarrow} S,
\end{equation}
such that the morphism $\Phi_G^{\delta}(C/S): \Bun_{G}^{\delta}(C/S)\to S$ has geometric integral fibers for every $\delta\in \pi_1(G)$.


\item \label{F:propBunG6} If $G$ is reductive, then for any $\delta\in\pi_1(G)$ the morphism  $\Phi_G^{\delta}(C/S):\Bun_G^{\delta}(C/S)\to S$ is Stein, i.e. it is fpqc and cohomologically flat in degree zero (which means that the natural morphism $(\Phi_G^{\delta}(C/S)^{\sharp}:\cO_S \to  \Phi_G^{\delta}(C/S)_*(\cO_{\Bun_G^{\delta}(C/S)})$ is a universal isomorphism), and of relative dimension equal to  $(g-1)\dim G$.
\end{enumerate}
\end{fact}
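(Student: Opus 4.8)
The plan is to prove the three parts in turn, combining standard foundational results on moduli of bundles with relative deformation theory and a Riemann--Roch computation on the fibres.

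\emph{Part \eqref{F:propBunG1}.} I would identify $\Bun_G(C/S)$ with the relative Hom-stack $\underline{\Hom}_S(C,S\times BG)$. Since $\pi\colon C\to S$ is proper, flat and of finite presentation and $BG$ is the quotient of a smooth affine $S$-scheme by the smooth affine group $G$, the algebraicity of $\Bun_G(C/S)$, the local finite presentation of $\Phi_G$, and the affineness and finite presentation of its relative diagonal follow from the representability theorems for Hom-stacks (Artin's criterion, Hall--Rydh); the affineness of the diagonal can also be seen by hand, since for $G$-bundles $E,E'$ on $C_V:=C\times_SV$ the sheaf $\underline{\operatorname{Isom}}_{C_V}(E,E')$ is the Weil restriction along the proper flat family $C_V/V$ of an affine, finitely presented $C_V$-scheme, hence affine and finitely presented over $V$. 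Smoothness of $\Phi_G$ is relative deformation theory: choosing a closed embedding $G\hookrightarrow\GL_n$ and forming the adjoint bundle $\ad(\cP)$ on the universal curve $\wh\pi$, the relative tangent complex of $\Phi_G$ at a bundle $E$ over a geometric point $s\to S$ is $R\Gamma(C_s,\ad E)[1]$, whose obstruction space $H^2(C_s,\ad E)$ vanishes because $C_s$ is a curve; being locally of finite presentation, $\Phi_G$ is then smooth.

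\emph{Part \eqref{F:propBunG2}.} First I would reduce to $G$ reductive: the morphism $\Bun_G(C/S)\to\Bun_{G^{\red}}(C/S)$ induced by $G\twoheadrightarrow G^{\red}$ is smooth with geometrically connected fibres — fibrewise over a $G^{\red}$-bundle it is a torsor under $R^1\wh\pi_*$ of an iterated affine-space bundle built from the unipotent radical — hence induces a bijection on connected components of geometric fibres, and $\pi_1(G)=\pi_1(G^{\red})$ by definition. For $G$ reductive, a $G$-bundle $E$ on a curve carries a numerical invariant $\delta(E)\in\pi_1(G)$, functorial under extension of structure group and computing, for a torus, the multidegree of the associated line bundles; the locus $\Bun_G^\delta(C/S)$ where this invariant equals $\delta$ is open and closed because the invariant is locally constant (via the Drinfeld--Simpson reduction to a Borel after an fppf base change, or via the affine-Grassmannian uniformization), and the decomposition \eqref{E:PhiGcomp} is functorial in $S$ since it is so on geometric fibres. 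Each geometric fibre $\Bun_G^\delta(C_s)$ is reduced (smooth over $k$, by part \eqref{F:propBunG1}) and irreducible by the irreducibility theorem for moduli of $G$-bundles of fixed degree on a smooth curve (Drinfeld--Simpson, Hoffmann, Heinloth), the general connected linear case reducing to the reductive one as above; hence the geometric fibres are integral.

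\emph{Part \eqref{F:propBunG6}.} Let $G$ be reductive. The morphism $\Phi_G^\delta$ is flat (smooth, by part \eqref{F:propBunG1}) and surjective with non-empty geometric fibres (by part \eqref{F:propBunG2}), hence faithfully flat. Its relative dimension is computed fibrewise by Riemann--Roch: at $[E]\in\Bun_G^\delta(C_s)$,
\[
\dim_{[E]}\Bun_G^\delta(C_s)=\dim H^1(C_s,\ad E)-\dim H^0(C_s,\ad E)=-\chi(C_s,\ad E)=-\big(\deg\ad E+(1-g)\dim G\big)=(g-1)\dim G,
\]
since $\deg\ad E=0$ (the determinant of the adjoint representation of a reductive group is the trivial character) and $\Bun_G^\delta(C_s)$ is smooth and irreducible. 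It remains to prove cohomological flatness in degree zero, i.e. that $\cO_S\to(\Phi_G^\delta)_*\cO_{\Bun_G^\delta(C/S)}$ is a universal isomorphism. By functoriality of $\Bun_G^\delta$ in $S$ it suffices to show this is an isomorphism for every family $C/S$; and since $\Phi_G^\delta$ is smooth with integral geometric fibres and its source is exhausted by finite-type open substacks whose complements have arbitrarily large fibrewise codimension, this reduces to the fibrewise statement $\Gamma(\Bun_G^\delta(C_s),\cO)=k$, that every global regular function on the moduli stack of $G$-bundles of degree $\delta$ on a curve is constant.

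\emph{The main obstacle} is exactly this last statement, together with its relative upgrade: $\Bun_G^\delta(C_s)$ is very far from proper — not even quasi-compact — so constancy of global functions is not formal. The route I would take, following Faltings and Biswas--Hoffmann, is to exhaust $\Bun_G^\delta(C_s)$ by quasi-compact open substacks whose complements have codimension tending to infinity (using the Harder--Narasimhan/instability stratification, or Drinfeld--Simpson's result that after an étale base change every $G$-bundle admits a Borel reduction of bounded degree) and to use the normality of $\Bun_G^\delta(C_s)$, which is smooth, to identify $\Gamma(\cO)$ with the global functions on a large-enough open locus where the vanishing is accessible; I would cite this argument, and the irreducibility inputs of part \eqref{F:propBunG2}, rather than reproduce them.
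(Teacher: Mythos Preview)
Your sketch is essentially correct and tracks the standard arguments, but note that the paper's own ``proof'' of this Fact consists entirely of citations to \cite{FV1} (Theorems~3.1, 3.1.1, 3.1.3, Corollary~3.1.2, and Proposition~3.3.2 therein); it does not reprove anything. What you have written is a reconstruction of the content behind those references, following the same route: Hom-stack representability and deformation theory for part~\eqref{F:propBunG1}, the Drinfeld--Simpson/Hoffmann/Heinloth connectedness results for part~\eqref{F:propBunG2}, and Riemann--Roch plus an exhaustion argument for part~\eqref{F:propBunG6}.

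Two small points worth tightening. First, your ``Weil restriction'' for the diagonal is slightly loose terminology: $C_V\to V$ is not finite, so what you really use is that the sections functor of an affine $C_V$-scheme of finite presentation along a proper flat curve is representable by an affine $V$-scheme (via embedding $G\hookrightarrow\GL_n$ and realizing $\underline{\operatorname{Isom}}$ as a closed subscheme of a linear scheme). Second, and more substantively, your reduction of cohomological flatness in degree zero to the fibrewise statement $\Gamma(\Bun_G^\delta(C_s),\cO)=k$ is not automatic for a non-proper, non-quasi-compact morphism; the exhaustion by opens of increasing fibrewise codimension that you invoke must be done \emph{relatively} over $S$, and one then uses smoothness plus Hartogs on the normal total space to pass from the open substack to the whole. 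This is exactly the step handled in the proof of \cite[Prop.~3.3.2]{FV1}, and you are right to flag it as the main obstacle and to cite rather than reproduce it. (Your worry about ``fpqc'' is not one: smooth and surjective implies fppf, hence fpqc in the sense of covers.)
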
 
\begin{proof}
Part \eqref{F:propBunG1}: see \cite[Thm. 3.1]{FV1} and the references therein. Part \eqref{F:propBunG2}: see \cite[Thm. 3.1.1, Cor. 3.1.2]{FV1} and the references therein. 
Part \eqref{F:propBunG6}: see  \cite[Thm. 3.1.3]{FV1} and the proof of \cite[Prop. 3.3.2]{FV1}.
\end{proof}

Since the center $\scr Z(G)$ of a reductive group $G$ acts functorially on any $G$-bundle, we have that $\scr Z(G)$  sits functorially inside the automorphism group of any $T$-point $(C_T\to T, E)$ of $\Bun_{G}^{\delta}(C/S)$ for any $\delta\in \pi_1(G)$. Hence we can form the rigidification 
\begin{equation}\label{E:rigid}
\nu_G^{\delta}=\nu_G^{\delta}(C/S):\Bun_{G}^{\delta}(C/S)\to \Bun_{G}^{\delta}(C/S)\fatslash \scr Z(G):=\Bunr_{G}^{\delta}(C/S),
\end{equation}
which turns out to be a $\scr Z(G)$-gerbe, i.e. a gerbe banded by $\scr Z(G)$.

\section{The Picard group of $\Bun^{\delta}_G(C/S)$}

We will assume throughout this section that:
\begin{itemize}
\item[$\star$] $\pi:C\to S$ is a family of smooth curves of genus  $g=g(C/S)$ over an integral and regular quotient stack $S$ (locally of finite type over $k$). 
We will denote by $\ov \eta\to S$ a geometric generic point of $S$ and by $C_{\ov \eta}:=C\times_S \ov \eta$ the corresponding geometric generic fiber.
\end{itemize}
The aim of this section is to study the Picard group of $\Bun^{\delta}_G(C/S)$ for a reductive group $G$.

\subsection{Recollection on  $\Pic \Bun^{\delta}_G(C/S)$} 

In this subsection, we recall some facts from \cite{FV1, FV2} on the Picard group of $\Bun^{\delta}_G(C/S)$, for a reductive group $G$ and a fixed $\delta\in \pi_1(G)$.  The case $g=0$ is slightly different from the other cases and,  for unifying the statements, we need the following definition.

\begin{deflemma}\label{DL:gen-d}
Let $G$ be a reductive group and pick a maximal torus $\iota:T_G\hookrightarrow G$. 
\begin{enumerate}
\item We say that a cocharacter $d\in\Lambda(T_G)$ is \emph{generic} if its image $d^{\ss}\in\Lambda(T_{G^\ss})\subset \Lambda(T_{G^\ad})$, into the cocharacter lattice of the adjoint group, satisfies one of the following equivalent conditions (see \cite[Lemma 2.2.3]{FV1})
	\begin{enumerate}[(i)]
		\item The contraction homomorphism 
		$$
		\begin{aligned}
		 (d^{\ss},-): \left(\Sym^2 \Lambda^*(T_{G^{\sc}})\right)^{\mathscr W_G}=\Bil^{s,\ev}(\Lambda(T_{G^\sc}))^{\scr W_G}& \to \Lambda^*(T_{G^\sc})\\
		b & \mapsto b(d^\ss,-)
		\end{aligned}	
		$$
		is injective.
		\item Let $G^{\ad}=G_1^{\ad}\times \ldots \times G_s^{\ad}$ be the decomposition of $G^{\ad}$ into almost-simple factors (which are then automatically semisimple adjoint groups) and choose maximal tori in such a way that $T_{G^{\ad}}=T_{G_1^{\ad}}\times \ldots \times T_{G_s^{\ad}}$. Then $d^{ss}$ satisfies the following condition
		\begin{equation*}
			d^{ss}=d_1+\ldots+d_s\in \Lambda(T_{G^{\ad}})= \Lambda(T_{G_1^{\ad}})\oplus \ldots \oplus \Lambda(T_{G_s^{\ad}}) 
		\end{equation*}
with $d_i\neq 0$ for every $1\leq i \leq s$.	
	\end{enumerate}
\item 	Any $\delta\in \pi_1(G)$ admits a lift $d\in \pi_1(T_G)=\Lambda(T_G)$ which is generic. 
\end{enumerate}
\end{deflemma}

We collect in the following Fact the main results proved in \cite{FV1} for the Picard group of $\Bun_G^{\delta}(C/S)$. 

\begin{fact}\label{F:Pic-BunG}
Let 
$G$ be a reductive group and fix  $\delta\in\pi_1(G)$. 

\begin{enumerate}
\item \label{F:Pic-BunG0}
There exists an exact sequence 
$$
0\to \Pic(S) \to \Pic \Bun_G^{\delta}(C/S)\to \Pic \Bun_G^{\delta}(C_{\ov \eta})
$$
where the first map is the pull-back map and the second one is the restriction to the fiber of $C\to S$ over a geometric generic point $\ov \eta$ of $S$.

\item \label{F:Pic-BunG1} For any lift $d\in\pi_1(B_G)=\pi_1(T_G)=\Lambda(T_G)$ of $\delta$ (which is moreover generic if $g=0$), the pull-back maps along the inclusions $\iota:T_G\hookrightarrow B_G \hookrightarrow G$ 
$$\iota_\sharp^*:\Pic \Bun^{\delta}_G(C/S)\hookrightarrow \Pic \Bun^d_{B_G}(C/S)\xrightarrow{\cong} \Pic \Bun^d_{T_G}(C/S)$$
are such that  the first map is injective and the second one is an isomorphism. 

\item \label{F:Pic-BunG2}  There exists a unique homomorphism  (called \emph{transgression map})
\begin{equation}\label{E:trasgrG}
\tau_G^{\delta}=\tau^{\delta}_G(C/S):(\Sym^2 \Lambda^*(T_G))^{\mathscr W_G}\hookrightarrow \Pic \Bun_G^{\delta}(C/S),
\end{equation}
functorial in $S$ and $G$, and fitting in the following commutative diagram for any lift $d\in \pi_1(T_G)$ (which is generic if $g=0$) of $\delta\in \pi_1(G)$ 
 \begin{equation}\label{E:trasgrGT}
 \xymatrix{
(\Sym^2 \Lambda^*(T_G))^{\mathscr W_G}\ar[r]^{\tau^{\delta}_G} \ar@{^{(}->}[d] & \Pic \Bun_G^{\delta}(C/S)\ar@{^{(}->}[d]^{\iota_\sharp^*}, \\
\Sym^2 \Lambda^*(T_G)\ar[r]^(0.4){\tau^d_{T_G}} & \Pic \Bun_{T_G}^{d}(C/S), \\
\chi\cdot \chi' \ar[r] & \langle \cL_{\chi}, \cL_{\chi'}  \rangle_{\wh \pi} 
}
\end{equation}
where $\langle -,-\rangle_{\pi}$ is the Deligne pairing of the family $\wh \pi:C\times_S \Bun_{T_G}^d(C/S)\times\Bun_{T_G}^d(C/S) $ and $\cL_{\chi}$ is the pull-back of the universal $\Gm$-bundle (i.e. line bundle) over $C\times_S \Bun_{\Gm}(C/S)$ via the morphism $\id \times \chi_\#:C\times_S \Bun_G^{\delta}(C/S)\to C\times_S \Bun^{(\delta,\chi)}_{\Gm}(C/S)$.
\end{enumerate}
\end{fact}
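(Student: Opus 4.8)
The plan is to deduce all three items, following the strategy of \cite{FV1}, from the interplay between the cohomological flatness of the forgetful morphism $\Phi_G^\delta(C/S)$ and the behaviour of Picard groups along the two ``resolution'' morphisms $\Bun_{T_G}^d(C/S)\xrightarrow{q}\Bun_{B_G}^d(C/S)\xrightarrow{\iota_\sharp}\Bun_G^\delta(C/S)$ attached to a chosen maximal torus inside a Borel of $G$. For part \eqref{F:Pic-BunG0}, the injectivity of $(\Phi_G^\delta)^*:\Pic(S)\to \Pic\Bun_G^\delta(C/S)$ is a formal consequence of Fact \ref{F:propBunG}\eqref{F:propBunG6}: since $(\Phi_G^\delta)_*\cO=\cO_S$ universally, a line bundle on $S$ with trivial pull-back pushes forward, by the projection formula, to itself, hence is trivial. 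For exactness in the middle I would argue that a line bundle $\cN$ on $\Bun_G^\delta(C/S)$ trivial on the geometric generic fibre is already trivial on $\Bun_G^\delta(C_U/U)$ for some dense open $U\subseteq S$, hence (cohomological flatness over $U$) pulled back from $U$; after twisting by a line bundle on $S$ restricting to it (possible since $S$ is regular), $\cN$ becomes trivial on $\Bun_G^\delta(C_U/U)$, and then, using that $\Bun_G^\delta(C/S)$ is regular and integral with geometrically integral fibres over the integral base $S$ (Fact \ref{F:propBunG}\eqref{F:propBunG2}), the divisor of $\cN$ is supported on preimages of prime divisors of $S\setminus U$, each of which is irreducible and is a pull-back; so $\cN$ comes from $\Pic(S)$.

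For part \eqref{F:Pic-BunG1}, I would first observe that $q:\Bun_{B_G}^d(C/S)\to \Bun_{T_G}^d(C/S)$, induced by $B_G\twoheadrightarrow T_G$, is a representable, smooth, Zariski-locally trivial iterated affine bundle — its fibres being iterated extensions of the vector groups $H^\bullet(C_s,-)$ attached to the line bundles in the filtration of the unipotent radical of $B_G$ — so $q^*$ is an isomorphism on Picard groups, compatibly with the labelling of connected components. The heart of the statement is the injectivity of $\iota_\sharp^*$: for this I would use, as in \cite{DN, LS97, FV1}, that the Borel-reduction morphism $\iota_\sharp:\Bun_{B_G}^d(C/S)\to \Bun_G^\delta(C/S)$ is surjective and, over an open substack of $\Bun_G^\delta(C/S)$ whose complement has codimension $\geq 2$, is a fibration with integral fibres; combined with smoothness and integrality of the stacks, this forces $\iota_\sharp^*$ to be injective. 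The codimension estimate rests on the Drinfeld--Simpson theorem that a generic $G$-bundle on a curve reduces to the Borel, carried out in the relative setting over $S$; in the case $g=0$ one must work with a \emph{generic} lift $d$ of $\delta$ (Definition/Lemma \ref{DL:gen-d}) to keep this locus large enough.

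For part \eqref{F:Pic-BunG2}, having the injection $\iota_\sharp^*:\Pic\Bun_G^\delta(C/S)\hookrightarrow \Pic\Bun_{T_G}^d(C/S)$ from \eqref{F:Pic-BunG1}, I would \emph{define} $\tau_G^\delta$ as $(\iota_\sharp^*)^{-1}$ composed with $\tau_{T_G}^d$ restricted to $\scr W_G$-invariants; uniqueness and the commutativity of \eqref{E:trasgrGT} are then automatic from the injectivity of $\iota_\sharp^*$. The content is that the image of $\iota_\sharp^*$ contains $\tau_{T_G}^d\bigl((\Sym^2\Lambda^*(T_G))^{\scr W_G}\bigr)$: I would check that for a $\scr W_G$-invariant element $b=\sum\chi\cdot\chi'$ the Deligne-pairing line bundle $\langle\cL_\chi,\cL_{\chi'}\rangle_{\wh\pi}$ carries a canonical $\scr W_G$-equivariant structure and therefore descends along $\iota_\sharp$, which is the relevant descent statement of \cite[\S 5]{FV1}. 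Independence of the choice of $T_G\subset B_G$ (all conjugate) and of the generic lift $d$ over $\delta$ follows since inner automorphisms act trivially on $\Pic$ and the components for varying $d$ are compatibly identified; functoriality in $S$ comes from compatibility of the Deligne pairing with base change, and functoriality in $G$ from that of all the morphisms of stacks involved, with the tori chosen compatibly.

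The step I expect to be the main obstacle is the injectivity of $\iota_\sharp^*$ in part \eqref{F:Pic-BunG1}: establishing the codimension-$\geq 2$ estimate for the ``bad'' locus of the Borel-reduction morphism requires Drinfeld--Simpson-type uniformization in a relative setting over the (only assumed regular and integral) base $S$, and the genus-zero case, where a careless choice of the lift $d$ destroys the estimate, needs the genericity input of Definition/Lemma \ref{DL:gen-d}.
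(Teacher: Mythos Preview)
The paper's own proof is a one-line citation of results from \cite{FV1}, so what you are really doing is reconstructing the arguments of \cite[\S 5--6]{FV1}. Your sketches for parts \eqref{F:Pic-BunG0} and \eqref{F:Pic-BunG1} are broadly in line with those arguments: cohomological flatness plus a spreading-out/divisor argument for \eqref{F:Pic-BunG0}, and for \eqref{F:Pic-BunG1} the identification $\Pic\Bun_{B_G}^d\cong\Pic\Bun_{T_G}^d$ via the reductive-quotient result (your ``iterated affine bundle'' is exactly the mechanism behind \cite[Thm.~A]{FV1}). One notational slip: in your opening line you write the resolution as $\Bun_{T_G}^d\xrightarrow{q}\Bun_{B_G}^d$, but later define $q$ via the projection $B_G\twoheadrightarrow T_G$, i.e.\ going the other way; you should say explicitly that $\iota_\sharp$ is a section of this $q$, so that $\iota_\sharp^*=(q^*)^{-1}$.

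There is, however, a genuine gap in your argument for part \eqref{F:Pic-BunG2}. You propose to show that $\tau_{T_G}^d(b)$ lies in the image of $\iota_\sharp^*$ by exhibiting ``a canonical $\scr W_G$-equivariant structure'' on the Deligne pairing and then ``descending along $\iota_\sharp$''. But $\scr W_G$ does not act on $\Bun_{T_G}^d(C/S)$: the action of $w\in\scr W_G$ sends a $T_G$-bundle of degree $d$ to one of degree $w\cdot d$, so it permutes the components rather than preserving $\Bun_{T_G}^d$. Even the abstract $\scr W_G$-action on $\Pic\Bun_{T_G}^d$ that appears later in the paper (proof of Theorem~\ref{T:PicBunGtf}) is defined only under extra hypotheses on $C/S$ and is used to \emph{identify} $\operatorname{Im}(\iota_\sharp^*)$ with the invariants, not as a descent datum. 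The construction in \cite[Thm.~5.0.1]{FV1} is instead \emph{direct}: one produces line bundles on $\Bun_G^\delta(C/S)$ itself (via determinants of cohomology of associated vector bundles, or equivalently via central extensions of loop groups), and then checks that their restriction along $\iota_\sharp$ agrees with $\tau_{T_G}^d$ on $\scr W_G$-invariants. Without such a direct construction your definition of $\tau_G^\delta$ as $(\iota_\sharp^*)^{-1}\circ\tau_{T_G}^d$ is circular. So the main obstacle is not, as you suggest, the injectivity in \eqref{F:Pic-BunG1}, but rather producing the line bundles on $\Bun_G^\delta$ in \eqref{F:Pic-BunG2}.
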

\begin{proof}
Part \eqref{F:Pic-BunG0} follows from \cite[Prop. 5.1.1]{FV1}.  Part \eqref{F:Pic-BunG1} follows from \cite[Cor. 5.1.2, Thm. 6.0.1]{FV1}. Part \eqref{F:Pic-BunG2} follows from \cite[Thm. 5.0.1, Rmk. 5.2.2]{FV1} (whose proof works also for $g=0$). 
\end{proof}

Using Fact \ref{F:Pic-BunG}\eqref{F:Pic-BunG0}, we will often restrict to study the \emph{relative Picard group} 
\begin{equation}\label{E:RPic}
\RPic \Bun_G^\delta(C/S):=\frac{\Pic \Bun_G^{\delta}(C/S)}{\Pic(S)}.
\end{equation}

\subsection{The Picard group of $\Bun^d_T(C/S)$ for a torus $T$} 

The goal of this subsection is to determine the structure of the Picard group of $\Bun^{d}_T(C/S)$, where:
\begin{itemize}
\item[$\star$] $T$ is a torus and  $d\in \Lambda(T)$. 
\end{itemize}

We first introduce some natural line bundles on $\Bun_T^d(C/S)$.
For any character $\chi \in \Lambda^*(T)$ and $M\in \Pic(C)$, we consider the following line bundle on $C\times_S \Bun_T^d(C/S)$ 
$$\cL_{\chi}(M):=\cL_{\chi}\otimes p_1^*M,$$
where $p_1$ is the projection onto the first factor and $\cL_{\chi}$ is  the pull-back of the universal $\Gm$-bundle (i.e. line bundle) over $C\times_S \Bun_{\Gm}(C/S)$ via the morphism 
$\id\times \chi_\#:C\times_S \Bun_T^{d}(C/S)\to C\times_S \Bun^{(d,\chi)}_{\Gm}(C/S)$ induced by $\chi$. 

\begin{defin}\label{D:tautPic}
The \emph{tautological Picard group} of $\Bun_T^d(C/S)$ is the subgroup $\Pic^{\taut}\Bun_T^d(C/S)$  of  $\Pic\Bun_T^d(C/S)$ generated by the pull-back of the line bundles on $S$ and the following line bundles  (called \emph{tautological line bundles})
$$
\left\{d_{\wh \pi}(\cL_{\chi}(M))\: : \chi\in \Lambda^*(T), M\in \Pic(C)\right\}.
$$
where $d_{\wh \pi}$ denotes the determinant of cohomology with respect to the family $\wh \pi:C\times_S \Bun_T^d(C/S)\to \Bun_T^d(C/S)$. 

The \emph{relative tautological Picard group} of $\Bun_T^d(C/S)$ is the subgroup $\RPic^{\taut}\Bun_T^d(C/S)\subseteq \RPic \Bun_T^d(C/S)$ which is the image of  $\Pic^{\taut} \Bun_T^d(C/S)$ into the relative Picard group. 
\end{defin}

\begin{rmk}
The tautological Picard group $\Pic^{\taut}\Bun_T^d(C/S)$ contains all the line bundles 
$$\left\{\langle \cL_{\chi}(M),\cL_{\mu}(N)\rangle_{\wh \pi}: \chi,\mu\in \Lambda^*(T), M,N\in \Pic(C) \right\},$$
where $\langle - ,  - \rangle_{\wh \pi}$ is the Deligne pairing, since we have the following isomorphism
\begin{equation}\label{E:Del-det}
\langle \cL_{\chi}(M),\cL_{\mu}(N)\rangle_{\wh \pi}\cong d_{\wh \pi}(\cL_{\chi+\mu}(M\otimes N))\otimes d_{\wh \pi}(\cL_{\chi}(M))^{-1}\otimes d_{\wh \pi}(\cL_{\mu}(N))^{-1}\otimes d_{\wh \pi}(\cO).
\end{equation}
\end{rmk}

\vspace{0.1cm}

The Picard group of $\Bun_T^d(C/S)$ is endowed with a \emph{weight function} 
\begin{equation}\label{E:weight}	
	\Pic \Bun_T^d(C/S)\xrightarrow{w_T^{d}}\Lambda^*(T),
\end{equation}
which is defined as it follows. 
Let $\mathcal F$ be a  line bundle on $\Bun_T^d(C/S))$ and fix a $k$-point $\xi:=(C_s, E)$ of $\Bun_T^d(C/S)$ over $s\in S$. The automorphism group of $\xi$ acts on the fiber $\mathcal F_{\xi}$ of $\mathcal F$ over $\xi$. Since the torus $T$ is contained in the automorphism group of $\xi$, this defines an action of $T$ on $\mathcal F_{\xi}\cong k$ which  is given by a character of $T$.  This character, which is independent of the chosen $k$-point $\xi$ and on the chosen isomorphism $\mathcal F_{\xi}\cong k$,  coincides with $w_T^d(\mathcal F)$. Note that $w_T^d$ factors through the relative Picard group.

The weight function on the tautological Picard group is easily described. 

\begin{lem}
The weight function $w_T^d$ is given on tautological line bundles by 
\begin{equation}\label{E:wgt-det}
w_T^d(d_{\wh \pi}(\cL_{\chi}(M)))=[\chi(d)+\deg_\pi(M)+1-g]\chi.
\end{equation}
In particular, we have that 
\begin{equation}\label{E:wgt-Del}
w_T^d(\langle \cL_{\chi}(M),\cL_{\mu}(N)\rangle_{\wh \pi})=[\mu(d)+\deg_\pi(N)]\chi+[\chi(d)+\deg_\pi(M)]\mu. 
\end{equation}
\end{lem}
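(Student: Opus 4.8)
The plan is to compute the weight directly on a single fibre, exploiting that by definition $w_T^d(\mathcal F)$ is the character by which $T\subseteq\Aut(\xi)$ acts on the line $\mathcal F_\xi$ for a $k$-point $\xi=(C_s,E)$ of $\Bun_T^d(C/S)$, and that this character does not depend on $\xi$ (as recalled above). Since the determinant of cohomology is compatible with base change, the fibre of $d_{\wh\pi}(\cL_\chi(M))$ over $\xi$ is canonically the one-dimensional vector space $\det R\Gamma\bigl(C_s,\cL_\chi(M)|_{C_s}\bigr)=\det H^0\otimes(\det H^1)^{-1}$, so the whole statement reduces to understanding the $T$-action on this line.

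First I would identify $\cL_\chi(M)|_{C_s}$. The torus $T$ acts on the $T$-bundle $E$, and pushing $E$ forward along $\chi$ yields the line bundle $L_\chi:=E\times^{\chi,T}\Gm$ on $C_s$; by construction $\cL_\chi$ is the pull-back of the universal line bundle along $\chi_\#$, so $\cL_\chi|_{C_s}\cong L_\chi$ and hence $\cL_\chi(M)|_{C_s}\cong L_\chi\otimes M_s$ with $M_s:=M|_{C_s}$. The one numerical input is that $\deg L_\chi=\chi(d)$: this is exactly the assertion that $\chi_\#$ maps the component $\Bun_T^d$ into $\Bun_{\Gm}^{\chi(d)}$, and it is the only place where the label $d\in\Lambda(T)$ enters. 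Consequently $\deg\bigl(\cL_\chi(M)|_{C_s}\bigr)=\chi(d)+\deg_\pi(M)$.

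Next comes the action. An element $t\in T$, regarded as an automorphism of $E$, acts on $L_\chi$ by the scalar $\chi(t)$, hence on $L_\chi\otimes M_s$ and therefore on each $H^i(C_s,L_\chi\otimes M_s)$ again by the scalar $\chi(t)$. Thus $t$ acts on $\det H^0\otimes(\det H^1)^{-1}$ by $\chi(t)^{h^0-h^1}$, i.e. the weight equals $e\cdot\chi$, where $e=h^0-h^1$ is the Euler characteristic of $L_\chi\otimes M_s$ on $C_s$. Riemann--Roch on the genus $g$ curve $C_s$ gives $e=\deg(L_\chi\otimes M_s)+1-g=\chi(d)+\deg_\pi(M)+1-g$, which is \eqref{E:wgt-det}.

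Finally, \eqref{E:wgt-Del} is purely formal once \eqref{E:wgt-det} is known: $w_T^d$ is a group homomorphism (it factors through $\RPic\Bun_T^d(C/S)$), so applying it to the isomorphism \eqref{E:Del-det} and invoking \eqref{E:wgt-det} for each of the four determinant-of-cohomology factors — the last, $d_{\wh\pi}(\cO)$, being the case $\chi=0$ and hence of weight $0$ — a short cancellation of the $(1-g)$-terms and of the ``diagonal'' contributions leaves exactly $[\mu(d)+\deg_\pi(N)]\chi+[\chi(d)+\deg_\pi(M)]\mu$. I do not expect a genuine obstacle here: the only points demanding care are the normalization $\deg L_\chi=\chi(d)$ and the sign convention in $\det R\Gamma=\det H^0\otimes(\det H^1)^{-1}$; once these are fixed, the argument is Riemann--Roch together with additivity of the weight function.
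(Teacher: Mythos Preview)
Your proof is correct and is precisely the standard computation the paper has in mind: the paper's own proof is a one-line reference to \cite[Prop.~4.1.2(1)]{FV1}, where the same fibrewise Riemann--Roch argument (identify $\cL_\chi(M)|_{C_s}$, note that $T$ acts by the scalar $\chi(t)$, and read off $\chi^{h^0-h^1}$) is carried out. The deduction of \eqref{E:wgt-Del} from \eqref{E:wgt-det} via \eqref{E:Del-det} is likewise the intended route.
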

\begin{proof}
This is proved as in \cite[Prop. 4.1.2(1)]{FV1}.
\end{proof}

Using the weight function, we can now describe the structure of the Picard group of $\Bun_T^d(C/S)$ for a family of curves of genus $0$.

\begin{teo}\label{T:PicBunT-0}
Assume that  $g(C/S)=0$.    Then 
\begin{enumerate}
\item \label{T:PicBunT-01} $\Pic \Bun_T^d(C/S)=\Pic^{\taut}\Bun_T^d(C/S)$.
\item \label{T:PicBunT-02} The weight function 
$$
w_T^d:  \RPic \Bun_T^d(C/S)\hookrightarrow \Lambda^*(T)
$$
is injective on the relative Picard group. 
\item \label{T:PicBunT-03} The image of $w_T^d$ is equal to 
$$
\Im(w_T^d)=
\begin{cases}
\Lambda^*(T) & \text{ if } C/S \text{ is Zariski-locally trivial},\\
\{\chi \in \Lambda^*(T)\: : \chi(d) \: \text{ is even}\}  & \text{ if } C/S \text{ is not Zariski-locally trivial.}\
\end{cases}
$$
\end{enumerate}
\end{teo}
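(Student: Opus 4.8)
The plan is to reduce everything to the known structure of $\Pic\Bun_T^d(C/S)$ over a geometric generic point (equivalently, over a curve over $k=\ov k$), and then bootstrap using the fact that $g=0$ makes the family very explicit. First I would handle part \eqref{T:PicBunT-02}, the injectivity of $w_T^d$ on $\RPic$: by Fact \ref{F:Pic-BunG}\eqref{F:Pic-BunG1} (or rather its torus case) the restriction to a geometric generic fiber $\Bun_T^d(C_{\ov\eta})$ is injective on $\RPic$, and for a curve over an algebraically closed field Biswas--Hoffmann (cf. \cite[Prop. 4.1.1]{FV1}) describe $\Pic\Bun_T^d(C_{\ov\eta})$ explicitly; when $g=0$ the Jacobian is trivial, so $\RPic\Bun_T^d(\bbP^1)$ is free of rank $\rk T$ and $w_T^d$ identifies it with a full-rank sublattice of $\Lambda^*(T)$. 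Chasing through Fact \ref{F:Pic-BunG}\eqref{F:Pic-BunG0} and the fact that $w_T^d$ factors through $\RPic$ gives injectivity over $S$.

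Next I would establish part \eqref{T:PicBunT-01}, that all line bundles are tautological. The key point is that, since $w_T^d$ is injective on $\RPic$ (from the previous step) and the tautological line bundles already surject onto the image of $w_T^d$ (to be checked against the formula \eqref{E:wgt-det}), every relative class is represented by a tautological one; combined with the inclusion $\Pic(S)\hookrightarrow \Pic^{\taut}$ coming from the definition, this forces $\Pic=\Pic^{\taut}$. So in fact \eqref{T:PicBunT-01} is a formal consequence of \eqref{T:PicBunT-02} together with the surjectivity statement $\Im(w_T^d)=\Im(w_T^d|_{\Pic^{\taut}})$, which is itself reduced to a rank-one computation by decomposing $T\cong\Gm^r$ — for $T=\Gm$ and $C/S=\bbP^1$-bundle one checks directly which characters (equivalently, which integers, via the isomorphism $\Lambda^*(\Gm)=\bbZ$) are hit by determinants $d_{\wh\pi}(\cL_\chi(M))$.

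The heart of the matter, and the step I expect to be the main obstacle, is part \eqref{T:PicBunT-03}: computing $\Im(w_T^d)$ exactly, and in particular explaining the dichotomy between the Zariski-locally-trivial case and the non-trivial case. By Remark \ref{R:deltaCS}\eqref{R:deltaCS4}, $C/S$ is Zariski-locally trivial iff it admits a section iff $\delta(C/S)=1$, whereas otherwise $\delta(C/S)=2$. If $C/S$ admits a section $\sigma$, then $M=\sigma_*(S)$ has $\deg_\pi M=1$, and plugging into \eqref{E:wgt-det} with $g=0$ gives $w_T^d(d_{\wh\pi}(\cL_\chi(M)))=[\chi(d)+2]\chi$; subtracting the weight of $d_{\wh\pi}(\cL_\chi)$, which is $[\chi(d)+1]\chi$, we isolate the class $\chi$ itself, so $\Im(w_T^d)=\Lambda^*(T)$. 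When there is no section, every $M\in\Pic(C)$ has $\deg_\pi M$ even (this is exactly $\delta(C/S)=2$), so $\chi(d)+\deg_\pi M+1-g=\chi(d)+\deg_\pi M+1$ has the parity of $\chi(d)+1$; one then has to show the multiples of $\chi$ one can produce are precisely those with $\chi(d)$ even. The containment $\Im(w_T^d)\subseteq\{\chi:\chi(d)\text{ even}\}$ requires a parity argument on the coefficients appearing in \eqref{E:wgt-det} and \eqref{E:wgt-Del} — note $w_T^d(\langle\cL_\chi(M),\cL_\mu(N)\rangle)$ has $\chi$-coefficient $\mu(d)+\deg_\pi N$ which need not be even, so one must track the full sublattice generated and argue, perhaps after reducing to $T=\Gm$ and using that $\deg_\pi$ always lands in $2\bbZ$, that the $\chi$-component of any tautological weight is a multiple of $\chi$ with coefficient $\equiv \chi(d)\pmod 2$ times something, or more precisely that the image lattice is exactly $\{\chi:\chi(d)\in 2\bbZ\}$. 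The reverse containment should follow by exhibiting, for each such $\chi$, an explicit $M$ (of even relative degree) making $\chi(d)+\deg_\pi M+1$ equal to $\pm1$, hence $w_T^d(d_{\wh\pi}(\cL_\chi(M)))=\pm\chi$. The subtlety — and why this is the real work — is that the available degrees of $M$ when $C/S$ is a non-trivial conic bundle are constrained ($\deg_\pi M\in 2\bbZ$, but also whether $M$ with a prescribed even relative degree actually exists on the total space $C$, which is where the hypothesis that $S$ is a regular integral quotient stack and $\Pic(C)\twoheadrightarrow\Pic_{C/S}(S)$-type considerations may enter), so one needs to combine the genus-zero Riemann--Roch bookkeeping with the surjectivity of the relative degree map onto $2\bbZ$ guaranteed by $\delta(C/S)=2$.
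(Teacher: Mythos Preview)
Your approach to part \eqref{T:PicBunT-02} via restriction to the geometric generic fiber works, though the paper's argument is more direct: since $g=0$ the rigidification $\Bun_T^d(C/S)\fatslash T$ is just $S$, and the Leray sequence for the $T$-gerbe immediately gives $0\to\Pic(S)\to\Pic\Bun_T^d(C/S)\xrightarrow{w_T^d}\Lambda^*(T)$. Your derivation of part \eqref{T:PicBunT-01} as a formal consequence of injectivity plus $\Im(w_T^d)=\Im(w_T^d|_{\Pic^{\taut}})$ is also fine, and your treatment of the Zariski-locally-trivial case and of the reverse containment in the non-trivial case is essentially the paper's.

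There is, however, a genuine gap in the non-trivial case. You propose to prove the forward containment $\Im(w_T^d)\subseteq\{\chi:\chi(d)\text{ even}\}$ by a ``parity argument on the coefficients appearing in \eqref{E:wgt-det} and \eqref{E:wgt-Del}''. But those formulas only compute weights of \emph{tautological} line bundles, so this argument establishes at most $\Im(w_T^d|_{\Pic^{\taut}})\subseteq\{\chi:\chi(d)\text{ even}\}$. You cannot then conclude $\Im(w_T^d)=\Im(w_T^d|_{\Pic^{\taut}})$, because that equality is precisely what you are trying to prove in order to deduce part \eqref{T:PicBunT-01}. The argument is circular: you need the image constraint for \emph{arbitrary} line bundles, not just tautological ones, and at this stage you have no control over non-tautological classes. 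Restriction to the geometric generic fiber does not help either, since over $\bbP^1_{\ov\eta}$ the image of $w_T^d$ is all of $\Lambda^*(T)$.

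The paper breaks the circularity with a base-change trick. Given an arbitrary $L\in\Pic\Bun_T^d(C/S)$ with $w_T^d(L)=\chi$, pull back along $\pi_T:\Bun_T^d(C\times_S C/C)\to\Bun_T^d(C/S)$. The new family $C\times_S C\to C$ has the diagonal section $\Delta$, so by the already-established locally-trivial case, $w_T^d$ is injective on $\RPic$ over $C$ and hits all of $\Lambda^*(T)$. Since $\langle\cL_\chi,\cO(\wt\Delta)\rangle_{\wh{\pi_2}}$ also has weight $\chi$, injectivity forces $\pi_T^*(L)=\langle\cL_\chi,\cO(\wt\Delta)\rangle_{\wh{\pi_2}}\otimes\Phi_C^*(M)$ for some $M\in\Pic(C)$. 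Taking $\pi_T$-relative degree of both sides yields $0=\chi(d)+\deg_\pi(M)$, and since $\deg_\pi(M)\in 2\bbZ$ one concludes $\chi(d)$ is even. This step---comparing an arbitrary $L$ to an explicit tautological bundle after passing to a base where the family acquires a section---is the missing idea in your proposal.
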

The above Theorem was proved for the universal family $\cC_{0,n}\to \cM_{0,n}$ (which is Zariski-locally trivial if and only if $n>0$, see Remark \ref{R:deltaCS}) in \cite[Thm. 4.2]{FV1}. 

\begin{proof}
Part \eqref{T:PicBunT-02}: since $g(C/S)=0$, the rigidification $\Bun_T^d(C/S)\fatslash T$ is isomorphic to $S$. 
Hence, the Leray spectral sequence for $\Gm$ associated to the $T$-gerbe  $\Bun_T^d(C/S)\to \Bun_T^d(C/S)\fatslash T$ gives the desired exact 
sequence
$$
0\to \Pic(S)=\Pic(\Bun_T^d(C/S)\fatslash T) \to \Pic \Bun_T^d(C/S)\xrightarrow{w_T^d} \Pic(BT)=\Lambda^*(T).
$$

Note that $\delta(C/S)=1$ or $2$ according to whether $C/S$ is Zariski-locally trivial or not (see Remark \ref{R:deltaCS}\eqref{R:deltaCS4}).
Hence, using the above exact sequence, in order to prove parts \eqref{T:PicBunT-01} and \eqref{T:PicBunT-03}, it is enough to prove the two equalities
\begin{equation}\label{E:eq-check}
w_T^d(\Pic^{\taut}\Bun_T^d(C/S))=\Im(w_T^d)=
\begin{cases}
\Lambda^*(T) & \text{ if } \delta(C/S)=1,\\
\{\chi \in \Lambda^*(T)\: : \chi(d) \: \text{ is even}\}  & \text{ if } \delta(C/S)=2.
\end{cases}
\end{equation}

We will distinguish two cases.

\un{Case 1:} $\delta(C/S)=1$.

In this case, the relative degree map \eqref{E:rel-deg} is an isomorphism. Hence, given any $\chi\in \Lambda^*(T)$, we can choose a line bundle $M_{\chi}\in \Pic(C)$ having relative degree equal to 
$g-\chi(d)$ and then formula \eqref{E:wgt-det} implies that 
$$
w_T^d(d_{\wh \pi}(\cL_{\chi}(M_{\chi})))=\chi.
$$
This shows that 
$$
w_T^d(\Pic^{\taut}\Bun_T^d(C/S))=\Lambda^*(T),
$$
which gives the desired two equalities in \eqref{E:eq-check}.

\un{Case 2:} $\delta(C/S)=2$.

In this case, the image of the relative degree map \eqref{E:rel-deg} is the subgroup $2\bbZ\subset \bbZ$.  We will divide the proof of the equalities in \eqref{E:eq-check} in three steps.

$\bullet$  $w_T^d(\Pic^{\taut}\Bun_T^d(C/S))\subseteq \{\chi \in \Lambda^*(T)\: : \chi(d) \: \text{ is even}\}$.

To prove this, since $\Pic^{\taut} \Bun_T^d(C/S)$ is generated the tautological line bundles $d_{\wh \pi}(\cL_{\chi}(M))$, it is enough to show, using formula \ref{E:wgt-det}, that 
$$
d_{\wh \pi}(\cL_{\chi}(M))(d)=[\chi(d)+\deg_{\pi}(M)+1]\chi(d)\:  \text{ is even,}  
$$
for any $\chi\in \Lambda^*(T)$ and any $M\in \Pic(C)$.  This is obvious if $\chi(d)$ is even. Otherwise, if $\chi(d)$ is odd, then it follows from the fact that
$\chi(d)+\deg_{\pi}(M)+1$ is even because $\deg_{\pi}(M)$ is even.

$\bullet$  $w_T^d(\Pic^{\taut}\Bun_T^d(C/S))\supseteq \{\chi \in \Lambda^*(T)\: : \chi(d) \: \text{ is even}\}$.

Let $\chi\in \Lambda^*(T)$ such that $\chi(d)$ is even. Then we can choice a line bundle $M_{\chi}\in \Pic(C)$ such that  $\deg_{\pi}(M_{\chi})=-\chi(d)$.  
We then have, using formula \ref{E:wgt-det}, that
$$
d_{\wh \pi}(\cL_{\chi}(M_{\chi}))=[\chi(d)+\deg_{\pi}(M_{\chi})+1]\chi=\chi,
$$
which shows that $\chi \in w_T^d(\Pic^{\taut}\Bun_T^d(C/S))$. 

$\bullet$ $\Im(w_T^d)\subseteq \{\chi \in \Lambda^*(T)\: : \chi(d) \: \text{ is even}\}$. 

Let $L$ be a line bundle on $\Bun_T^d(C/S)$ and set $\chi:=w_T^d(L)\in \Lambda^*(T)$. We want to show that $\chi(d)$ is even. 
Consider the following Cartesian diagram 
\begin{equation}\label{E:C*C}
\xymatrix{ 
\ar@{}[ddr]|\square &\Bun_T^d(C/S)\times_S C \ar[d]^{\wh \pi} \ar[dl] \ar@{}[dr]|\square & \Bun_T^d(C\times_S C /C)\times_C (C\times_S C) \ar@{}[ddr]|\square\ar[d]_{\wh{\pi_2}} \ar[l] \ar[dr] & \\
C \ar[dr]^{\pi}& \Bun_T^d(C/S)\ar[d]^{\Phi_S} \ar@{}[dr]|\square&  \Bun_T^d(C\times_S C /C) \ar[d]^{\Phi_C} \ar[l]^{\pi_T} \ar@/_1pc/[u]_{\wt \Delta}& C\times_S C\ar[dl]^{\pi_2}\\
& S & C\ar[l]^{\pi}\ar@/_2pc/[ur]_{\Delta} & 
}
\end{equation}
The pull-back $\pi_T^*(L)$ on $\Bun_T^d(C\times_S C/C)$ verifies $w_T^d(\pi_T^*(L))=\chi$, since the weight function is functorial with respect to base change. 
Consider now the line bundle $\langle \cL_{\chi}, \cL_0(\cO(\Delta))\rangle_{\wh{\pi_2}}=\langle\cL_{\chi}, \cO(\wt \Delta)\rangle_{\wh{\pi_2}}$ on $\Bun_T^d(C\times_S C/C)$. By \eqref{E:wgt-Del} and the fact that $ \cO(\Delta)$ has $\pi_2$-relative degree equal to $1$, we have that 
$$
w_T^d(\langle \cL_{\chi}, \cO(\wt \Delta)\rangle_{\wh{\pi_2}})=\chi.
$$
Therefore, since $\pi_T^*(L)$ and $\langle \cL_{\chi}, \cO(\wt \Delta)\rangle_{\wh{\pi_2}}$ have the same weight, part \eqref{T:PicBunT-02} implies that  
$$
\pi_T^*(L)=\langle \cL_{\chi}, \cO(\wt \Delta)\rangle_{\wh{\pi_2}}\otimes \Phi_C^*(M),
$$
for some $M\in \Pic(C)$. Taking the relative degree with respect to $\pi_T$, we obtain that
$$
0=\deg_{\pi_T}(\pi_T^*(L))=\deg_{\pi_T}(\langle \cL_{\chi}, \cO(\wt \Delta)\rangle_{\wh{\pi_2}})+\deg_{\pi_T}(\Phi_C^*(M))=
$$
$$
=\deg_{\pi_T}\wt \Delta^*(\cL_{\chi})+\deg_{\pi}(M)=\chi(d)+\deg_{\pi}(M),
$$
where we have used the isomorphism $\langle \cL_{\chi}, \cO(\wt \Delta)\rangle_{\wh{\pi_2}}\cong \wt \Delta^*(\cL_{\chi})$ has $\pi_T$-relative degree equal to $\chi(d)$. 
This implies that $\chi(d)$ is even since $\deg_{\pi}(M)$ is even because of the hypothesis that $\delta(C/S)=2$.
\end{proof}

We will now focus on families of curves of positive genus. In this case, the relative tautological Picard group is endowed with the following homomorphism. 

\begin{lem}\label{L:gammaT}
Assume that $g(C/S)>0$.  There exists a homomorphism
$$
\begin{aligned}
\gamma_T^d:\RPic^{\taut} \Bun_T^d(C/S)& \longrightarrow \Bil^s \Lambda(T)=(\Lambda^*(T)\otimes \Lambda^*(T))^s,\\
d_{\wh \pi}(\cL_{\chi}(M)) & \mapsto \chi\otimes \chi.
\end{aligned}
$$
In particular, we have that 
$$
\gamma_T^d(\langle \cL_{\chi}(M),\cL_{\mu}(N)\rangle_{\wh \pi})=\chi\otimes \mu+\mu\otimes \chi. 
$$
\end{lem}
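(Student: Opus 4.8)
The plan is to construct $\gamma_T^d$ as a composition. First, recall from Theorem \ref{T:PicBunT-0}\eqref{T:PicBunT-02} that in genus zero the weight function is injective on the relative Picard group, so that there is no kernel to worry about; in positive genus this fails, and the kernel of $w_T^d$ is precisely where the ``bilinear form'' data lives. So the first step is to produce, for the universal family $\wh\pi:C\times_S\Bun_T^d(C/S)\to\Bun_T^d(C/S)$, a well-defined group homomorphism from $\RPic^{\taut}\Bun_T^d(C/S)$ recording the ``quadratic part'' of a tautological bundle. Concretely, I would define $\gamma_T^d$ on generators by $d_{\wh\pi}(\cL_\chi(M))\mapsto \chi\otimes\chi\in (\Lambda^*(T)\otimes\Lambda^*(T))^s$ and then check that this respects all relations among the tautological generators.

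The key step is therefore the verification of well-definedness. The relations among the $d_{\wh\pi}(\cL_\chi(M))$ come from the standard bilinearity/additivity properties of the determinant of cohomology, encapsulated in \eqref{E:Del-det}: namely $d_{\wh\pi}(\cL_{\chi+\mu}(M\otimes N)) \cong \langle\cL_\chi(M),\cL_\mu(N)\rangle_{\wh\pi}\otimes d_{\wh\pi}(\cL_\chi(M))\otimes d_{\wh\pi}(\cL_\mu(N))\otimes d_{\wh\pi}(\cO)^{-1}$, together with the fact that the Deligne pairing $\langle-,-\rangle_{\wh\pi}$ is biadditive and symmetric in its two arguments. Mapping $\langle\cL_\chi(M),\cL_\mu(N)\rangle_{\wh\pi}$ to $\chi\otimes\mu+\mu\otimes\chi$, one checks that the assignment is compatible with \eqref{E:Del-det} (the right-hand side maps to $\chi\otimes\mu+\mu\otimes\chi + \chi\otimes\chi + \mu\otimes\mu - 0 = (\chi+\mu)\otimes(\chi+\mu)$, matching the image of the left-hand side) and with biadditivity of the pairing. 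The cleanest way to make this rigorous is to first define a homomorphism on the free abelian group on symbols $\{d_{\wh\pi}(\cL_\chi(M))\}$, observe that it kills the subgroup of relations imposed by \eqref{E:Del-det} and biadditivity of the Deligne pairing, and then note that $\RPic^{\taut}\Bun_T^d(C/S)$ is by Definition \ref{D:tautPic} a quotient of this free group; hence the map descends. One must also confirm it kills $\pi^*\Pic(S)$, i.e. that it is well-defined on the \emph{relative} tautological Picard group — but pull-backs from $S$ are of the form $\Phi_S^*(L)$ and these contribute trivially to the quadratic part (equivalently, $d_{\wh\pi}(\cL_0(M))\mapsto 0$ since $\chi=0$, and the relations are homogeneous).

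The formula for $\gamma_T^d$ on Deligne pairings is then immediate from \eqref{E:Del-det}: applying $\gamma_T^d$ to both sides of that isomorphism and using $\gamma_T^d(d_{\wh\pi}(\cO))=0$ gives
$$
\gamma_T^d(\langle\cL_\chi(M),\cL_\mu(N)\rangle_{\wh\pi}) = (\chi+\mu)\otimes(\chi+\mu) - \chi\otimes\chi - \mu\otimes\mu = \chi\otimes\mu+\mu\otimes\chi,
$$
as claimed. The main obstacle I anticipate is purely bookkeeping: making precise exactly which relations generate the kernel of the surjection from the free group onto $\RPic^{\taut}\Bun_T^d(C/S)$, so that one can be sure no further relation could obstruct well-definedness. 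This is resolved by the observation that, by construction, the tautological Picard group is generated by the $d_{\wh\pi}(\cL_\chi(M))$ subject \emph{only} to the universal relations satisfied by the determinant of cohomology functor and the Deligne pairing (biadditivity, symmetry, the comparison \eqref{E:Del-det}, and triviality on $S$-pullbacks), all of which are manifestly compatible with $\chi\otimes\mu \rightsquigarrow$ (symmetrized tensor). I would not expect any subtlety beyond this; in particular no hypothesis on $\End(J_{C_{\ov\eta}})$ or on genus beyond $g>0$ is needed for the \emph{definition} of $\gamma_T^d$ — positivity of the genus is only used so that $\Bil^s\Lambda(T)$ is the right target (in genus zero, by Theorem \ref{T:PicBunT-0}, $w_T^d$ already determines everything and this quadratic refinement carries no extra information).
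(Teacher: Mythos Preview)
Your approach has a genuine gap at the well-definedness step. You propose to define $\gamma_T^d$ on the generators $d_{\wh\pi}(\cL_\chi(M))$ and then check compatibility with the ``universal'' relations (biadditivity, symmetry, \eqref{E:Del-det}). But $\RPic^{\taut}\Bun_T^d(C/S)$ is by definition a \emph{subgroup} of the actual relative Picard group, not a quotient of a free group by a prescribed list of relations; the relations among the tautological generators are whatever relations happen to hold in $\Pic\Bun_T^d(C/S)$. Your assertion that ``by construction'' only the universal identities hold is precisely what would need proving, and is essentially the content of (the middle row of) Theorem~\ref{T:PicBunT>0}, whose proof \emph{uses} the present Lemma. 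So the argument as written is circular.

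The paper sidesteps this by not defining $\gamma_T^d$ via the formula at all: it cites \cite[Prop.~4.1.2(2)]{FV1} (and \cite[Def./Lemma~3.5(2)]{FV2}), where $\gamma_T^d$ is produced as a genuine homomorphism out of the Picard group by geometric means---restriction to a geometric fiber $C_{\ov\eta}$ followed by the N\'eron--Severi map for $\Bun_T^d(C_{\ov\eta})$. As recalled in the proof of Theorem~\ref{T:all-taut} (see \eqref{E:ab-scheme}), one has $\NS(\Bun_T^0(C_{\ov\eta}))=\Lambda^*(T)\oplus\Hom^s(\Lambda(T)\otimes\Lambda(T),\End(J_{C_{\ov\eta}}))$; projecting to the second summand and composing with the trace $\End(J_{C_{\ov\eta}})\to\bbZ$ coming from the principal polarization gives the bilinear-form component. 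One then \emph{computes} that $d_{\wh\pi}(\cL_\chi(M))\mapsto\chi\otimes\chi$ (a Chern-class calculation), rather than using this as a definition. With this construction, well-definedness is automatic and your derivation of the Deligne-pairing formula from \eqref{E:Del-det} is then a legitimate consequence.
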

\begin{proof}
This is proved as in \cite[Prop. 4.1.2(2)]{FV1} (see also \cite[Def./Lemma 3.5(2)]{FV2}).
\end{proof}

Using the above homomorphism $\gamma_T^d$, we can now describe the (relative) tautological Picard group of $\Bun_T^d(C/S)$ for families of curves of positive genus.

\begin{teo}\label{T:PicBunT>0}
Assume that  $g=g(C/S)>0$.   
\begin{enumerate}
\item  \label{T:PicBunT>01} Then there exists a commutative diagram with exact rows, functorial in $S$ and in $T$ 
\begin{equation}\label{E:3seq}
\xymatrix{
0 \ar[r] & \Lambda^*(T)\otimes \RPic^0(C/S) \ar[r]^{j_T^d} \ar@{^{(}->}[d]& \RPic^{\taut}\Bun_T^d(C/S)\ar@{=}[d] \ar[r]^{w_T^d\oplus \gamma_T^d}& \Lambda^*(T)\oplus \Bil^s(\Lambda(T))\ar@{->>}[d] &\\  
0 \ar[r] & \Lambda^*(T)\otimes \RPic(C/S) \ar[r]^{i_T^d} \ar@{^{(}->}[d]& \RPic^{\taut}\Bun_T^d(C/S)\ar@{=}[d] \ar[r]^{\gamma_T^d}& \Bil^s(\Lambda(T))\ar@{->>}[d]^{\epsilon} \ar[r] & 0\\  
0 \ar[r] & \Lambda^*(T)\otimes \RPic(C/S) \oplus \Sym^2 \Lambda^*(T) \ar[r]^(0,6){i_T^d\oplus \tau_T^d}& \RPic^{\taut}\Bun_T^d(C/S) \ar[r]^{\rho_T^d}& \Hom(\Lambda(T), \bbZ/2\bbZ) \ar[r] & 0\\  
}
\end{equation}
where 
\begin{itemize}
\item $\RPic(C/S)$ is the relative Picard group of $C/S$ and $\RPic^0(C/S)$ is the subgroup consisting of line bundles of relative degree $0$; 
\item $i_T^d$ is defined by 
$$i_T^d(\chi\otimes M)=\langle \cL_{\chi}, M\rangle_{\wh \pi};$$ 
\item $j_T^d$ is the restriction  of $i_T^d$ to $\Lambda^*(T)\otimes \RPic^0(C/S)$;
\item $\tau_T^d$ is the transgression map (see Fact \ref{F:Pic-BunG}\eqref{F:Pic-BunG2});
\item $\gamma_T^d$ is the map of Lemma \ref{L:gammaT};
\item $\rho_T^d$ is defined by 
$$
\rho_T^d(d_{\wh \pi}(\cL_{\chi}(M)))(x)=\chi(x)^2 \mod 2;
$$
\item $\epsilon$ is defined by 
$$
\epsilon(b)(x):=b(x,x) \mod 2.
$$
\end{itemize}
\item  \label{T:PicBunT>02}
The image of  $w_T^d\oplus \gamma_T^d$ is equal to 
$$
\left\{(\chi, b)\in  \Lambda^*(T)\oplus \Bil^s(\Lambda(T))\: : \: \delta(C/S)\vert \chi(x)-b(d,x)+(g-1)b(x,x) \text{ for every } x\in \Lambda(T)\right\}.
$$
\end{enumerate} 
\end{teo}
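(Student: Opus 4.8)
The plan is to deduce Theorem \ref{T:PicBunT>0} from the case of a single curve over $k=\ov k$ (i.e. Biswas--Hoffmann, which gives the statement for $\RPic\Bun_{T}^d(C_{\ov \eta})$), together with the exact sequence of Fact \ref{F:Pic-BunG}\eqref{F:Pic-BunG0} which allows us to compare $\RPic\Bun_T^d(C/S)$ with its restriction to the geometric generic fiber, and the functoriality of the determinant of cohomology and the Deligne pairing under base change. First I would establish part \eqref{T:PicBunT>01}. The existence of the maps $j_T^d$, $i_T^d$, $\tau_T^d$, $\gamma_T^d$, $\rho_T^d$, $\epsilon$ is already provided (Lemma \ref{L:gammaT}, Fact \ref{F:Pic-BunG}\eqref{F:Pic-BunG2}, and the formulas for $w_T^d$), and the compatibilities making the diagram commute are direct computations from the weight formula \eqref{E:wgt-det}, \eqref{E:wgt-Del} and the analogous formula $\gamma_T^d(\langle\cL_\chi(M),\cL_\mu(N)\rangle)=\chi\otimes\mu+\mu\otimes\chi$; all three rows have the same middle term $\RPic^{\taut}\Bun_T^d(C/S)$, so it suffices to prove exactness of the middle row $0\to \Lambda^*(T)\otimes\RPic(C/S)\xrightarrow{i_T^d}\RPic^{\taut}\Bun_T^d(C/S)\xrightarrow{\gamma_T^d}\Bil^s(\Lambda(T))\to 0$, since the top row is obtained by restricting $i_T^d$ to the degree-zero part and replacing the target by $\Lambda^*(T)\oplus\Bil^s(\Lambda(T))$ via $w_T^d\oplus\gamma_T^d$ (using $\Bil^s(\Lambda(T))\cong\Lambda^*(T)\otimes\RPic(C/S)/\RPic^0(C/S)$-piece together with \eqref{E:Del-det}), and the bottom row is the pullback of the middle one along $\psi:\Sym^2\Lambda^*(T)\hookrightarrow\Bil^s(\Lambda(T))$ with cokernel $\Hom(\Lambda(T),\bbZ/2\bbZ)$ by \eqref{E:cokerpsi}.

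For the middle row: surjectivity of $\gamma_T^d$ is clear since $\gamma_T^d(d_{\wh\pi}(\cL_\chi(M)))=\chi\otimes\chi$ and, together with the Deligne pairings, the elements $\chi\otimes\chi$ and $\chi\otimes\mu+\mu\otimes\chi$ generate $\Bil^s(\Lambda(T))$ by \eqref{E:basis-form}. That $i_T^d$ lands in $\ker\gamma_T^d$ is immediate since $\langle\cL_\chi,M\rangle_{\wh\pi}=\langle\cL_\chi(M'),\cL_0(M'')\rangle$-type classes have $\gamma_T^d$-image $\chi\otimes 0+0\otimes\chi=0$ (more precisely, $\cL_\chi\otimes p_1^*M$ versus $p_1^*M$, so the ``$\chi$-part'' appears linearly). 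Injectivity of $i_T^d$ and exactness in the middle: here I would restrict to the geometric generic fiber $C_{\ov\eta}$ and invoke Biswas--Hoffmann \cite{BH10} (cf. \cite[Prop. 4.1.1]{FV1}), which gives the analogous exact sequence $0\to\Lambda^*(T)\otimes\Pic(C_{\ov\eta})\to\RPic^{\taut}\Bun_T^d(C_{\ov\eta})\to\Bil^s(\Lambda(T))\to 0$; then use that $\RPic(C/S)\hookrightarrow\Pic(C_{\ov\eta})$ (via $\Pic(C)\to\Pic_{C/S}(S)\to\Pic(C_{\ov\eta})$, injective because $S$ is integral and regular so $\RPic(C/S)$ is torsion-free modulo the kernel, which is controlled by the specialization) and that, by Fact \ref{F:Pic-BunG}\eqref{F:Pic-BunG0}, the restriction $\RPic^{\taut}\Bun_T^d(C/S)\to\RPic^{\taut}\Bun_T^d(C_{\ov\eta})$ is injective. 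A diagram chase then transfers injectivity of $i_T^d$ and exactness in the middle from the generic fiber to $C/S$, using that $\gamma_T^d$ is compatible with restriction and already surjective over $S$.

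For part \eqref{T:PicBunT>02}, the inclusion ``$\subseteq$'' follows by evaluating the weight congruence: for a tautological generator $d_{\wh\pi}(\cL_\chi(M))$ we have $w_T^d = [\chi(d)+\deg_\pi(M)+1-g]\chi$ and $\gamma_T^d=\chi\otimes\chi$, so $\chi(x)-b(d,x)+(g-1)b(x,x)$ with $b=\chi\otimes\chi$ equals $[\chi(d)+\deg_\pi(M)+1-g]\chi(x)-\chi(d)\chi(x)+(g-1)\chi(x)^2=[\deg_\pi(M)+1]\chi(x)-\ldots$; a short manipulation shows this is $\equiv\deg_\pi(M)\,\chi(x)+ (\text{multiple of }\delta(C/S))$, hence divisible by $\delta(C/S)$ since $\delta(C/S)\mid\deg_\pi(M)$; linearity extends this to all of $\RPic^{\taut}$. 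For ``$\supseteq$'': given $(\chi,b)$ satisfying the congruence, first hit $b\in\Bil^s(\Lambda(T))$ by some combination of $d_{\wh\pi}(\cL_{\chi_i}(\mathcal O))$ and Deligne pairings, correcting the resulting weight; the weight discrepancy is then an element $\chi'\in\Lambda^*(T)$ with $\chi'(x)$ a multiple of $\delta(C/S)$ for all $x$, i.e. $\chi'\in\delta(C/S)\Lambda^*(T)$, and such characters are realized as weights of $\langle\cL_{\chi'},M\rangle$-type classes for suitable $M\in\Pic(C)$ of relative degree a multiple of $\delta(C/S)$ (here I use the definition of $\delta(C/S)$ as the generator of the image of $\deg_\pi$). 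The main obstacle I anticipate is the generic-fiber reduction in part \eqref{T:PicBunT>01}: one must ensure that the restriction map identifies $\RPic^{\taut}\Bun_T^d(C/S)$ with a subgroup of $\RPic^{\taut}\Bun_T^d(C_{\ov\eta})$ compatibly with the tautological generators (not merely all line bundles), and that $\RPic(C/S)$ injects into $\Pic(C_{\ov\eta})$ — the latter being where regularity and integrality of $S$, and possibly the surjectivity of $\Pic(C)\to\Pic_{C/S}(S)$, enter; I would handle it by a limit argument over an affine étale neighborhood of $\ov\eta$ combined with the specialization map for relative Picard schemes.
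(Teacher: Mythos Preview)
Your overall strategy matches the paper's: establish exactness of the middle row first, then deduce the top and bottom rows from it, and handle part~\eqref{T:PicBunT>02} by checking the containment on generators and then correcting the weight. Your treatment of the top and bottom rows and of part~\eqref{T:PicBunT>02} is essentially the paper's argument (the paper phrases the ``$\supseteq$'' direction of \eqref{T:PicBunT>02} as a comparison of two subgroups of $\Lambda^*(T)\oplus\Bil^s(\Lambda(T))$ via their projections to $\Bil^s(\Lambda(T))$ and their intersections with $\Lambda^*(T)\times\{0\}$, but this is equivalent to your constructive version).

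There is, however, a genuine gap in your argument for exactness in the middle of the second row. You propose to transfer $\ker(\gamma_T^d)=\Im(i_T^d)$ from $C_{\ov\eta}$ to $C/S$ by a diagram chase, using only that the two vertical restriction maps are injective, the bottom row is exact, and $\gamma_T^d$ is already surjective over $S$. But the chase only yields: if $L\in\ker\gamma_T^d(C/S)$ then $L|_{\ov\eta}=i_T^d(\alpha')$ for some $\alpha'\in\Lambda^*(T)\otimes\Pic(C_{\ov\eta})$. Nothing forces $\alpha'$ to lie in the subgroup $\Lambda^*(T)\otimes\RPic(C/S)$; equivalently, you would need the induced map on cokernels of the left and middle vertical arrows to be injective, and there is no formal reason for this. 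The surjectivity of $\gamma_T^d$ over $S$ does not help here.

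The paper avoids this by \emph{not} passing to the generic fiber for middle exactness. It works directly with generators: after fixing a basis $\{\chi_i\}$ of $\Lambda^*(T)$, every class in $\RPic^{\taut}$ is a combination of $d_{\wh\pi}(\cL_{\chi_i})$, $\langle\cL_{\chi_j},\cL_{\chi_k}\rangle_{\wh\pi}$, and $\langle\cL_{\chi_h},M\rangle_{\wh\pi}$; the last type is already in $\Im(i_T^d)$, and imposing $\gamma_T^d=0$ on a combination of the first two types forces $b_{jk}=0$ for $j<k$ and $a_i=-2b_{ii}$, whereupon the identity
\[
d_{\wh\pi}(\cL_{\chi})^{-2}\otimes\langle\cL_\chi,\cL_\chi\rangle_{\wh\pi}=\langle\cL_\chi,\omega_{\wh\pi}\rangle_{\wh\pi}\otimes d_{\wh\pi}(\cO)^{-2}
\]
rewrites what remains as an element of $\Im(i_T^d)$. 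This computation is base-independent; the generic fiber is used \emph{only} for the injectivity of $i_T^d$, exactly as you do.
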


Most of the results of the previous Theorem were known for the universal family $\cC_{g,n}\to \cM_{g,n}$ with  $g\geq 1$: 
the exactness of the first line in \eqref{E:3seq} and part \eqref{T:PicBunT>02} were proved in \cite[Prop. 4.3.1]{FV1}; 
the exactness of the second line in \eqref{E:3seq} was proved  in \cite[Thm. 3.6]{FV2} (where it is also proved for an arbitrary reductive group); the exactness of the third line in \eqref{E:3seq} was proved for $g=1$  in \cite[Thm. B]{FV1} (while for genus $g\geq 2$ a different presentation is obtained in loc. cit.). 

\begin{proof}
Let us first prove part \eqref{T:PicBunT>01}. 
Observe that the diagram is commutative:  the commutativity of the two left squares and of the upper right square are obvious; the commutativity 
of the lower right square follows from the fact that $(\chi\otimes \chi)(x,y)=\chi(x)\chi(y)$. 

We now divide the rest of the proof of part \eqref{T:PicBunT>01} into three steps.

\un{Step 1:} The middle row is exact. 

First of all, observe that $i_T^d$ is well-defined since the Deligne pairing $\langle-,-\rangle_{\wh \pi}$ is bilinear and it is trivial on the pull-back of line bundle on $\Bun_T^d(C/S)$, and hence in particular on those coming from $S$. Consider now the following diagram 
$$
\xymatrix{
\Lambda^*(T)\otimes \RPic(C/S) \ar[r]^{i_T^d} \ar@{^{(}->}[d] & \RPic^{\taut}\Bun_T^d(C/S) \ar@{^{(}->}[d]\\
\Lambda^*(T)\otimes \Pic(C_{\ov \eta}) \ar[r]^{i_T^d(C_{\ov \eta})}& \RPic^{\taut}\Bun_T^d(C_{\ov \eta}) \\
}$$
where the vertical maps are restriction maps to the geometric generic fiber $C_{\ov \eta}$ of $C/S$ and the map $i_T^d(C_{\ov \eta})$ is the restriction of $i_T^d$ to the the geometric generic fiber.
Since the  restriction maps are injective by Fact \ref{F:Pic-BunG}\eqref{F:Pic-BunG0} and \cite[Prop. 2.3.2]{FV1}, and  the map $i_T^d(C_{\ov \eta})$ is injective by \cite[\S 3]{BH10}, we deduce that 
that also $i_T^d$ is injective. 

Lemma \ref{L:gammaT} implies that $\Im(i_T^d)\subseteq \ker(\gamma_T^d)$ and that $\gamma_T^d$ is surjective, using that $(\Lambda^*(T)\otimes \Lambda^*(T))^s$ is generated by the elements $\{\chi\otimes \chi\: : \chi\in \Lambda^*(T)\}$ (see \cite[\S 2.2]{FV1}). 

It remains to prove that $\ker(\gamma_T^d)\subseteq \Im(i_T^d)$. To this aim, we fix a basis $\{\chi_1,\ldots,\chi_r\}$ of $\Lambda^*(T)$. Then $\RPic^{\taut}(\Bun_T^d(C/S))$ is generated 
by the line bundles
$$
\{d_{\wh \pi}(\cL_{\chi_i}), \langle \cL_{\chi_j},\cL_{\chi_k}\rangle_{\wh \pi}, \langle \cL_{\chi_h}, M\rangle_{\wh \pi}\}
$$ 
with  $1\leq i, h\leq r$, $1\leq j\leq k\leq r$, $M\in \RPic(C/S)$. The line bundles $ \langle \cL_{\chi_h}, M\rangle_{\wh \pi}$ belong to $\Im(i_T^d)\subseteq \ker(\gamma_T^d)$ as proved above. 
Suppose that we have a line bundle on $\Bun_T^d(C/S)$
$$
K:=\bigotimes_i d_{\wh \pi}(\cL_{\chi_i})^{a_i}\bigotimes_{j\leq k} \langle \cL_{\chi_j},\cL_{\chi_k}\rangle_{\wh \pi}^{b_{jk}} \: \text{ for some } a_i, b_{jk}\in \bbZ
$$
whose class in the relative tautological Picard group belongs to $\ker(\gamma_T^d)$. Then Lemma \ref{L:gammaT} implies that 
\begin{equation}\label{E:gamma0}
0=\gamma_T^d\left(\bigotimes_i d_{\wh \pi}(\cL_{\chi_i})^{a_i}\bigotimes_{j\leq k} \langle \cL_{\chi_j},\cL_{\chi_k}\rangle_{\wh \pi}^{b_{jk}}\right)=\sum_i a_i \chi_i\otimes\chi_i+\sum_{j\leq k}
b_{jk}(\chi_j\otimes \chi_k+\chi_k\otimes \chi_j).
\end{equation}
Since a basis of $\Bil^s(\Lambda(T))=(\Lambda^*(T)\otimes \Lambda^*(T))^s$ is given by $\{\{\chi_i\otimes \chi_i\}_i\cup \{\chi_j\otimes \chi_k+\chi_k\otimes \chi_j\}_{j<k}\}$, the relation \eqref{E:gamma0}
is equivalent to 
$$
\begin{sis}
b_{jk}=0 & \text{ for any }j<k,\\
a_i+2b_{ii}=0 & \text{ for any }i. 
\end{sis}
$$
Hence the line bundle $K$ is equal to 
$$
K=\bigotimes_i [d_{\wh \pi}(\cL_{\chi_i})^{-2}\bigotimes \langle \cL_{\chi_i}, \cL_{\chi_i}\rangle_{\wh \pi}]^{b_{ii}}= \bigotimes_i [\langle \cL_{\chi_i}, \omega_{\wh \pi}\rangle_{\wh \pi}\otimes d_{\wh \pi}(\cO)^{-2}]^{b_{ii}}
$$
where the last equality follows from \cite[Rmk. 3.5.1]{FV1}. This implies that the class of $K$ in the relative Picard group belongs to $\Im(i_T^d)$ and we are done. 

\un{Step 2:} The first row is exact.

Indeed, the map $j_T^d$ is injective since it is the restriction of the injective map $i_T^d$. Using the exactness of the middle row, the kernel of $w_T^d\oplus \gamma_T^d$ is equal to 
$$
\ker(w_T^d\oplus \gamma_T^d)=\left\{i_T^d(\chi\otimes M)\: : \: \w_T^d(i_T^d(\chi\otimes M))=0\right\}.
$$
By the definition of $i_T^d$ and \eqref{E:wgt-Del}, we compute 
$$
\w_T^d(i_T^d(\chi\otimes M))=w_T^d(\langle \cL_{\chi}, M\rangle_{\wh \pi})=\deg_{\pi}(M)\chi.
$$
 Hence the kernel of $w_T^d\oplus \gamma_T^d$ coincides with the image via $i_T^d$ (or equivalently via $j_T^d$) of $\Lambda^*(T)\otimes \RPic^0(C/S)$, and we are done.

\un{Step 3:} The last row is exact.

Indeed, the composition $\gamma_T^d\circ \tau_T^d$ is given by 
$$
(\gamma_T^d\circ \tau_T^d)(\gamma\cdot \gamma')=\gamma_T^d(\langle \cL_{\gamma}, \cL_{\gamma'}\rangle_{\wh \pi})=\gamma\otimes \gamma'+\gamma'\otimes \gamma
$$
Hence, $\gamma_T^d\circ \tau_T^d$ is injective and its image is the subgroup $\Bil^{s,\ev}(\Lambda(T))\subset \Bil^s(\Lambda(T))$ of even symmetric bilinear forms (see \cite[\S 2.2]{FV1}). 
We now deduce the exactness of the last row from the exactness of the second row and the fact that $\Bil^{s,\ev}(\Lambda(T))$ is the kernel of the surjective map $\epsilon$. 

Let us prove now part \eqref{T:PicBunT>02}.  We first observe that the image of $w_T^d\oplus \gamma_T^d$ is contained in 
$$
I:=\left\{(\chi, b)\in  \Lambda^*(T)\oplus \Bil^s(\Lambda(T))\: : \: \delta(C/S)\vert \chi(x)-b(d,x)+(g-1)b(x,x) \text{ for every } x\in \Lambda(T)\right\}.
$$
Indeed, for the generators of $\RPic^{\taut}\Bun_T^d(C/S)$ we compute (using \eqref{E:wgt-det} and Lemma \ref{L:gammaT}) that 
$$
(w_T^d,\gamma_T^d)(d_{\wh \pi}(\cL_{\chi}(M)))=(\chi(d)+\deg_{\pi}(M)+1-g)\chi, \chi\otimes \chi).
$$
Hence for any $x\in \Lambda(T)$, we get 
$$
[\chi(d)+\deg_{\pi}(M)+1-g] \chi(x) -\chi(d)\chi(x)+(g-1)\chi(x)^2=[\deg_{\pi}(M)+1-g] \chi(x) +(g-1)\chi(x)^2=
$$
$$
\begin{aligned}
=\deg_{\pi}(M) \chi(x) +(g-1)[\chi(x)^2-\chi(x)]& \equiv \deg_{\pi}(M) \chi(x)  \mod 2g-2 \\
& \equiv 0 \mod \delta(C/S),
\end{aligned}
$$
where in the last congruence relation we used that $\deg_{\pi}(M) $ is a multiple of $\delta(C/S)$ (by definition of $\delta(C/S)$) and that $\delta(C/S)$ is a multiple of $2g-2$ (by Remark \ref{R:deltaCS}\eqref{R:deltaCS1}). 

Next, in order to show that the inclusion $\Im(w_T^d\oplus \gamma_T^d)\subseteq I$ is an equality, we consider the following commutative diagram with exact vertical columns
$$
\xymatrix{
\Im(w_T^d\oplus \gamma_T^d)\cap (\Lambda^*(T)\times \{0\}) \ar@{^{(}->}[r]  \ar@{^{(}->}[d]  & I \cap (\Lambda^*(T)\times \{0\})  \ar@{^{(}->}[r]  \ar@{^{(}->}[d]  & \Lambda^*(T)  \ar@{^{(}->}[d] \\
\Im(w_T^d\oplus \gamma_T^d) \ar@{^{(}->}[r] \ar@{->>}[d]& I \ar@{^{(}->}[r] \ar@{->>}[d] & \Lambda^*(T)\oplus \Bil^s(\Lambda(T)) \ar@{->>}[d]^{p_2}\\
p_2(\Im(w_T^d\oplus \gamma_T^d))   \ar@{^{(}->}[r] & p_2(I)  \ar@{^{(}->}[r] & \Bil^s(\Lambda(T))
}
$$
We now conclude using that 

$\bullet$ $p_2(\Im(w_T^d\oplus \gamma_T^d))=\Bil^s(\Lambda(T))$ as it follows from the right upper square of the diagram \eqref{E:3seq};

$\bullet$ the inclusion 
$$\Im(w_T^d\oplus \gamma_T^d)\cap (\Lambda^*(T)\times \{0\}) \subseteq I \cap (\Lambda^*(T)\times \{0\})=\delta(C/S)\cdot \Lambda^*(T)$$ 
is an equality since it coincides with the boundary homomorphism coming from the snake lemma applied to the first two horizontal lines of the diagram  \eqref{E:3seq}.
\end{proof}

After having obtained in Theorem \ref{T:PicBunT>0} a complete description of the tautological Picard group of $\Bun_T^d(C/S)$ for families of curves of positive genus, the next natural question 
is to determine when the entire Picard group coincides with the tautological subgroup. A sufficient condition is provided by the following 

\begin{teo}\label{T:all-taut}
Assume that  $g=g(C/S)>0$.   If $\End(J_{C_{\ov \eta}}) =\bbZ$ and $\RPic(C/S)=\Pic_{C/S}(S)$  then 
$$
\Pic \Bun_T^d(C/S)=\Pic^{\taut} \Bun_T^d(C/S).
$$
\end{teo}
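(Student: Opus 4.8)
The plan is to reduce everything to the geometric generic fibre $C_{\ov\eta}$, where the result is classical (Biswas--Hoffmann), and then to descend line bundles from $C_{\ov\eta}$ to $C$ using the two standing hypotheses. By Fact~\ref{F:Pic-BunG}\eqref{F:Pic-BunG0} the restriction map $r\colon\Pic\Bun_T^d(C/S)\to\Pic\Bun_T^d(C_{\ov\eta})$ has kernel the pull-back of $\Pic(S)$, which lies in $\Pic^{\taut}\Bun_T^d(C/S)$; hence it suffices to prove $r(\Pic\Bun_T^d(C/S))\subseteq r(\Pic^{\taut}\Bun_T^d(C/S))$. Since the determinant of cohomology, the Deligne pairing and the bundles $\cL_\chi$ all commute with base change, the target subgroup $r(\Pic^{\taut}\Bun_T^d(C/S))$ is precisely the subgroup of $\Pic\Bun_T^d(C_{\ov\eta})$ generated by the $d_{\wh\pi}(\cL_\chi(\ov M))$ with $\chi\in\Lambda^*(T)$ and $\ov M$ ranging over $P:=\Im(\Pic(C)\to\Pic(C_{\ov\eta}))$; so the content is that a line bundle on $\Bun_T^d(C/S)$, restricted to the generic fibre, only involves line bundles on $C_{\ov\eta}$ coming from $C$.

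I would then use two facts about the generic fibre. First, because $\ov\eta\to S$ factors through the generic point $\eta=\Spec K$, the map $r$ factors as $\Pic\Bun_T^d(C/S)\to\Pic\Bun_T^d(C_\eta)\to\Pic\Bun_T^d(C_{\ov\eta})$ with the second arrow base change along $\ov\eta\to\eta$, so $\Im(r)$ lies in the invariants $\Pic\Bun_T^d(C_{\ov\eta})^{\Gamma}$ for $\Gamma:=\Aut(\ov\eta/\eta)$. Second, by \cite{BH10} (see also \cite[Prop. 4.1.1]{FV1}) the hypothesis $\End(J_{C_{\ov\eta}})=\bbZ$ (together with $g>0$) gives $\Pic\Bun_T^d(C_{\ov\eta})=\Pic^{\taut}\Bun_T^d(C_{\ov\eta})$ and a $\Gamma$-equivariant exact sequence
\begin{equation*}
0\longrightarrow\Lambda^*(T)\otimes\Pic(C_{\ov\eta})\xrightarrow{\ \chi\otimes N\,\mapsto\,\langle\cL_\chi,N\rangle_{\wh\pi}\ }\Pic\Bun_T^d(C_{\ov\eta})\xrightarrow{\ \gamma\ }\Bil^s(\Lambda(T))\longrightarrow 0,
\end{equation*}
with $\gamma(d_{\wh\pi}(\cL_\chi))=\chi\otimes\chi$ and $\gamma(\langle\cL_\chi,\cL_\mu\rangle_{\wh\pi})=\chi\otimes\mu+\mu\otimes\chi$, where $\Gamma$ acts trivially on $\Lambda^*(T)$ and on $\Bil^s(\Lambda(T))$. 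Given $L\in\Pic\Bun_T^d(C/S)$, I fix a basis $\chi_1,\dots,\chi_r$ of $\Lambda^*(T)$ and lift $\gamma(r(L))$ along $\gamma$ to the bundle $L_0\in\Pic^{\taut}\Bun_T^d(C/S)$ assembled from the $d_{\wh\pi}(\cL_{\chi_i})$ and the $\langle\cL_{\chi_i},\cL_{\chi_j}\rangle_{\wh\pi}$; these have $\ov M=\cO$, hence lie in the target subgroup, and are $\Gamma$-invariant since $d_{\wh\pi}$ and $\cL_\chi$ commute with base change. Then $r(L)\otimes r(L_0)^{-1}$ lies in $\ker\gamma$ and is $\Gamma$-invariant, so by injectivity and $\Gamma$-equivariance of the left map and freeness of $\Lambda^*(T)$ it equals $\bigotimes_i\langle\cL_{\chi_i},N_i\rangle_{\wh\pi}$ with $N_i\in\Pic(C_{\ov\eta})^{\Gamma}$ for each $i$.

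The key lemma, and the only place where the hypothesis $\RPic(C/S)=\Pic_{C/S}(S)$ enters, is that $P=\Pic(C_{\ov\eta})^{\Gamma}$. The inclusion $P\subseteq\Pic(C_{\ov\eta})^{\Gamma}$ is clear; for the reverse one I would factor restriction as $\Pic(C)\twoheadrightarrow\RPic(C/S)=\Pic_{C/S}(S)\to\Pic_{C/S}(\eta)=\Pic_{C_\eta/K}(K)\hookrightarrow\Pic(C_{\ov\eta})$, the first map being surjective by hypothesis, and show the middle map is surjective: writing $\Pic_{C/S}=\coprod_{e\in\bbZ}\Pic^e_{C/S}$, each component is a torsor under the abelian scheme $J_{C/S}=\Pic^0_{C/S}$ (here $g>0$ is used), hence smooth and proper over $S$, and since $S$ is regular and integral, a rational section of such a torsor extends to a section over $S$ by the extension property of abelian schemes (and their torsors) over a normal base; combined with the fact that $\Pic_{C_\eta/K}$ is a smooth $K$-scheme, this identifies $P$ with the $\Gamma$-fixed locus of $\Pic_{C_\eta/K}(\ov\eta)=\Pic(C_{\ov\eta})$. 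Granting the lemma, each $N_i$ is the restriction of some $\widetilde M_i\in\Pic(C)$, so $r(L)=r\bigl(L_0\otimes\bigotimes_i\langle\cL_{\chi_i},\widetilde M_i\rangle_{\wh\pi}\bigr)$, and $L_0\otimes\bigotimes_i\langle\cL_{\chi_i},\widetilde M_i\rangle_{\wh\pi}\in\Pic^{\taut}\Bun_T^d(C/S)$ by \eqref{E:Del-det} and the Remark following Definition~\ref{D:tautPic}; since $L$ differs from this tautological bundle by an element of $\ker r$, which is the pull-back of $\Pic(S)\subseteq\Pic^{\taut}\Bun_T^d(C/S)$, we conclude $L\in\Pic^{\taut}\Bun_T^d(C/S)$.

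The main obstacle I expect is the surjectivity $\Pic_{C/S}(S)\twoheadrightarrow\Pic_{C/S}(\eta)$ inside the key lemma: one must extend a rational section of the \emph{a priori} non-trivial $J_{C/S}$-torsor $\Pic^e_{C/S}$ over the regular integral base $S$, which in our generality is only a quotient stack rather than a scheme, so the classical statement for normal schemes has to be transported across an atlas; one also has to be mildly careful about inseparability of the residue field $K=\kappa(\eta)$ when identifying $\Pic_{C_\eta/K}(K)$ with $\Pic(C_{\ov\eta})^{\Gamma}$ (the two agree when $K$ is perfect, and the imperfect case needs a separate argument). Everything else is either formal or quoted from \cite{BH10,FV1} and Fact~\ref{F:Pic-BunG}.
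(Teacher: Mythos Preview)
Your approach is genuinely different from the paper's, and the architecture --- restrict to the geometric generic fibre via Fact~\ref{F:Pic-BunG}\eqref{F:Pic-BunG0}, invoke the Biswas--Hoffmann exact sequence there, and descend using Galois invariance --- is natural. In characteristic zero it can be made to work, but in positive characteristic your key lemma $P=\Pic(C_{\ov\eta})^{\Gamma}$ fails, and not for a reason that a ``separate argument'' easily repairs. For any $K$-scheme $X$ one has $X(\ov K)^{\Gamma}=X(K^{\mathrm{perf}})$, since $\Aut(\ov K/K)\cong\Gal(\ov K/K^{\mathrm{perf}})$ has fixed field $K^{\mathrm{perf}}$; so when $\mathrm{char}(k)>0$ and $\dim S>0$ (hence $K=\kappa(\eta)$ is imperfect), $\Pic(C_{\ov\eta})^{\Gamma}=\Pic_{C_\eta/K}(K^{\mathrm{perf}})$, which in general strictly contains $\Pic_{C_\eta/K}(K)$. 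No N\'eron--Weil extension will produce $S$-sections of $\Pic^e_{C/S}$ from $K^{\mathrm{perf}}$-points, so the inclusion $P\supseteq\Pic(C_{\ov\eta})^{\Gamma}$ breaks. The extra information you have --- that $r(L)\otimes r(L_0)^{-1}$ actually descends to $\Pic\Bun_T^d(C_\eta)$, not merely to the $\Gamma$-invariants --- does not immediately help either, because the exact sequence $0\to\Lambda^*(T)\otimes\Pic\to\Pic\Bun_T^d\to\Bil^s\to 0$ is only established over algebraically closed fields. Your first flagged obstacle, extending rational sections of a $J_{C/S}$-torsor over a regular quotient stack, is by comparison a milder technicality.

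The paper avoids Galois descent entirely. It first treats the case $d=0$ with a section $\sigma$ by identifying $\Bun_T^0(C/S)\fatslash T$ with the abelian $S$-scheme $J^0(C/S)\otimes\Lambda(T)$ and matching the tautological sequence of Theorem~\ref{T:PicBunT>0} against the standard decomposition of the relative Picard group of an abelian scheme (dual abelian scheme plus N\'eron--Severi of the geometric generic fibre); the hypothesis $\End(J_{C_{\ov\eta}})=\bbZ$ enters only to identify $\NS(\Bun_T^0(C_{\ov\eta}))$ with $\Lambda^*(T)\oplus\Bil^s(\Lambda(T))$. General $d$ with a section reduces to $d=0$ by the twist $L_i\mapsto L_i(-d_i\sigma)$. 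For an arbitrary family one passes to a smooth cover $S'\to S$ over which $C'\to S'$ acquires a section and uses \emph{smooth} descent: one compares $\RPic\Bun_T^d(C/S)$ with the kernel of $p_1^*-p_2^*$ on $\RPic\Bun_T^d(C'/S')$, and the hypothesis $\RPic(C/S)=\Pic_{C/S}(S)$ is precisely what makes the snake-lemma comparison go through. This route never leaves families of finite type over regular bases, so the imperfect-residue-field issue does not arise.
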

The above Theorem was shown in \cite{BH10} (see also \cite[Prop. 4.1.1]{FV1}) for a curve $C$ over $S=\Spec k$, with $k$ an algebraically closed field,  in which case the second assumption is automatic and the first assumption  $\End(J_{C_{\ov \eta}}) =\bbZ$ is also necessary for the  Theorem to hold.  Theorem \ref{T:all-taut} recovers the equality $\Pic\Bun_T^d(\cC_{g,n}/\cM_{g,n})=\Pic^{\taut}\Bun_T^d(\cC_{g,n}/\cM_{g,n})$, which was shown in \cite[Thm. 4.1]{FV1} for the universal family $\cC_{g,n}\to \cM_{g,n}$ (with  $g\geq 1$),  since the assumption of the Theorem is known to hold for $\cC_{g,n}\to \cM_{g,n}$ (see  Remarks \ref{R:eqRPic} and \ref{R:NSjump}\eqref{R:NSjump2}).

\begin{proof}
We will distinguish three cases.

\un{Case I:} $d=0$ and $\pi: C\to S$ admits a section $\sigma$.

Since $d=0$, we have the canonical identifications 
 $$
 \begin{sis}
 & \Bun_T^0(C/S)= \cJ^0(C/S)\otimes \Lambda(T), \\
 &  \Bun_T^0(C/S)\fatslash T=J^0(C/S)\otimes \Lambda(T),\\
 \end{sis}
 $$
 where  $\cJ^0(C/S)$ is the Jacobian stack of  $C/S$ and $J^0(C/S)=\cJ^0(C/S)\fatslash T$ is the Jacobian scheme of $C/S$. 
The Theorem is implied in this case by the following commutative  diagram with exact rows 
\begin{equation}\label{E:ab-scheme}
\xymatrix{
0 \ar[r] & \Lambda^*(T)\otimes \RPic^0(C/S) \ar[r]^{j_T^0}\ar[d]^{\cong} &\RPic^{\taut}\Bun_T^0(C/S)\ar[r]^{w_T^0\oplus \gamma_T^0} \ar@{^{(}->}[d]& \Lambda^*(T)\oplus \Bil^s(\Lambda(T))\ar[r] 
\ar[d]^{\cong}& 0\\
0 \ar[r] & (J^0(C/S)\otimes \Lambda(T))^{\vee}(S) \ar[r] &\RPic\Bun_T^0(C/S)\ar[r]^{\ov{\res}_T^{0}(C_{\ov \eta})} & \NS(\Bun_T^0(C_{\ov \eta})) & \\
}
\end{equation}
where
\begin{itemize}
\item the first line is exact by Theorem \ref{T:PicBunT>0} using that $\delta(C/S)=1$ since $\pi: C\to S$ has a section by assumption;
\item the second exact line comes from the fact that the rigidification $\Bun_T^0(C/S)\fatslash T=J^0(C/S)\otimes \Lambda(T)\to S$ is an abelian scheme (see e.g. the proof of \cite[Prop. 3.6]{FP16}), where  $(J^0(C/S)\otimes \Lambda(T))^{\vee}=J^0(C/S)^{\vee} \otimes \Lambda^*(T)$ is the dual abelian scheme and the last map is the restriction homomorphism onto the geometric generic fiber;
\item the left vertical isomorphism follows from the fact that $\RPic^0(C/S)=J^0(C/S)^{\vee}(S)$;
\item the right vertical isomorphism follows from the fact that (see \cite[\S 3.2]{BH10})
$$
\NS(\Bun_T^0(C_{\ov \eta}))=\Lambda^*(T)\oplus \Hom^s(\Lambda(T)\otimes \Lambda(T), \End(J_{C_{\ov \eta}})),
$$
where $\Hom^s(\Lambda(T)\otimes \Lambda(T), \End(J_{C_{\ov \eta}}))$ is the set of bilinear forms $\Hom(\Lambda(T)\otimes \Lambda(T), \End(J_{C_{\ov \eta}}))$ that are symmetric with respect to the Rosati involution, together with the assumption that $\End(J_{C_{\ov \eta}}) =\bbZ$. 
\item the commutativity of the left square follows from the proof of \cite[Lemma 4.2.1]{FV1}\footnote{the proof of loc. cit. for the universal family $\cC_{g,n}/\cM_{g,n}$ extends mutatis mutandis to an arbitrary family $C/S$} together with formula \eqref{E:Del-det};
\item the commutativity of the right square follows from \cite[Prop. 4.1.2]{FV1}\footnote{the proof of loc. cit. for the universal family $\cC_{g,n}/\cM_{g,n}$ extends mutatis mutandis to an arbitrary family $C/S$}.
\end{itemize}

\un{Case II:} $\pi: C\to S$ admits a section $\sigma$.

Fix an isomorphism $T\cong \Gm^r$, which induces a canonical isomorphism $\Lambda(T)\cong \bbZ^r$ under which $d$ corresponds to $(d_1,\ldots, d_r)$. 
Then we have an isomorphism 
$$\Bun_T^d(C/S)\cong \Bun_{\Gm}^{d_1}(C/S)\times_S \ldots \times_S \Bun_{\Gm}^{d_r}(C/S).$$
Using the section $\sigma$, we define an isomorphism 
$$
\begin{aligned}
\mathfrak t: \Bun_{\Gm}^{d_1}(C/S)\times_S \ldots \times_S \Bun_{\Gm}^{d_r}(C/S) & \xrightarrow{\cong} \Bun_{\Gm}^{0}(C/S)\times_S \ldots \times_S \Bun_{\Gm}^{0}(C/S)\cong \Bun^0_T(C/S)\\
(L_1,\ldots, L_r) & \mapsto (L_1(-d_1\sigma), \ldots, L_r(-d_r\sigma))
\end{aligned}
$$
By construction, we have that $(\mathfrak t\times \id)^*(\cL_{e_i})=\cL_{e_i}(-d_i\sigma)$, where $\{e_i\}$ is the canonical basis of $\bbZ^r\cong \Lambda^*(T)$. 
Hence, using the functoriality of the determinant of cohomology, we deduce that the pull-back isomorphism 
$$\mathfrak t^*:\Pic \Bun^0_T(C/S)\xrightarrow{\cong} \Pic\Bun_T^d(C/S)$$ 
preserves the tautological subgroups.  Hence the conclusion is implied by Case I. 



\un{Case III:} General case.

Let $\pi: C\to S$ be an arbitrary family of curves (i.e. possibly without a section). Fix a smooth cover $S'\to S$ such that the family $C':=C\times_SS'\to S'$ admits a section (e.g. $C\to S$). By Case II, we have the following morphism of exact sequences
\begin{equation}\label{E:co-equalizer}
\xymatrix{
0\ar[r]&\Lambda^*(T)\otimes\RPic(C'/S')\ar[d]^*{\mathrm{Id}_{\Lambda^*(T)}\otimes(p_1^*-p^*_2)}\ar[r]&\RPic(\Bun^d_T(C'/S'))\ar[d]^*{p_1^*-p^*_2}\ar[r]&\Bil^s(\Lambda(T))\ar[d]^0\ar[r]&0\\
0\ar[r]&\Lambda^*(T)\otimes\RPic(C''/S'')\ar[r]&\RPic(\Bun^d_T(C''/S''))\ar[r]&\Bil^s(\Lambda(T))\ar[r]&0
}
\end{equation}
where $(C''\to S''):=(C'\times_SS'\to S'\times_SS')$ and $p_1,p_2:C''\to C'$ denotes the natural projections. 

Consider the following commutative diagram of abelian groups
$$
\xymatrix{
	0\ar[r]&\Lambda^*(T)\otimes\RPic(C/S)\ar@{=}[d]\ar[r]^{i_T^d}&\RPic^{\taut}(\Bun^d_T(C/S))\ar@{^{(}->}[d]^{\iota^{\taut}}\ar[r]&\Bil^s(\Lambda(T))\ar[d]\ar[r]&0\\
	0\ar[r]&\Lambda^*(T)\otimes\RPic(C/S)\ar@{^{(}->}[d]\ar[r]^{\iota^{\taut}\circ i_T^d}&\RPic(\Bun^d_T(C/S))\ar@{^{(}->}[d]\ar[r]&K:=\coker\{\iota^{\taut}\circ i_T^d\}\ar[d]\ar[r]&0\\
	0\ar[r]&\Lambda^*(T)\otimes \Pic_{C/S}(S)\ar[r]&\ker\{p_1^*-p_2^*\}\ar[r]&\Bil^s(\Lambda(T))
}
$$
where all the rows are exact: the first one by Theorem \ref{T:PicBunT>0}, the second one by definition and the third one is obtained by taking the kernels of the vertical arrows in Diagram \eqref{E:co-equalizer}. 

By applying the snake lemma at the first and the third row and using the assumption $\RPic(C/S)=\Pic_{C/S}(S)$, we deduce that the central vertical arrows in the middle must be equalities, and the  have the assertion.
\end{proof}

\begin{rmk}If we remove the hypothesis $\RPic(C/S)=\Pic_{C/S}(S)$, Theorem \ref{T:all-taut} is false. Indeed, from the proof of Case III, we may deduce the following exact sequence of abstract groups
		$$
		0\to \Lambda^*(T)\otimes \Pic_{C/S}(C)\to \Pic_{\Bun_T^d(C/S)/S}(S)\to \Bil^s(\Lambda(T))\to 0,
		$$ 
	where $\Pic_{\Bun_T^d(C/S)/S}$ denotes the relative Picard sheaf of the relative moduli stack $\Bun_T^d(C/S)\to S$. If we assume $\RPic(C/S)\subsetneq\Pic_{C/S}(S)$ and $\RPic(\Bun_T^d(C/S))= \Pic_{\Bun_T^d(C/S)/S}(S)$ (e.g. if $d=0$, because $\Bun_T^0(C/S)\to S$ has sections), we get $\RPic^{\taut}(\Bun_T^d(C/S))\subsetneq\RPic(\Bun_T^d(C/S))$.
\end{rmk}

\subsection{The Picard group of $\Bun^{\delta}_G(C/S)$ for $G$ reductive} 

In this subsection, we study when the Picard group of $\Bun^{\delta}_G(C/S)$ is generated by the image of the pull-back along the abelianization map
$$\ab_{\sharp}:\Bun^{\delta}_G(C/S)\to  \Bun^{\delta^{\ab}}_{G^{\ab}}(C/S),$$ 
where $\delta^{\ab}:=\pi_1(\ab)(\delta)$, and the image of  the transgression map \eqref{E:trasgrG}.

More precisely, we investigate when the following commutative diagram, which is functorial in $G$ and $S$,
\begin{equation}\label{E:push-diag}
\xymatrix{
 \Sym^2 \Lambda^*(G^{\ab}) \ar@{^{(}->}[r]^{\Sym^2 \Lambda_{\ab}^*} \ar[d]^{\tau_{G^{\ab}}^{\delta^{\ab}}}& \left(\Sym^2 \Lambda^*(T_{G})\right)^{\mathscr W_G} \ar[d]^{\tau_{G}^\delta}\\
   \Pic \Bun_{G_{\ab}}^{\delta^{\ab}}(C/S) \ar@{^{(}->}[r]^{\ab_{\sharp}^*} &  \Pic \Bun_G^{\delta}(C/S)
}
\end{equation}
is a push-out diagram, where $\tau_G^{\delta}$ and $\tau_{G^{\ab}}^{\delta^{\ab}}$ are, respectively, the transgression maps for $G$ and $G^{\ab}$ (see Fact \ref{F:Pic-BunG}\eqref{F:Pic-BunG2}),
$\Sym^2 \Lambda_{\ab}^*$ is the injective homomorphism induced by the inclusion $\Lambda_{\ab}^*:\Lambda^*(G^{\ab})\hookrightarrow \Lambda^*(T_{G})$ which is invariant under the $\scr W_G$-action on  $\Lambda^*(T_{G})$, and $\ab_{\sharp}^*$ is injective by the following 

\begin{lem}\label{L:inj-ab*}
The pull-back map 
$$
\ab_{\sharp}^*:  \Pic \Bun_{G_{\ab}}^{\delta^{\ab}}(C/S)\to \Pic \Bun_G^{\delta}(C/S)
$$
is injective. 
\end{lem}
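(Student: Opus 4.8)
The plan is to reduce the injectivity of $\ab_\sharp^*$ to a statement about tori, where it follows from the fact that the structure morphism of $\Bun_{T}^{d}(C/S)$ is cohomologically flat in degree zero (Fact~\ref{F:propBunG}\eqref{F:propBunG6}). First I would fix a maximal torus $\iota\colon T_G\hookrightarrow G$ and a lift $d\in\Lambda(T_G)=\pi_1(T_G)$ of $\delta$, and consider the composite $q:=\ab\circ\iota\colon T_G\to G^{\ab}$, which induces on cocharacters the surjection $\Lambda_{\ab}\colon\Lambda(T_G)\twoheadrightarrow\Lambda(G^{\ab})$; in particular $\Lambda_{\ab}(d)=\delta^{\ab}$, since the coroots of $G$ lie in $\ker\Lambda_{\ab}$. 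By functoriality of the associated-bundle construction \eqref{E:fun1} and of the decomposition into connected components \eqref{E:PhiGcomp}, one has $\ab_\sharp\circ\iota_\sharp=q_\sharp$ as morphisms $\Bun_{T_G}^{d}(C/S)\to\Bun_{G^{\ab}}^{\delta^{\ab}}(C/S)$, hence $q_\sharp^*=\iota_\sharp^*\circ\ab_\sharp^*$ on Picard groups; so it suffices to prove that $q_\sharp^*$ is injective.

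Next I would split $q$. The kernel of $\Lambda_{\ab}$ is the cocharacter lattice of the induced maximal torus $T_{\scr D(G)}:=T_G\cap\scr D(G)$, and since $\Lambda(G^{\ab})$ is a free abelian group the exact sequence $0\to\Lambda(T_{\scr D(G)})\to\Lambda(T_G)\xrightarrow{\Lambda_{\ab}}\Lambda(G^{\ab})\to0$ splits. A splitting yields an isomorphism of tori $T_G\cong T_{\scr D(G)}\times G^{\ab}$ under which $q$ becomes the second projection and $d$ corresponds to $(d',\delta^{\ab})$ for a suitable $d'\in\Lambda(T_{\scr D(G)})$; correspondingly
\[
\Bun_{T_G}^{d}(C/S)\cong\Bun_{T_{\scr D(G)}}^{d'}(C/S)\times_S\Bun_{G^{\ab}}^{\delta^{\ab}}(C/S),
\]
and $q_\sharp$ is the projection onto the second factor, i.e.\ the base change of $\Phi_{T_{\scr D(G)}}^{d'}(C/S)\colon\Bun_{T_{\scr D(G)}}^{d'}(C/S)\to S$ along $\Phi_{G^{\ab}}^{\delta^{\ab}}(C/S)\colon\Bun_{G^{\ab}}^{\delta^{\ab}}(C/S)\to S$.

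By Fact~\ref{F:propBunG}\eqref{F:propBunG6} applied to the (reductive) torus $T_{\scr D(G)}$, the morphism $\Phi_{T_{\scr D(G)}}^{d'}(C/S)$ is cohomologically flat in degree zero, and $\cO_S\to(\Phi_{T_{\scr D(G)}}^{d'}(C/S))_*\cO$ is a \emph{universal} isomorphism, so it remains an isomorphism after the base change above: $\cO_{\Bun_{G^{\ab}}^{\delta^{\ab}}(C/S)}\xrightarrow{\ \sim\ }(q_\sharp)_*\cO$. Then, for $L\in\Pic\Bun_{G^{\ab}}^{\delta^{\ab}}(C/S)$ with $q_\sharp^*L\cong\cO$, the projection formula gives $L\cong(q_\sharp)_*q_\sharp^*L\cong(q_\sharp)_*\cO\cong\cO$; thus $q_\sharp^*$ has trivial kernel, and hence so does $\ab_\sharp^*$.

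The argument is not conceptually hard; the delicate points are bookkeeping ones: verifying that $\Lambda_{\ab}(d)=\delta^{\ab}$ so that the connected-component indices match on both sides, that $\ker\Lambda_{\ab}$ is a direct summand of $\Lambda(T_G)$ (which uses that $\Lambda(G^{\ab})$ is torsion-free), and — most importantly — that it is the \emph{universality} of the isomorphism in Fact~\ref{F:propBunG}\eqref{F:propBunG6} that is being used, since $\Phi_{G^{\ab}}^{\delta^{\ab}}(C/S)$ is not flat in any evident way. (One could alternatively invoke the injectivity of $\iota_\sharp^*$ from Fact~\ref{F:Pic-BunG}\eqref{F:Pic-BunG1}, valid for $d$ generic when $g=0$, but it is not needed for the argument above.)
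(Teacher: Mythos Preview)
Your proof is correct and shares its key idea with the paper's: both factor through the maximal torus via $q=\ab\circ\iota\colon T_G\to G^{\ab}$ and use that this surjection of tori splits. The paper, however, concludes more quickly: a section $s\colon G^{\ab}\to T_G$ of $q$ gives, by functoriality of \eqref{E:fun1}, a morphism $s_\sharp\colon \Bun_{G^{\ab}}^{\delta^{\ab}}(C/S)\to \Bun_{T_G}^{d}(C/S)$ (with $d=\Lambda(s)(\delta^{\ab})$) satisfying $q_\sharp\circ s_\sharp=\id$, so $s_\sharp^*$ retracts $q_\sharp^*$ and injectivity is immediate. Your route via the product decomposition, the Stein property of $\Phi_{T_{\scr D(G)}}^{d'}$, and the projection formula is perfectly valid but is working harder than necessary; the splitting you already produced \emph{is} the section, and that alone finishes the argument. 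Neither proof needs the injectivity of $\iota_\sharp^*$ from Fact~\ref{F:Pic-BunG}\eqref{F:Pic-BunG1}, as you correctly note.
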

\begin{proof}
The composition 
$$\phi:T_G\stackrel{\iota}{\hookrightarrow} G \stackrel{\ab}{\twoheadrightarrow} G^{\ab}$$
is a surjective homomorphism of tori, and hence it admits a section. This implies that, for any lift $d\in \pi_1(T_G)$ of $\delta\in \pi_1(G)$,  the homomorphism
$$
\phi_{\sharp}^*:\Pic \Bun_{G_{\ab}}^{\delta^{\ab}}(C/S)\xrightarrow{\ab_\sharp^*} \Pic \Bun_G^{\delta}(C/S) \xrightarrow{\iota_\sharp^*} \Pic \Bun_{T_G}^{d}(C/S)
$$
is injective. Hence, also the homomorphism $\ab_{\sharp}^*$ must be injective.
\end{proof}


We will use the following easy lemma on push-out diagrams of abelian groups. 

\begin{lem}\label{L:push}
Consider a commutative diagram of abelian groups
\begin{equation}\label{E:diag-Ab}
\xymatrix{
A \ar@{^{(}->}[r]^i \ar[d] & B\ar [d]\\
C \ar@{^{(}->}[r]^{j} & D
}
\end{equation}
where $i$ and $j$ are injective homomorphisms. Then \eqref{E:diag-Ab} is a push-out diagram if and only if   the natural map $\coker i\to \coker j$ is an isomorphism. 
\end{lem}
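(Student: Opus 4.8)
The plan is to reduce the assertion to the snake lemma via the explicit model of a pushout of abelian groups. Write $f\colon A\to C$ and $g\colon B\to D$ for the two vertical maps of \eqref{E:diag-Ab}, so that $g\circ i=j\circ f$. The pushout $P:=B\oplus_A C$ is realized as the quotient $(B\oplus C)/R$ with $R=\{(i(a),-f(a))\::\:a\in A\}$, and it carries the canonical homomorphisms $\iota_B\colon B\to P$, $\iota_C\colon C\to P$ together with the unique homomorphism $\phi\colon P\to D$ satisfying $\phi\circ\iota_B=g$ and $\phi\circ\iota_C=j$, provided by the universal property of the pushout. By definition, the square \eqref{E:diag-Ab} is a pushout if and only if $\phi$ is an isomorphism. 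Hence the goal is to prove that $\phi$ is an isomorphism precisely when the natural map $\overline\phi\colon\coker i\to\coker j$ (well defined and induced by $g$, since $g(i(A))=j(f(A))\subseteq j(C)$) is an isomorphism.

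First I would establish two routine bookkeeping facts, which are exactly where the injectivity hypotheses are used. The map $\iota_C\colon C\to P$ is injective: if $(0,c)=(i(a),-f(a))$ for some $a\in A$, then $i(a)=0$ forces $a=0$, hence $c=0$. Moreover the homomorphism $\coker i\to\coker\iota_C=P/\iota_C(C)$ sending $[b]$ to $[\iota_B(b)]$ is an isomorphism: it is well defined because $\iota_B(i(a))=[(i(a),0)]=[(0,f(a))]\in\iota_C(C)$; surjective because $[(b,c)]\equiv[(b,0)]$ modulo $\iota_C(C)$; and injective because $[(b,0)]\in\iota_C(C)$ means $(b,-c')\in R$ for some $c'$, i.e.\ $b\in i(A)$. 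On the other side, injectivity of $j$ identifies $\coker j$ with $D/j(C)$ in the evident way.

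With these identifications, $\phi$ fits into a commutative diagram of short exact sequences of abelian groups
\[
\xymatrix{
0\ar[r] & C\ar[r]^{\iota_C}\ar@{=}[d] & P\ar[r]\ar[d]^{\phi} & \coker i\ar[r]\ar[d]^{\overline\phi} & 0\\
0\ar[r] & C\ar[r]^{j} & D\ar[r] & \coker j\ar[r] & 0,
}
\]
whose left vertical arrow is the identity because $\phi\circ\iota_C=j$, and whose right vertical arrow is the natural map $\overline\phi$ because $\overline\phi([b])=[\phi(\iota_B(b))]=[g(b)]$. Applying the snake lemma to this diagram gives $\ker\phi\cong\ker\overline\phi$ and $\coker\phi\cong\coker\overline\phi$, so $\phi$ is an isomorphism if and only if $\overline\phi$ is. Combined with the first paragraph, this is the claim. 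There is no genuine obstacle in this lemma; the only care needed is in the two facts of the second paragraph, and it is precisely there that the injectivity of $i$ and $j$ enters—without those hypotheses the statement fails.
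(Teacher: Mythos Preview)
Your proof is correct. The paper does not give a proof of this lemma at all---it simply says ``This is easy and left to the reader''---so there is nothing to compare against; your argument via the explicit pushout model and the snake lemma is a clean way to supply the details the paper omits.
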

\begin{proof}
This is easy and left to the reader. 
\end{proof}

Hence, the diagram \eqref{E:push-diag} is a push-out diagram if and only if the induced homomorphism 
$$
\coker(\Sym^2 \Lambda_{\ab}^*)\rightarrow \coker(\ab_{\sharp}^*)
$$
is an isomorphism. 

The cokernel of $\Sym^2 \Lambda_{\ab}^*$ is described explicitly as follows. 

\begin{lem}\label{L:coker-sym}
\noindent 
\begin{enumerate}
\item \label{L:coker-sym1} We have an exact sequence of lattices, functorial in $G$,
\begin{equation}\label{E:seq-LTG}
\begin{aligned}
& 0\to \Sym^2 \Lambda^*(G^{\ab}) \xrightarrow{\Sym^2\Lambda_{\ab}^*}   \Sym^2\Lambda^*(T_G)^{\scr W_G}  \xrightarrow{\res_{\scr D}} \Bil^{s,\ev}(\Lambda(T_{\scr D(G)})\vert \Lambda(T_{G^{\ss}}))^{\scr W_G}\to 0,\\
 &  \hspace{6cm} b  \mapsto b_{|\Lambda(T_{\scr D(G)})\otimes \Lambda(T_{\scr D(G)})}  \\
\end{aligned}
\end{equation} 
where we identify $\Sym^2\Lambda^*(T_G)$ with the lattice $\Bil^{s,\ev}(\Lambda(T_G))$ of even symmetric  bilinear forms and the last group is defined by 
$$
\Bil^{s,\ev}(\Lambda(T_{\scr D(G)})\vert \Lambda(T_{G^{\ss}}))^{\scr W_G}:=\left\{b\in \Bil^{s,\ev}(\Lambda(T_{\scr D(G)}))^{\scr W_G}\: : \: b^{\bbQ}_{|\Lambda(T_{\scr D(G)})\otimes \Lambda(T_{G^{\ss}})}\: \text{ is integral,}\right\}
$$
where $b^{\bbQ}$ is the  extension of $b$ to a rational bilinear symmetric form on $\Lambda(T_{G^{\ss}})$.

\item \label{L:coker-sym2} If $G^{\sc}\cong G_1\times \ldots \times G_s$ is the decomposition of the universal cover $G^{\sc}$ of $\scr D(G)$ into simply connected quasi-simple groups, then 
 $\Bil^{s,\ev}(\Lambda(T_{\scr D(G)})\vert \Lambda(T_{G^{\ss}}))^{\scr W_G}$ is a finite index sub-lattice of 
$$
\left(\Sym^2 \Lambda^*(T_{G^{\sc}})\right)^{\mathscr W_G}=\Bil^{s,\ev}(\Lambda(T_{G^{\sc}}))^{\scr W_G}=\bbZ\langle b_1\rangle \oplus \ldots \oplus \bbZ\langle b_s\rangle,
$$ 
 where $b_i$ is the basic inner product of $G_i$.
 \item \label{L:coker-sym3} If $\scr D(G)$ is simply connected (i.e. $\scr D(G)=G^{\sc}$), then 
 $$
 \Bil^{s,\ev}(\Lambda(T_{\scr D(G)})\vert \Lambda(T_{G^{\ss}}))^{\scr W_G}=\Bil^{s,\ev}(\Lambda(T_{G^{\sc}}))^{\scr W_G}.
 $$
\end{enumerate}
\end{lem}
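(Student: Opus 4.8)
The plan is to reduce everything to elementary lattice-theoretic statements about the diagrams \eqref{E:tori-car} and \eqref{E:tori-cocar} together with the standard theory of root data. For part \eqref{L:coker-sym1}, I would first recall that the canonical rational splitting $\Lambda^*(T_G)_\bbQ=\Lambda^*(T_G)^{\ab}_\bbQ\oplus\Lambda^*(T_G)^{\ss}_\bbQ$ is $\scr W_G$-equivariant with $\scr W_G$ acting trivially on the first summand (Lemma \ref{L:inv-char} identifies $\Lambda^*(T_G)^{\scr W_G}$ with $\Lambda^*(G^{\ab})$). Passing to $\Sym^2$, one gets a $\scr W_G$-equivariant decomposition $\Sym^2\Lambda^*(T_G)_\bbQ=\Sym^2\Lambda^*(G^{\ab})_\bbQ\oplus\big(\Lambda^*(G^{\ab})_\bbQ\otimes\Lambda^*(T_{G^{\ss}})_\bbQ\big)\oplus\Sym^2\Lambda^*(T_{G^{\ss}})_\bbQ$, and because $\scr W_G$ acts on $\Lambda^*(T_{G^{\ss}})_\bbQ$ with no nonzero invariants (the semisimple part has no coinvariants either), the middle summand has no $\scr W_G$-invariants; so $\big(\Sym^2\Lambda^*(T_G)^{\scr W_G}\big)_\bbQ=\Sym^2\Lambda^*(G^{\ab})_\bbQ\oplus\big(\Sym^2\Lambda^*(T_{G^{\ss}})_\bbQ\big)^{\scr W_G}$. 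Injectivity of $\Sym^2\Lambda_{\ab}^*$ on the integral level is immediate since $\Lambda_{\ab}^*$ is a split injection after tensoring with $\bbQ$ and $\Sym^2$ of a primitive sublattice embeds primitively. The substance is the surjectivity of $\res_{\scr D}$: given $\scr W_G$-invariant even symmetric $b$ on $\Lambda(T_{\scr D(G)})$ whose rational extension is integral on $\Lambda(T_{\scr D(G)})\otimes\Lambda(T_{G^{\ss}})$, I must lift it to a $\scr W_G$-invariant even form on $\Lambda(T_G)$. Here I would use the fact that $\Lambda(T_G)=\Lambda(\scr R(G))\oplus_{\bbQ}\Lambda(T_{\scr D(G)})$ rationally with $\Lambda(T_{\scr D(G)})$ a primitive sublattice: define the lift to be $b$ on $\Lambda(T_{\scr D(G)})$, zero on $\Lambda(\scr R(G))$, and on mixed terms use the integrality hypothesis — but one must be careful that the resulting form is integral on all of $\Lambda(T_G)$, not just on the direct sum of the two sublattices, and this is where the hypothesis ``integral on $\Lambda(T_{\scr D(G)})\otimes\Lambda(T_{G^{\ss}})$'' (as opposed to just $\Lambda(T_{\scr D(G)})$) gets used, since $\Lambda_{\ss}(\Lambda(T_G))=\Lambda(T_{G^{\ss}})$ by \eqref{E:dec-Lbis}. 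Exactness in the middle ($\ker\res_{\scr D}=\Im\Sym^2\Lambda^*_{\ab}$) follows from the rational decomposition above plus $\Lambda^*(G^{\ab})=\Lambda^*(T_G)\cap\Lambda^*(T_G)^{\ab}_\bbQ$. Functoriality in $G$ is inherited from the functoriality of \eqref{E:funclatt}.

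For part \eqref{L:coker-sym2}, I would invoke Remark \ref{R:lat-Lie}: $\Lambda^*(T_{G^{\sc}})\cong\Lambda_{\wei}(\g^{\ss})$, and the decomposition $G^{\sc}=G_1\times\cdots\times G_s$ gives $\Lambda^*(T_{G^{\sc}})=\bigoplus_i\Lambda^*(T_{G_i})$ with $\scr W_{G^{\sc}}=\prod_i\scr W_{G_i}$ acting factorwise. The standard fact that for a simply connected quasi-simple group $G_i$ the lattice of $\scr W_{G_i}$-invariant even symmetric forms on $\Lambda(T_{G_i})$ is free of rank one, generated by the basic inner product $b_i$ (normalized so that short coroots have square length $2$), is classical — it is the content of the Dynkin index computation, see e.g. the references in \cite{FV1} — and the cross-terms vanish because there are no nonzero $\scr W_{G_i}\times\scr W_{G_j}$-invariant bilinear forms between $\Lambda(T_{G_i})$ and $\Lambda(T_{G_j})$ for $i\neq j$ (each factor has no invariant vectors). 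Hence $\big(\Sym^2\Lambda^*(T_{G^{\sc}})\big)^{\scr W_G}=\bigoplus_i\bbZ b_i$. The inclusion $\Bil^{s,\ev}(\Lambda(T_{\scr D(G)})\mid\Lambda(T_{G^{\ss}}))^{\scr W_G}\hookrightarrow\big(\Sym^2\Lambda^*(T_{G^{\sc}})\big)^{\scr W_G}$ comes from the chain $\Lambda(T_{G^{\sc}})\subseteq\Lambda(T_{\scr D(G)})\subseteq\Lambda(T_{G^{\ss}})$ of finite-index inclusions (restricting a form down the chain), and it is of finite index because all these lattices are commensurable, so the $\bbQ$-vector spaces of invariant forms coincide.

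Part \eqref{L:coker-sym3} is then immediate: if $\scr D(G)=G^{\sc}$ then $\Lambda(T_{\scr D(G)})=\Lambda(T_{G^{\sc}})$, and since $G^{\sc}\twoheadrightarrow G^{\ss}$ is an isogeny with $\Lambda(T_{G^{\sc}})\subseteq\Lambda(T_{G^{\ss}})$, a $\scr W_G$-invariant even symmetric form $b$ on $\Lambda(T_{G^{\sc}})$ automatically satisfies the integrality condition on $\Lambda(T_{G^{\sc}})\otimes\Lambda(T_{G^{\ss}})$: this is exactly Lemma \ref{L:intGad} applied with the roles arranged so that $\Lambda(T_{G^{\ss}})\subseteq\Lambda(T_{G^{\ad}})$, hence $b^\bbQ$ integral on $\Lambda(T_{G^{\sc}})\otimes\Lambda(T_{G^{\ad}})$ implies it integral on the smaller $\Lambda(T_{G^{\sc}})\otimes\Lambda(T_{G^{\ss}})$. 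Therefore the defining constraint is vacuous and $\Bil^{s,\ev}(\Lambda(T_{\scr D(G)})\mid\Lambda(T_{G^{\ss}}))^{\scr W_G}=\Bil^{s,\ev}(\Lambda(T_{G^{\sc}}))^{\scr W_G}$.

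The main obstacle I anticipate is the surjectivity of $\res_{\scr D}$ in part \eqref{L:coker-sym1}: constructing an honest integral $\scr W_G$-invariant lift on all of $\Lambda(T_G)$ rather than merely on the sublattice $\Lambda(\scr R(G))\oplus\Lambda(T_{\scr D(G)})$ requires bookkeeping with the projections $p_1,p_2$ of \eqref{E:dec-L} and a careful use of the precise integrality hypothesis — everything else is either classical root-datum input or formal diagram chasing. (I would also note in passing that this Lemma overlaps with Proposition \ref{P:forms-LTG} above, so part of it could alternatively be quoted from there; I have phrased the plan so as to be self-contained.)
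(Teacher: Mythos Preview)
The paper's own proof is by citation only: part \eqref{L:coker-sym1} to \cite[Prop.~2.4]{FV2}, part \eqref{L:coker-sym2} to the finite-index chain $\Lambda(T_{G^{\sc}})\hookrightarrow\Lambda(T_{\scr D(G)})\hookrightarrow\Lambda(T_{G^{\ss}})$ together with \cite[Lemma~2.2.1]{FV1}, and part \eqref{L:coker-sym3} to \cite[Lemma~4.3.4]{BH10}. Your plans for \eqref{L:coker-sym2} and \eqref{L:coker-sym3} are correct and reproduce the content of those references.

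For part \eqref{L:coker-sym1}, your proposed construction of the lift for surjectivity of $\res_{\scr D}$ does not work. You set $\tilde b$ equal to $b$ on $\Lambda(T_{\scr D(G)})$ and zero on $\Lambda(\scr R(G))$; by $\scr W_G$-invariance the mixed $\Lambda(\scr R(G))\otimes\Lambda(T_{\scr D(G)})$-component is then forced to be zero as well (the semisimple cocharacter lattice has no nonzero $\scr W_G$-invariants), so your lift is exactly $\tilde b(x,y)=b^{\bbQ}(p_2(x),p_2(y))$. Integrality of this on $\Lambda(T_G)\otimes\Lambda(T_G)$ would require $b^{\bbQ}$ to be integral on $\Lambda(T_{G^{\ss}})\otimes\Lambda(T_{G^{\ss}})$, strictly more than the hypothesis provides. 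Take $G=\GL_2$: here $\Lambda(T_{\scr D(G)})=\bbZ(1,-1)$, $\Lambda(T_{G^{\ss}})=\tfrac{1}{2}\bbZ(1,-1)$ inside $\Lambda(T_{\scr D(G)})_{\bbQ}$, and the basic form $b_1$ with $b_1((1,-1),(1,-1))=2$ satisfies the hypothesis since $b_1((1,-1),\tfrac{1}{2}(1,-1))=1$, yet your $\tilde b$ gives $\tilde b((1,0),(1,0))=\tfrac{1}{2}$. An honest lift is $-e_1^*\cdot e_2^*\in(\Sym^2\Lambda^*(T_{\GL_2}))^{S_2}$, which is \emph{nonzero} on $\Lambda(\scr R(G))=\bbZ(1,1)$.

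The moral is that the lift is only determined modulo $\Sym^2\Lambda^*(G^{\ab})$, and one genuinely needs that freedom: the naive $b^{\bbQ}\circ(p_2\times p_2)$ is $\scr W_G$-invariant and restricts correctly but is only rational, and the correction term making it integral and even lies in $\Sym^2\Lambda^*(G^{\ab})_{\bbQ}$ rather than in the integral lattice. You should either quote \cite[Prop.~2.4]{FV2} directly (as the paper does, and as you note is an option) or supply a correct construction of the lift.
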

\begin{proof}
Part \eqref{L:coker-sym1} is proved in \cite[Prop. 2.4]{FV2}. Part \eqref{L:coker-sym2} follows from the fact that we have finite index inclusions of lattices
$$
\Lambda(T_{G^{\sc}})\hookrightarrow \Lambda(T_{\scr D(G)})\hookrightarrow \Lambda(T_{G^{\ss}})
$$
together with \cite[Lemma 2.2.1]{FV1}. Part \eqref{L:coker-sym3} follows from \cite[Lemma 4.3.4]{BH10}.

\end{proof}

A first case where \eqref{E:diag-Ab} is a push-out diagram is provided by the following 

\begin{teo}\label{T:PicBunGAss}
Assume that $G$ is a reductive group such that $\scr D(G)$ is simply connected and fix  $\delta\in\pi_1(G)$. 
There exists an exact sequence functorial in $S$ and in $G$
\begin{equation}\label{E:ex-Pic}
0\to \Pic \Bun_{G_{\ab}}^{\delta^{\ab}}(C/S)\xrightarrow{\ab_\sharp^*} \Pic \Bun_G^{\delta}(C/S)\to \left(\Sym^2 \Lambda^*(T_{G^{\sc}})\right)^{\mathscr W_G}\to 0
\end{equation}
together with a (non canonical) splitting functorial in $S$. 

In particular, the commutative diagram \eqref{E:diag-Ab} is a push-out diagram. 
\end{teo}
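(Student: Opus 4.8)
The plan is to reduce the statement to its known analogue over the geometric generic fibre $C_{\ov\eta}:=C\times_S\ov\eta$ of $C/S$, a reduction that becomes almost formal once one exploits a \emph{torus section} of the abelianization morphism. Note first that, since $\scr D(G)$ is simply connected, the sequence \eqref{E:seq-pi1} shows $\pi_1(\ab)\colon\pi_1(G)\xrightarrow{\ \cong\ }\pi_1(G^{\ab})$, so $\delta$ and $\delta^{\ab}$ determine each other. I would fix a section $s\colon G^{\ab}\hookrightarrow T_G$ of the surjection of tori $\ab\circ\iota\colon T_G\twoheadrightarrow G^{\ab}$. The composite $\iota\circ s\colon G^{\ab}\to G$ satisfies $\ab\circ(\iota\circ s)=\id_{G^{\ab}}$, so $(\iota\circ s)_\sharp\colon\Bun_{G^{\ab}}^{\delta^{\ab}}(C/S)\to\Bun_{G}^{\delta}(C/S)$ is a section of $\ab_\sharp$, and hence $(\iota\circ s)_\sharp^*$ is a retraction of $\ab_\sharp^*$. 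This re-proves Lemma \ref{L:inj-ab*}, provides the required (non-canonical, $S$-functorial) splitting of \eqref{E:ex-Pic}, and identifies $\coker(\ab_\sharp^*)$ with $\ker\!\bigl((\iota\circ s)_\sharp^*\colon\Pic\Bun_{G}^{\delta}(C/S)\to\Pic\Bun_{G^{\ab}}^{\delta^{\ab}}(C/S)\bigr)$; so it remains to identify this kernel, compatibly with the transgression map, with $(\Sym^2\Lambda^*(T_{G^{\sc}}))^{\scr W_G}$.

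Next I would do the lattice bookkeeping. Put $K_s:=\ker\!\bigl(s^*\colon(\Sym^2\Lambda^*(T_G))^{\scr W_G}\to\Sym^2\Lambda^*(G^{\ab})\bigr)$, where $s^*(b):=b(s(-),s(-))$. Since $s^*\circ\Sym^2\Lambda_{\ab}^*=\id$, one has formally $(\Sym^2\Lambda^*(T_G))^{\scr W_G}=\Sym^2\Lambda^*(G^{\ab})\oplus K_s$; combining this with the fact that $\res_{\scr D}$ is surjective with kernel $\Sym^2\Lambda^*(G^{\ab})$ (Lemma \ref{L:coker-sym}\eqref{L:coker-sym1} and \eqref{L:coker-sym3}, the latter using $\scr D(G)=G^{\sc}$), the map $\res_{\scr D}$ restricts to an isomorphism $K_s\xrightarrow{\ \cong\ }(\Sym^2\Lambda^*(T_{G^{\sc}}))^{\scr W_G}$ and $\coker(\Sym^2\Lambda_{\ab}^*)=K_s$. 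Functoriality of the transgression in $G$, applied to $\iota\circ s$, gives $(\iota\circ s)_\sharp^*\circ\tau_G^{\delta}=\tau_{G^{\ab}}^{\delta^{\ab}}\circ s^*$; hence $\tau_G^{\delta}(K_s)\subseteq\ker((\iota\circ s)_\sharp^*)$, and since $\tau_G^{\delta}$ is injective (Fact \ref{F:Pic-BunG}\eqref{F:Pic-BunG2}) so is $\tau_G^{\delta}|_{K_s}$; a short diagram chase with \eqref{E:push-diag} identifies the natural map $\coker(\Sym^2\Lambda_{\ab}^*)\to\coker(\ab_\sharp^*)$ of Lemma \ref{L:push} with $\tau_G^{\delta}|_{K_s}$. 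By Lemma \ref{L:push} the push-out assertion, equivalently \eqref{E:ex-Pic}, thus reduces to the single equality $\tau_G^{\delta}(C/S)(K_s)=\ker((\iota\circ s)_\sharp^*)_{C/S}$.

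To prove this equality I would descend from $C_{\ov\eta}$. Using Fact \ref{F:Pic-BunG}\eqref{F:Pic-BunG0} for $G$ and for $G^{\ab}$ together with the compatibility of $(\iota\circ s)_\sharp$ with the structure morphisms to $S$, restriction to $C_{\ov\eta}$ induces an \emph{injection} $r\colon\ker((\iota\circ s)_\sharp^*)_{C/S}\hookrightarrow\ker((\iota\circ s)_\sharp^*)_{C_{\ov\eta}}$ (its potential kernel $(\Phi_G^{\delta})^*\Pic(S)$ meets $\ker((\iota\circ s)_\sharp^*)$ only in $0$, because $(\iota\circ s)_\sharp^*\circ(\Phi_G^{\delta})^*=(\Phi_{G^{\ab}}^{\delta^{\ab}})^*$ is injective), and by $S$-functoriality of $\tau$ it maps $\tau_G^{\delta}(C/S)(K_s)$ bijectively onto $\tau_G^{\delta}(C_{\ov\eta})(K_s)$. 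Over $C_{\ov\eta}$ the theorem is already known: for $g\geq 1$ this is the case $S=\Spec\overline{k(\eta)}$, due to Biswas--Hoffmann \cite{BH10}, and for $g=0$ it follows from the genus-zero analysis (alternatively one argues directly via Theorem \ref{T:PicBunT-0}); in either case $\ker((\iota\circ s)_\sharp^*)_{C_{\ov\eta}}=\tau_G^{\delta}(C_{\ov\eta})(K_s)$. Since $r$ commutes with $(\iota\circ s)_\sharp^*$, is injective on $\ker((\iota\circ s)_\sharp^*)_{C/S}$, and satisfies $r\bigl(\tau_G^{\delta}(C/S)(K_s)\bigr)=\tau_G^{\delta}(C_{\ov\eta})(K_s)=\ker((\iota\circ s)_\sharp^*)_{C_{\ov\eta}}\supseteq r\bigl(\ker((\iota\circ s)_\sharp^*)_{C/S}\bigr)$, the inclusion $\tau_G^{\delta}(C/S)(K_s)\subseteq\ker((\iota\circ s)_\sharp^*)_{C/S}$ must be an equality. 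The only non-formal input here is the fibrewise theorem over $C_{\ov\eta}$ imported from Biswas--Hoffmann, and the main obstacle will be the bookkeeping needed to match their description of $\Pic\Bun_{G}^{\delta}(C_{\ov\eta})$ with the image of the transgression and with the section $(\iota\circ s)_\sharp$; everything else — the torus section, the descent via Fact \ref{F:Pic-BunG}\eqref{F:Pic-BunG0}, and the identification of $K_s$ — is formal.
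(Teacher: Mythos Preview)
Your argument is correct and rests on the same two pillars as the paper's proof: the torus section $s\colon G^{\ab}\hookrightarrow T_G$ giving a splitting of $\ab_\sharp^*$ functorial in $S$, and the reduction to the geometric generic fibre $C_{\ov\eta}$ via Fact~\ref{F:Pic-BunG}\eqref{F:Pic-BunG0}, where the result is supplied by Biswas--Hoffmann.

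The execution, however, is genuinely different. The paper sandwiches $C/S$ between the universal family $\cC_g/\cM_g$ and the fibre $C_{\ov\eta}$ in a three-row diagram of exact sequences, invoking the already-established universal case (\cite[Cor.~3.4]{FV2} for $g\ge 1$, and the separately proved Lemma~\ref{L:exact-g0} for $g=0$) to get surjectivity of the induced map $\sigma\colon\cQ\to(\Sym^2\Lambda^*(T_{G^{\sc}}))^{\scr W_G}$, and then the compatible splittings to get injectivity. You bypass the universal family entirely: by working directly with the explicit complement $K_s\subset(\Sym^2\Lambda^*(T_G))^{\scr W_G}$ and the injective transgression $\tau_G^\delta|_{K_s}$, you reduce everything to the single equality $\tau_G^\delta(C/S)(K_s)=\ker((\iota\circ s)_\sharp^*)_{C/S}$, which follows from its fibrewise analogue by the injectivity of restriction on that kernel. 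Your route is slightly more economical in that it needs only the Biswas--Hoffmann input and not the prior computation over $\cM_g$; the paper's route, on the other hand, makes the comparison with the universal and fibrewise cases more visibly symmetric and keeps the argument at the level of cokernels rather than explicit transgression images.
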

This Theorem was proved: for $G$ semisimple by Faltings \cite{Fa03}, for a fixed curve $C$ over $k=\ov k$ by Biswas-Hoffmann \cite{BH10} and for the universal family $\cC_{g,n}\to \cM_{g,n}$ for $g\geq 1$ by the authors in \cite[Thm. C(2)]{FV1} and \cite[Cor. 3.4]{FV2}.

\begin{proof}
The second assertion follows from the first one using Lemmas \ref{L:push} and \ref{L:coker-sym}, and the assumption that $\scr D(G)$ is simply connected. 
Hence, it is enough to prove the first assertion.

Let $\ov \eta$ be a geometric generic point of $S$. The family $\pi:C\to S$ sits in the following Cartesian diagram of families of curves
$$
\xymatrix{
C_{\ov \eta} \ar[r]\ar[d] \ar@{}[dr]|\square & C \ar[d]^{\pi} \ar[r] \ar@{}[dr]|\square & \cC_g\ar[d] \\
\ov \eta \ar[r] & S \ar[r] & \cM_g 
}
$$
The above diagram gives rise to the following commutative diagram with exact rows
\begin{equation}\label{E:diag-ab}
\xymatrix{
0\ar[r] & \RPic \Bun_{G_{\ab},g}^{\delta^{\ab}}\ar[r]^{\ab_\sharp^*} \ar@{^{(}->}[d]& \RPic \Bun_{G,g}^{\delta}\ar[r] \ar@{^{(}->}[d] & \left(\Sym^2 \Lambda^*(T_{G^{\sc}})\right)^{\mathscr W_G}\ar[r] \ar@{^{(}->}[d]^{\rho} &  0 \\
0\ar[r] & \RPic \Bun_{G_{\ab}}^{\delta^{\ab}}(C/S)\ar[r]^{\ab_\sharp^*(C/S)} \ar@{^{(}->}[d]& \RPic \Bun_G^{\delta}(C/S)\ar[r] \ar@{^{(}->}[d] & \cQ \ar[r] \ar@{->>}[d]^{\sigma} &  0 \\
0\ar[r] & \Pic \Bun_{G_{\ab}}^{\delta^{\ab}}(C_{\ov \eta})\ar[r]^{\ab_\sharp^*(C_{\ov \eta})} & \Pic \Bun_G^{\delta}(C_{\ov \eta})\ar[r] & \left(\Sym^2 \Lambda^*(T_{G^{\sc}})\right)^{\mathscr W_G}\ar[r] &  0 \\
}
\end{equation}
where 
\begin{itemize}
\item the first line is exact by \cite[Cor. 3.4]{FV2} for $g\geq 1$ and by Lemma below for $g=0$, where $\Bun^{\delta}_{G,g}:=\Bun_G^{\delta}(\cC_g/\cM_g)$ and similarly for $G^{\ab}$; 
\item $\cQ$ is the cokernel of $\ab_\sharp^*(C/S)$, so that the second line is exact by definition;
\item the third line is exact as it follows by applying \cite[Thm. 5.3.1]{BH10} to the morphism $\ab$ and using \cite[Prop. 5.2.11]{BH10} together with our hypothesis that $\scr D(G)$ is simple connected; 
\item the vertical morphisms in the left and central columns are injective by Fact  \ref{F:Pic-BunG}\eqref{F:Pic-BunG0};
\item the composition $\sigma\circ \rho$ is the identity, which implies that $\rho$ is injective and $\sigma$ is surjective.
\end{itemize}
It remains to show that $\sigma$ is injective (or, equivalently, that $\rho$ is surjective), and that the exact sequence \eqref{E:ex-Pic} admits a functorial splitting. 

To this aim, consider the surjective morphisms of tori $T_G\hookrightarrow G\xrightarrow{\ab} G^{\ab}$ and pick a section $\wt \iota: G^{\ab}\hookrightarrow T_G$. The composition $\iota: G^{\ab}\stackrel{\wt \iota}{\hookrightarrow} T_G\hookrightarrow G$ is a section of $\ab$. The induced morphism  
$$\iota_{\sharp}(C/S):\Bun_{G^{\ab}}^{\delta^{\ab}}(C/S)\hookrightarrow \Bun_{G}^{\delta}(C/S)$$
is a section of the morphism $\ab_{\sharp}(C/S)$. By taking the pull-back at the level of the Picard groups, we deduce that the inclusion 
$$
\ab_{\sharp}^*(C/S):  \Pic \Bun_{G_{\ab}}^{\delta^{\ab}}(C/S)\hookrightarrow \Pic \Bun_G^{\delta}(C/S)
$$
admits a splitting which is functorial in $S$, namely 
$$\iota_{\sharp}(C/S)^*: \Pic \Bun_G^{\delta}(C/S)\twoheadrightarrow \Pic \Bun_{G_{\ab}}^{\delta^{\ab}}(C/S).$$ 
This show that the exact rows in \eqref{E:diag-ab} admit compatible splittings, which implies that $\sigma$ is injective and we are done. 
\end{proof}

The following Lemma extends \cite[Cor. 3.4]{FV2} to genus $0$, for the reductive groups with simply connected derived subgroup. 

\begin{lem}\label{L:exact-g0}
Let $G$ be a reductive group such that $\scr D(G)$ is simply connected.
For every $n\geq 0$, there exists an exact sequence, functorial in $G$
\begin{equation}\label{E:exPic-g0}
0\to \Pic \Bun_{G_{\ab},0, n}^{\delta^{\ab}}\xrightarrow{\ab_\sharp^*} \Pic \Bun_{G,0, n}^{\delta}\to \left(\Sym^2 \Lambda^*(T_{G^{\sc}})\right)^{\mathscr W_G}\to 0.
\end{equation}
\end{lem}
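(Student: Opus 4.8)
The plan is to mimic, in genus zero, the strategy already available in positive genus, reducing everything to the already-established torus case (Theorem \ref{T:PicBunT-0}) and the transgression map (Fact \ref{F:Pic-BunG}\eqref{F:Pic-BunG2}). First I would fix a maximal torus $\iota:T_G\hookrightarrow G$, a lift $d\in\Lambda(T_G)$ of $\delta$ which is generic in the sense of Definition/Lemma \ref{DL:gen-d} (this is the point where genus zero forces extra care, since Fact \ref{F:Pic-BunG}\eqref{F:Pic-BunG1}--\eqref{F:Pic-BunG2} require a generic lift when $g=0$), and a section $\widetilde\iota:G^{\ab}\hookrightarrow T_G$ of the surjection of tori $T_G\to G^{\ab}$. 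The composite $G^{\ab}\stackrel{\widetilde\iota}{\hookrightarrow}T_G\hookrightarrow G$ is a section of $\ab:G\to G^{\ab}$, so $\ab_{\sharp}^*$ on Picard groups is split injective; this gives the left exactness of \eqref{E:exPic-g0} and a functorial splitting for free, exactly as in the proof of Theorem \ref{T:PicBunGAss}. It remains to identify the cokernel of $\ab_{\sharp}^*$ with $(\Sym^2\Lambda^*(T_{G^{\sc}}))^{\mathscr W_G}$.

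For that, I would combine the transgression map with a weight computation. The transgression $\tau_G^{\delta}:(\Sym^2\Lambda^*(T_G))^{\mathscr W_G}\hookrightarrow\Pic\Bun_{G,0,n}^{\delta}$ composed with the quotient by $\ab_{\sharp}^*(\Pic\Bun_{G^{\ab},0,n}^{\delta^{\ab}})$ lands in $\coker(\ab_{\sharp}^*)$, and by Lemma \ref{L:coker-sym}\eqref{L:coker-sym1},\eqref{L:coker-sym3} (using that $\scr D(G)=G^{\sc}$) the source, modulo $\Sym^2\Lambda^*_{\ab}$, is precisely $(\Sym^2\Lambda^*(T_{G^{\sc}}))^{\mathscr W_G}$; so there is a natural map $\rho:(\Sym^2\Lambda^*(T_{G^{\sc}}))^{\mathscr W_G}\to\coker(\ab_{\sharp}^*)$. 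To see it is an isomorphism I would use Fact \ref{F:Pic-BunG}\eqref{F:Pic-BunG1}: the pullback $\iota_{\sharp}^*:\Pic\Bun_{G,0,n}^{\delta}\hookrightarrow\Pic\Bun_{T_G,0,n}^{d}$ is injective, and by Theorem \ref{T:PicBunT-0}\eqref{T:PicBunT-02} the target is described by the weight homomorphism $w_{T_G}^d$. So the whole problem becomes a statement about sublattices of $\Lambda^*(T_G)$: one must show that $w_{T_G}^d$ identifies $\iota_{\sharp}^*\Pic\Bun_{G,0,n}^{\delta}$ with the set of weights of the form (image of a $G^{\ab}$-weight) $+$ (restriction to $T_G$ of a Weyl-invariant quadratic expression in $d$), modulo the $G^{\ab}$-part. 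Concretely, $w_{T_G}^d\circ\tau_G^{\delta}$ sends a symmetric bilinear form $b$ to $b(d,-)$ (up to the sign/parity corrections dictated by \eqref{E:wgt-det}); since $\delta$, hence $d$, is generic, Definition/Lemma \ref{DL:gen-d}(1)(i) says the contraction $b\mapsto b(d^{\ss},-)$ is \emph{injective} on $\Bil^{s,\ev}(\Lambda(T_{G^{\sc}}))^{\mathscr W_G}$, which will force $\rho$ to be injective.

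Surjectivity of $\rho$ — equivalently, that every line bundle on $\Bun_{G,0,n}^{\delta}$ is, modulo $\ab_{\sharp}^*$ and transgression, trivial — is the step I expect to be the main obstacle. Here I would either (a) invoke the fibration structure: in genus zero the moduli stack $\Bun_{G,0,n}^{\delta}$ admits, after the section reduction above, a concrete description (e.g. $\Bun_{G}^\delta$ over a point is a quotient related to the affine Grassmannian / a product of flag-type stacks), reducing the count of generators of $\Pic$ to the semisimple simply connected case where $\Pic\cong\bbZ^s$ by Faltings-type results already cited (Corollary \ref{MCor:PicG>0} in the $g=0$, simply connected case, which is referenced as established); or (b) argue by base change from $\cM_{0,n}$ to a point, using that $\Pic\cM_{0,n}$ is well understood and that the relative Picard group of $\Bun_{G,0,n}^\delta\to\cM_{0,n}$ is detected on a fiber via Fact \ref{F:Pic-BunG}\eqref{F:Pic-BunG0}, then invoking the known genus-zero computation of $\Pic\Bun_G^\delta(\bbP^1)$. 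In either route the ranks match by Corollary \ref{C:rank-Bil} applied to $G$ versus $G^{\ab}$, namely $\mathrm{rk}\,\coker(\ab_\sharp^*)=|\{\text{simple factors of }\g^{\ss}\}|=\mathrm{rk}\,(\Sym^2\Lambda^*(T_{G^{\sc}}))^{\mathscr W_G}$, so once injectivity of $\rho$ and this rank equality are in hand it remains only to rule out a finite cokernel, which the explicit weight formulas \eqref{E:wgt-det} together with the genericity of $d$ and the description of $\Im(w_{T_G}^d)$ in Theorem \ref{T:PicBunT-0}\eqref{T:PicBunT-03} allow one to do by exhibiting, for each basic inner product $b_i$ of the $i$-th simple factor of $G^{\sc}$, an explicit tautological line bundle on $\Bun_{T_G,0,n}^d$ in the image of $\iota_{\sharp}^*$ whose weight realizes $\rho(b_i)$.
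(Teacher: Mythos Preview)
Your outline is a plausible alternative route, but the surjectivity step is not actually completed, and the paper's proof goes a different way.

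\textbf{What the paper does.} The paper does not build $\rho$ from the transgression and then argue rank/torsion. Instead it invokes \cite[Thm.~5.2]{FV1}, which for $g=0$ produces an injective ``weight'' map $\omega_G^{\delta}:\RPic\Bun_{G,0,n}^{\delta}\hookrightarrow\Omega_d^*(T_G)\subseteq\Lambda^*(T_G)$, functorial in $G$. The contraction $b\mapsto b(d^{\ss},-)$ then identifies $(\Sym^2\Lambda^*(T_{G^{\sc}}))^{\mathscr W_G}$ with $\Omega_d^*(T_G)/\Lambda^*(G^{\ab})$ (this is exactly where genericity of $d$ is used). For $n\geq1$ the $\omega$-maps are isomorphisms and one is done. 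For $n=0$ the images of $\omega_{G^{\ab}}^{\delta^{\ab}}$ and $\omega_G^{\delta}$ are cut out by the parity condition $\chi(d)\in2\bbZ$, and the paper chooses the lift $d$ to satisfy the extra condition $(*)$: $\delta^{\ab}$ is $2$-divisible $\Rightarrow$ $d$ is $2$-divisible. This forces the two parity-quotients to have the same size, and a snake-lemma argument finishes. Your proposal never isolates this $n=0$ parity issue, and the ad hoc ``exhibit a tautological line bundle realizing $\rho(b_i)$'' step would run into exactly this $2$-divisibility obstruction.

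\textbf{Where your approach can be salvaged.} Your route (b) can be made to work, and in fact more cleanly than you wrote: once you have the splitting $\iota_{\sharp}^*$ of $\ab_{\sharp}^*$, the cokernel $\cQ:=\coker(\ab_{\sharp}^*)$ is identified with the direct summand $\ker(\iota_{\sharp}^*)\subset\RPic\Bun_{G,0,n}^{\delta}$, on which restriction to the geometric generic fibre is injective by Fact~\ref{F:Pic-BunG}\eqref{F:Pic-BunG0}. On the fibre, \cite[Thm.~5.3.1]{BH10} computes the cokernel as $(\Sym^2\Lambda^*(T_{G^{\sc}}))^{\mathscr W_G}$, and the transgression provides a section; hence $\cQ\cong(\Sym^2\Lambda^*(T_{G^{\sc}}))^{\mathscr W_G}$ for all $n$ at once, with no $n=0$ case split. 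This is essentially the compatible-splitting argument from the proof of Theorem~\ref{T:PicBunGAss}, run with two rows instead of three. Your ``rank matching via Corollary~\ref{C:rank-Bil} plus rule out finite cokernel'' plan is unnecessary (and that Corollary does not appear in the paper); the splitting plus fibre-injectivity already gives the isomorphism directly.
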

\begin{proof}
We will use the notation and results from \cite[\S 5]{FV1}. Since the map $\pi_1(\ab):\pi_1(G)\xrightarrow{\cong} \pi_1(G^{\ab})=\Lambda(G^{\ab})$ is an isomorphism by the hypothesis that $\scr D(G)$ is simply connected (see \eqref{E:seq-pi1}), we can pick a lift $d\in \Lambda(T_G)$ of $\delta\in \pi_1(G)$ which is generic (see Definition/Lemma \ref{DL:gen-d}) and such that the following condition holds
\begin{equation}
\delta^{\ab} \text{ is $2$-divisible in }\Lambda(G^{\ab}) \Longrightarrow d  \text{ is $2$-divisible in }\Lambda(T_G). \tag{*}
\end{equation}
By \cite[Thm. 5.2]{FV1}, there exists a commutative diagram 
\begin{equation}\label{E:omega-g0}
\xymatrix{
\RPic(\Bun^{\delta^{\ab}}_{G^{\ab}, 0,n})\ar@{^{(}->}[r]^{\ab_\sharp^*} \ar@{^{(}->}[d]^{\omega_{G^{\ab}}^{\delta^{\ab}}} & \RPic(\Bun^{\delta}_{G, 0,n})\ar@{^{(}->}[r]^{\iota_{\sharp}^*}\ar@{^{(}->}[d]^{\omega_{G}^{\delta}} & 
\RPic(\Bun^d_{T_G, 0,n})\ar@{^{(}->}[d]^{\omega_{T_G}^{d}}\\
\Lambda^*(G^{\ab}) \ar@{^{(}->}[r] & \Omega_d^*(T_G) \ar@{^{(}->}[r] & \Lambda^*(T_G)
}
\end{equation}
where $\iota:T_G\hookrightarrow G$.

By the definition of $\Omega_d(T_G)$ and the fact that $d$ is generic (see Definition/Lemma \ref{DL:gen-d}), the contraction  homomorphism defines an isomorphism 
\begin{equation}\label{E:iso-contr}
\begin{aligned}
 (d^{\ss},-): \left(\Sym^2 \Lambda^*(T_{G^{\sc}})\right)^{\mathscr W_G} & \stackrel{\cong}{\longrightarrow} \frac{\Omega_d(T_G)}{\Lambda^*(G^{\ab})} \subseteq  \frac{\Lambda^*(T_G)}{\Lambda^*(G^{\ab})}=\Lambda^*(T_{G^{\sc}}), \\
 b &\mapsto b(d^{\ss}, -)
\end{aligned}
\end{equation}

Now, if $n\geq 1$ then the maps $\omega_{G^{\ab}}^{\delta^{\ab}}$ and $\omega_G^{\delta}$ are isomorphisms by \cite[Thm. 5.2(2)]{FV1} and we conclude by putting together \eqref{E:omega-g0} and \eqref{E:iso-contr}.
On the other hand, if $n=0$ then \cite[Thm. 5.2(2)]{FV1} gives that 
\begin{equation}\label{E:Im-diag}
\begin{sis}
& \Im(\omega_{G^{\ab}}^{\delta^{\ab}})=\{\chi \in \Lambda^*(G^{\ab})\: : \: \chi(\delta^{\ab})\in 2\bbZ\}, \\
& \Im(\omega_{G}^{\delta})=\{\chi \in \Omega^*_d(T_G)\: : \: \chi(d)\in 2\bbZ\}. \\
\end{sis}
\end{equation}
This implies that the commutative diagram of abelian groups is a pull-back diagram 
$$
\xymatrix{
\Lambda^*(G^{\ab}) \ar@{^{(}->}[r] &  \Omega^*_d(T_G) \\
 \Im(\omega_{G^{\ab}}^{\delta^{\ab}}) \ar@{^{(}->}[u] \ar@{^{(}->}[r]& \Im(\omega_{G}^{\delta}) \ar@{^{(}->}[u] 
}
$$
since if $\chi \in \Lambda^*(G^{\ab})$ then $\chi(d)=\chi(\delta^{\ab})$. Hence we get an injective homomorphism
\begin{equation}\label{E:alpha}
\alpha: \frac{\Lambda^*(G^{\ab})}{ \Im(\omega_{G^{\ab}}^{\delta^{\ab}})}\hookrightarrow  \frac{\Omega^*_d(T_G)}{\Im(\omega_{G}^{\delta})}
\end{equation}
Now if $\delta^{\ab}$ is not $2$-divisible then 
$$\frac{\Lambda^*(G^{\ab})}{ \Im(\omega_{G^{\ab}}^{\delta^{\ab}})}\cong \bbZ/2\bbZ\cong   \frac{\Omega^*_d(T_G)}{\Im(\omega_{G}^{\delta})}.$$
If, instead, $\delta^{\ab}$ is $2$-divisible then condition (*) implies that 
$$ 
\frac{\Lambda^*(G^{\ab})}{ \Im(\omega_{G^{\ab}}^{\delta^{\ab}})}=0= \frac{\Omega^*_d(T_G)}{\Im(\omega_{G}^{\delta})}.
$$
In any case, the morphism $\alpha$ of \eqref{E:alpha} is an isomorphism which implies, by the snake lemma applied to \eqref{E:Im-diag}, that 
$$
\frac{\Im(\omega_{G}^{\delta})}{\Im(\omega_{G^{\ab}}^{\delta^{\ab}})}\xrightarrow{\cong} \frac{\Omega^*_d(T_G)}{\Lambda^*(G^{\ab})}. 
$$
Hence we get the conclusion using again \eqref{E:omega-g0} and \eqref{E:iso-contr}.
\end{proof}

A second case where \eqref{E:diag-Ab} is a push-out diagram is described in the following 

\begin{teo}\label{T:PicBunGtf}
Let $G$ be a reductive group and fix  $\delta\in\pi_1(G)$.  If the family $C/S$ has genus $g(C/S)>0$ and it satisfies the following properties
\begin{enumerate}[(i)]
\item $\End(J_{C_{\ov \eta}}) =\bbZ$ and $\RPic(C/S)=\Pic_{C/S}(S)$;
\item $\RPic^0(C/S)$ is torsion-free;
\end{enumerate}
then  the commutative diagram \eqref{E:diag-Ab} is a push-out diagram. 
\end{teo}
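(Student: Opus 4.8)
The plan is to build on the reduction already made in the excerpt: by Lemma \ref{L:push} (applied to the commutative diagram \eqref{E:diag-Ab}), it is a push-out diagram if and only if the induced homomorphism $\coker(\Sym^2 \Lambda_{\ab}^*)\to \coker(\ab_{\sharp}^*)$ is an isomorphism, and by Lemma \ref{L:coker-sym}\eqref{L:coker-sym1} the source of this map is $L:=\Bil^{s,\ev}(\Lambda(T_{\scr D(G)})\vert \Lambda(T_{G^{\ss}}))^{\scr W_G}$. So I must prove that the transgression-induced map $L\to\coker(\ab_{\sharp}^*)$ is both injective and surjective. I work with relative Picard groups (where $\coker(\ab_\sharp^*)$ is unchanged) and abbreviate $P_G:=\RPic\Bun_G^{\delta}(C/S)$, $P_{G^{\ab}}:=\RPic\Bun_{G^{\ab}}^{\delta^{\ab}}(C/S)$, $P_T:=\RPic\Bun_{T_G}^d(C/S)$ for a lift $d$ of $\delta$. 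The basic tools are: hypothesis (i), which via Theorem \ref{T:all-taut} makes $P_T$ and $P_{G^{\ab}}$ purely tautological, hence equipped with the exact sequences of Theorem \ref{T:PicBunT>0}; the chain of injections $P_{G^{\ab}}\xrightarrow{\ab_\sharp^*}P_G\xrightarrow{\iota_\sharp^*}P_T$ from Fact \ref{F:Pic-BunG}\eqref{F:Pic-BunG1} and Lemma \ref{L:inj-ab*}; and functoriality in the torus of the maps $i$, $\tau$, $\gamma$ and of the tautological and Deligne-pairing bundles, applied to the surjection $\ov\iota\colon T_G\twoheadrightarrow G^{\ab}$, for which $\ov\iota_\sharp^*=\iota_\sharp^*\circ\ab_\sharp^*$.

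For \emph{injectivity}, suppose $b\in(\Sym^2\Lambda^*(T_G))^{\scr W_G}$ has $\tau_G^\delta(b)=\ab_\sharp^*(m)$ for some $m\in P_{G^{\ab}}$. Applying $\iota_\sharp^*$ gives $\tau_{T_G}^d(b)=\ov\iota_\sharp^*(m)$, and applying $\gamma_{T_G}^d$ (which is functorial and satisfies $\gamma_{T_G}^d\circ\tau_{T_G}^d=\psi$ and $\gamma_{T_G}^d\circ\ov\iota_\sharp^*=\ov\iota^*\circ\gamma_{G^{\ab}}^{\delta^{\ab}}$) shows that $b$, read as a bilinear form on $\Lambda(T_G)$, lies in $\ov\iota^*(\Bil^s(\Lambda(G^{\ab})))$; in particular it vanishes on $\Lambda(T_{\scr D(G)})=\ker\Lambda_{\ab}$, so it descends to an even integral symmetric form on $\Lambda(G^{\ab})$, i.e. $b\in\Sym^2\Lambda_{\ab}^*(\Sym^2\Lambda^*(G^{\ab}))$ and its class in $L$ is zero. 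This step uses only hypothesis (i) and the short exact sequences of Theorem \ref{T:PicBunT>0}.

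\emph{Surjectivity} is the heart of the matter, and it is where hypothesis (ii) is indispensable --- the statement genuinely fails for a single curve, precisely because $\RPic^0$ is then divisible. The key input is a repackaging of the Biswas--Hoffmann description of $\Pic\Bun_G^\delta$ for the curve $C_{\ov\eta}$ (which satisfies $\End(J_{C_{\ov\eta}})=\bbZ$): modulo $\ov\iota_\sharp^*\RPic^{\taut}\Bun_{G^{\ab}}^{\delta^{\ab}}(C_{\ov\eta})$ and $\tau_{T_G}^d(C_{\ov\eta})((\Sym^2\Lambda^*(T_G))^{\scr W_G})$, every element of $\iota_\sharp^*\Pic\Bun_G^\delta(C_{\ov\eta})$ lies in $i_{T_G}^d(C_{\ov\eta})(\xi_0)$ for some $\xi_0\in\Lambda^*(T_G)\otimes\Pic(C_{\ov\eta})$ whose image in $\Lambda^*(T_{\scr D(G)})\otimes\Pic(C_{\ov\eta})=\big(\Lambda^*(T_G)/\Lambda_{\ab}^*\Lambda^*(G^{\ab})\big)\otimes\Pic(C_{\ov\eta})$ is torsion (this is the sense in which the non--simply--connected part of $\Pic\Bun_G$ is governed by the torsion of $\Pic^0$). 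Granting this, take $\mathcal L\in P_G$; restricting $\iota_\sharp^*\mathcal L$ to $C_{\ov\eta}$ and using the surjectivity of $\gamma_{G^{\ab}}^{\delta^{\ab}}$ (Theorem \ref{T:PicBunT>0} for the torus $G^{\ab}$), one writes $\iota_\sharp^*\mathcal L=i_{T_G}^d(\xi)+\tau_{T_G}^d(b)+\ov\iota_\sharp^*(m)$ with $b\in(\Sym^2\Lambda^*(T_G))^{\scr W_G}$, $m\in P_{G^{\ab}}$, and $\xi\in\Lambda^*(T_G)\otimes\RPic(C/S)$, the last membership following from $\gamma_{T_G}^d(i_{T_G}^d(\xi))=0$ and $\ker\gamma_{T_G}^d=\Im i_{T_G}^d$ over $C/S$. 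Comparing with the generic-fibre decomposition, the image of $\xi$ in $\Lambda^*(T_{\scr D(G)})\otimes\Pic(C_{\ov\eta})$ equals that of $\xi_0$, hence is torsion; but $\RPic(C/S)=\Pic_{C/S}(S)\hookrightarrow\Pic(C_{\ov\eta})$ by (i), and hypothesis (ii) forces $\RPic^0(C/S)$, hence $\RPic(C/S)$ (an extension of a free group by it), hence $\Lambda^*(T_{\scr D(G)})\otimes\RPic(C/S)$, to be torsion-free; so the image of $\xi$ there vanishes, i.e. $\xi\in\Lambda_{\ab}^*\Lambda^*(G^{\ab})\otimes\RPic(C/S)$ and $i_{T_G}^d(\xi)=\ov\iota_\sharp^*$ of an element of $P_{G^{\ab}}$. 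Hence $\iota_\sharp^*\mathcal L\in\iota_\sharp^*\big(\ab_\sharp^*P_{G^{\ab}}+\tau_G^\delta((\Sym^2\Lambda^*(T_G))^{\scr W_G})\big)$, and injectivity of $\iota_\sharp^*$ gives $\mathcal L\in\ab_\sharp^*P_{G^{\ab}}+\tau_G^\delta((\Sym^2\Lambda^*(T_G))^{\scr W_G})$, which is precisely the required surjectivity.

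The main obstacle is the ``key input'' above: precisely isolating, inside $\iota_\sharp^*\Pic\Bun_G^\delta(C_{\ov\eta})$, the transgression part, the part pulled back from $\Bun_{G^{\ab}}^{\delta^{\ab}}$, and a residual Deligne-pairing part, and proving that the residual part, read modulo the characters of $G$, is torsion (of order controlled by $\pi_1(\scr D(G))$). This is a statement about a single curve with $\End(J)=\bbZ$; extracting it cleanly from \cite{BH10} (or reproving it in the form above) is the technical core, while everything else is formal manipulation of the exact sequences of Theorem \ref{T:PicBunT>0} together with the injectivity of restriction to the geometric generic fibre (Fact \ref{F:Pic-BunG}\eqref{F:Pic-BunG0}).
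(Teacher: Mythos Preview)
Your approach differs substantially from the paper's, and the surjectivity half has a genuine gap.

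\textbf{What the paper does.} The paper never attempts to decompose individual line bundles via the geometric generic fibre. Instead, it uses hypothesis (i) and Theorem~\ref{T:all-taut} to put a $\scr W_G$-action on $\RPic\Bun_{T_G}^d(C/S)$ (acting through the character in each tautological generator). It then shows, citing \cite[Lemma~5.3.3]{FV1}, that the $\scr W_G$-invariants already form the push-out of \eqref{E:push-diag}. What remains is to prove that the inclusion $\psi\colon\big(\Pic\Bun_{T_G}^d(C/S)\big)^{\scr W_G}\hookrightarrow \Pic\Bun_G^\delta(C/S)$ is an isomorphism; this is done by showing $\coker\psi$ is simultaneously torsion (a rank count, \cite[Lemma~5.3.4]{FV1}) and torsion-free (since $\coker\psi$ embeds into $\RPic\Bun_{T_G}^d(C/S)/(\text{invariants})$, which is torsion-free by \cite[Lemma~5.3.6]{FV1} once $\RPic\Bun_{T_G}^d(C/S)$ itself is torsion-free --- and this is exactly where hypothesis (ii) enters, via the middle row of \eqref{E:3seq}). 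No appeal to the detailed Biswas--Hoffmann structure over $C_{\ov\eta}$ is needed.

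\textbf{Your injectivity argument} is correct and essentially the same computation that underlies the paper's identification of the invariants with the push-out.

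\textbf{The gap in your surjectivity argument} is precisely the ``key input'' you flag: that for a single curve $C_{\ov\eta}$ with $\End(J_{C_{\ov\eta}})=\bbZ$ and arbitrary reductive $G$, every element of $\iota_\sharp^*\Pic\Bun_G^\delta(C_{\ov\eta})$ is, modulo invariant transgressions and pull-backs from $G^{\ab}$, of the form $i_{T_G}^d(\xi_0)$ with $\xi_0$ having torsion image in $\Lambda^*(T_{\scr D(G)})\otimes\Pic(C_{\ov\eta})$. This is not a formality. It would require knowing that $\gamma_{T_G}^d(\iota_\sharp^*\mathcal L)\in\Bil^{s,\scr D-\ev}(\Lambda(T_G))^{\scr W_G}$ for \emph{every} $\mathcal L\in\Pic\Bun_G^\delta(C_{\ov\eta})$ --- but over a single curve the push-out description fails (as the paper notes after Theorem~\ref{Main:PicG>0}), and the map $\gamma_G^\delta$ of Theorem~\ref{T:altpres} is only constructed \emph{after} the push-out is established, so you cannot invoke it. One can try to argue indirectly: the rank bound of \cite[Lemma~5.3.4]{FV1} makes $\iota_\sharp^*\Pic\Bun_G^\delta(C_{\ov\eta})/(\text{invariants})$ finite, hence its image under $\gamma_{T_G}^d$ is torsion in $\Bil^s(\Lambda(T_G))/\Bil^{s,\scr D-\ev}(\Lambda(T_G))^{\scr W_G}$; but the latter quotient is \emph{not} torsion-free ($\Bil^{s,\scr D-\ev}$ is not saturated in $\Bil^s$), so this does not immediately give what you need. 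Extracting the statement from \cite{BH10} is possible but is real work that you have not supplied.

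The paper's Weyl-invariants route sidesteps this entirely: rather than controlling $\iota_\sharp^*\Pic\Bun_G^\delta(C_{\ov\eta})$ from the inside, it bounds $\Pic\Bun_G^\delta(C/S)$ from the outside via the torsion-free module $\RPic\Bun_{T_G}^d(C/S)$, and the finiteness of $\coker\psi$ together with its embedding in a torsion-free group finishes the job without ever needing the fine structure at the generic fibre.
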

This Theorem was proved for the universal family $\cC_{g,n}\to \cM_{g,n}$ with  $g\geq 1$ (which satisfy the assumptions, by Remarks \ref{R:eqRPic} and \ref{R:NSjump}\eqref{R:NSjump2}  and the Franchetta's conjecture)  in \cite[Thm. C(2)]{FV1}.
\begin{proof}
The proof follows the same lines of the proof of \cite[Thm. C(2)]{FV1}, as we now indicate. 

First of all, because of the assumptions,  the Picard group of $\Bun_{T_G}^d(C/S)$ (where $d\in \pi_1(T_G)$ is a fixed lift of $\delta\in \pi_1(G)$) is generated by tautological 
classes by Theorem \ref{T:all-taut}. Hence, using Theorem \ref{T:PicBunT>0}, we can define an action of the Weyl group $\scr W_G$ on $\Pic^d(\Bun_{T_G}^d(C/S))$ by setting 
$$
w\cdot d_{\wh \pi}(\cL_{\chi}(M)):=d_{\wh \pi}(\cL_{w\cdot \chi}(M)) \quad \text{ for any }\chi\in \Lambda^*(T) \: \text{ and  any } M\in \Pic(C/S),
$$
in such a way that the diagram \eqref{E:3seq} for $\RPic^d(\Bun_{T_G}^d(C/S))$  is $\scr W_G$-equivariant. 

Now, using the bottom exact sequence of \eqref{E:3seq} and arguing as in the proof of \cite[Lemma 5.3.3]{FV1} for the case $m=2$, we get that the subgroup of invariants of the above action of $\scr W_G$ sit in the following push-out diagram 
\begin{equation}\label{E:push-inv}
\xymatrix{
 \Sym^2 \Lambda^*(G^{\ab}) \ar@{^{(}->}[r]^{\Sym^2 \Lambda_{\ab}^*} \ar[d]^{\tau_{G^{\ab}}^{\delta^{\ab}}}& \left(\Sym^2 \Lambda^*(T_{G})\right)^{\mathscr W_G} \ar[d]^{\tau_{T_G}^d}\\
   \Pic \Bun_{G_{\ab}}^{\delta^{\ab}}(C/S)\ar@{^{(}->}[r]^{(\ab\circ \iota)_{\sharp}^*} &  \left(\Pic \Bun_{T_G}^{d}(C/S)\right)^{\scr W_G}
}
\end{equation}

Therefore, it remains to show that  the natural inclusion
\begin{equation}\label{E:incl-inv}
\psi:\left(\Pic \Bun_{T_G}^{d}(C/S)\right)^{\scr W_G} \hookrightarrow \Pic \Bun_G^{\delta}(C/S) 
\end{equation}
induced by the push-out diagram \eqref{E:push-inv} and the commutative diagram \eqref{E:push-diag}, is an isomorphism. This will be achieved in two steps:

\un{Step 1}: $\coker \psi$ is torsion.

Indeed, consider the two inclusions
$$
\ab_{\sharp}^*: \Pic \Bun_{G_{\ab}}^{\delta^{\ab}}(C/S)\stackrel{(\ab\circ \iota)_{\sharp}^*}{\hookrightarrow}  \left(\Pic \Bun_{T_G}^{d}(C/S)\right)^{\scr W_G}\stackrel{\psi}{\hookrightarrow} \Pic \Bun_G^{\delta}(C/S).
$$ 
We get an induced exact sequence 
$$
0\to \frac{\left(\Pic \Bun_{T_G}^{d}(C/S)\right)^{\scr W_G}}{\Pic \Bun_{G_{\ab}}^{\delta^{\ab}}(C/S)}\to \frac{\Pic \Bun_G^{\delta}(C/S)}{\Pic \Bun_{G_{\ab}}^{\delta^{\ab}}(C/S)}\to \coker \psi\to 0.
$$
By the push-out diagram \eqref{E:push-inv}, and using Lemmas \ref{L:push} and \ref{L:coker-sym}, we get that 
$$
\frac{\left(\Pic \Bun_{T_G}^{d}(C/S)\right)^{\scr W_G}}{\Pic \Bun_{G_{\ab}}^{\delta^{\ab}}(C/S)}\cong \bbZ^s,
$$
where $s$ is the number of quasi-simple factors of $G^{\sc}$. Moreover, arguing as in the proof of \cite[Lemma 5.3.4]{FV1}, we deduce that 
$$
\rk \frac{\Pic \Bun_G^{\delta}(C/S)}{\Pic \Bun_{G_{\ab}}^{\delta^{\ab}}(C/S)}\leq s.
$$
By putting everything together, we deduce that $\coker \psi$ is a finite group.

\un{Step 2}: $\coker \psi$ is torsion-free.

Indeed, the natural inclusion of the subgroup of $\scr W_G$-invariants of $\Pic \Bun_{T_G}^{d}(C/S)$ factors as 
$$\left(\Pic \Bun_{T_G}^{d}(C/S)\right)^{\scr W_G} \stackrel{\psi}{\hookrightarrow} \Pic \Bun_G^{\delta}(C/S) \stackrel{\iota_\sharp^*}{\hookrightarrow} \Pic \Bun_{T_G}^{d}(C/S),$$
where $\iota_{\sharp}^*$ is injective by Fact \ref{F:Pic-BunG}\eqref{F:Pic-BunG1}.
Therefore $\coker \psi$ injects into the quotient 
$$
Q:=\frac{\Pic \Bun_{T_G}^{d}(C/S)}{\left(\Pic \Bun_{T_G}^{d}(C/S)\right)^{\scr W_G}}=\frac{\RPic \Bun_{T_G}^{d}(C/S)}{\left(\RPic \Bun_{T_G}^{d}(C/S)\right)^{\scr W_G}}
$$
Because of our three assumptions on the family $C/S$, Theorems \ref{T:all-taut} and \ref{T:PicBunT>0} imply that $\RPic \Bun_{T_G}^{d}(C/S)$ is torsion-free. Hence, the quotient $Q$ is torsion-free by \cite[Lemma 5.3.6]{FV1}, which implies that $\coker \psi$ is torsion-free.

\end{proof}

We now explain how to obtain two alternative presentations of the relative Picard group of $\Bun_G^{\delta}(C/S)$, for a family of curves $C/S$ of positive genus and such that 
 $\End(J_{C_{\ov \eta}}) =\bbZ$ and $\RPic(C/S)=\Pic_{C/S}(S)$, and assuming that \eqref{E:push-diag} is a push-out diagram.

We first need the following 

\begin{defin}\label{D:NS}\cite[Cor. 2.6, Def. 3.9]{FV2}
Let $G$ be a reductive group with maximal torus $T_G$ and Weyl group $\scr W_G$, and fix $\delta\in \pi_1(G)$.
\begin{enumerate}
\item 
$
 \Bil^{s,\scr D-\ev}(\Lambda(T_G))^{\scr W_G}:=\left\{b\in  \Bil^{s}(\Lambda(T_G))^{\scr W_G}\: : b_{|\Lambda(T_{\scr D(G)})\times \Lambda(T_{\scr D(G)})} \text{ is even}\right\}.
$
\item The \emph{Neron-Severi} group of $\Bun_G^{\delta}$ is  the subgroup 
$$\NS(\Bun_G^{\delta})\subseteq   \frac{\Lambda^*(T_G)}{\Lambda^*(T_{G^{\ad}})} \oplus   \Bil^{s,\scr D-\ev}(\Lambda(T_G))^{\scr W_G}$$
consisting of all the elements $([\chi], b)$ such that 
\begin{equation}\label{E:NS-BunG}
\left[\chi_{|\Lambda(T_{\scr D(G)})}\right]=b(\delta\otimes -)_{|\Lambda(T_{\scr D(G)})}:=\left[b(d\otimes -)_{|\Lambda(T_{\scr D(G)})} \right] \in \frac{\Lambda^*(T_{\scr D(G)})}{\Lambda^*(T_{G^{\ad}})},
\end{equation}
 where $d\in \Lambda(T_G)$ is any lift of $\delta$.
\end{enumerate}
\end{defin}
For a more detailed study of the Neron-Severi group $\NS(\Bun_G^{\delta})$, see \cite[Prop. 3.11]{FV2}.

\begin{teo}\label{T:altpres}
Assume that the family $C/S$ has genus $g(C/S)>0$ and it satisfies the following conditions:
\begin{enumerate}[(i)]
\item $\End(J_{C_{\ov \eta}}) =\bbZ$ and $\RPic(C/S)=\Pic_{C/S}(S)$;
\item either $\RPic^0(C/S)$ is torsion-free or $\scr D(G)$ is simply connected.
\end{enumerate}
Then there exists a commutative diagram, functorial in $S$ and in $G$,
\begin{equation}\label{E:diagpres}
\xymatrix{
0 \ar[r] &  \Lambda^*(G^{\ab})\otimes \RPic^0(C/S)  \ar[r]^(0.55){j_G^{\delta}} \ar@{^{(}->}[d] & \RPic\Bun_{G}^{\delta}(C/S)\ar[r]^{\omega_G^{\delta}\oplus \gamma_G^{\delta}}  \ar@{=}[d]  &\NS(\Bun_G^{\delta})\ \ar@{->>}[d]^{\res_{G}^{\NS}} \\
0  \ar[r] &  \Lambda^*(G^{\ab})\otimes \RPic(C/S)  \ar[r]^(0.55){i_G^{\delta}} &  \RPic\Bun_{G}^{\delta}(C/S)  \ar[r]^(0.45){\gamma_G^{\delta}}  & \Bil^{s,\scr D-\ev}(\Lambda(T_G))^{\scr W_G}  \ar[r]  & 0\\
}
\end{equation}
where the rows are exact, the left vertical morphism is injective and the right vertical morphism is surjective.

Moreover, the image of $\omega_G^{\delta}\oplus \gamma_G^{\delta}$ is equal to 
\begin{equation*}
 \Im(\omega_G^{\delta}\oplus \gamma_G^{\delta})=
  \left\{([\chi], b)\in \NS(\Bun_G^{\delta}) \: : \: 
 \begin{aligned}
 &  \left[\chi(x)-b(\delta\otimes x)\right]+(g-1)b(x\otimes x) \\
 & \text{ is divisible by } \delta(C/S),  \text{ for any } x\in \Lambda(T_G) 
 \end{aligned}
 \right\}
 \end{equation*}
\end{teo}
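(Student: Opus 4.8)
The plan is to reduce the whole statement to the case of the torus $T=G^{\ab}$, which is completely understood via Theorems \ref{T:PicBunT>0} and \ref{T:all-taut}, by exploiting the push-out diagram \eqref{E:push-diag}. Under the present hypotheses that diagram is a push-out: this is Theorem \ref{T:PicBunGAss} when $\scr D(G)$ is simply connected, and Theorem \ref{T:PicBunGtf} when $\RPic^0(C/S)$ is torsion-free (its remaining hypotheses being supplied by (i)). Moreover, by (i) and Theorem \ref{T:all-taut} we have $\Pic\Bun_{T_G}^d(C/S)=\Pic^{\taut}\Bun_{T_G}^d(C/S)$ for any lift $d\in\Lambda(T_G)$ of $\delta$, so that the Weyl group acts on $\RPic\Bun_{T_G}^d(C/S)$ and, as in the proof of Theorem \ref{T:PicBunGtf}, $\iota_\sharp^*$ realizes $\RPic\Bun_G^\delta(C/S)$ inside $(\RPic\Bun_{T_G}^d(C/S))^{\scr W_G}$. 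I will use these facts together with the group-theoretic exact sequences of Lemma \ref{L:coker-sym}, Definition \ref{D:NS} and \cite[Cor. 2.6, Prop. 3.11]{FV2}.

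First I would build the bottom row of \eqref{E:diagpres}. Setting $i_G^\delta:=\ab_\sharp^*\circ i_{G^{\ab}}^{\delta^{\ab}}$ (injective by Lemma \ref{L:inj-ab*} and Theorem \ref{T:PicBunT>0}) and $\gamma_G^\delta:=\gamma_{T_G}^d\circ\iota_\sharp^*$ (which lands in $\Bil^{s,\scr D-\ev}(\Lambda(T_G))^{\scr W_G}$, as one checks on tautological and transgression generators), I would push out the bottom exact row of \eqref{E:3seq} for $G^{\ab}$ along $\Sym^2\Lambda_{\ab}^*$. Since $\gamma_{G^{\ab}}^{\delta^{\ab}}\circ\tau_{G^{\ab}}^{\delta^{\ab}}$ is the inclusion $\Bil^{s,\ev}(\Lambda(G^{\ab}))\hookrightarrow\Bil^s(\Lambda(G^{\ab}))$, the push-out — which by \eqref{E:push-diag} is $\RPic\Bun_G^\delta(C/S)$ — has kernel $\Im i_G^\delta=\Lambda^*(G^{\ab})\otimes\RPic(C/S)$ and cokernel the amalgam of $\Bil^s(\Lambda(G^{\ab}))$ and $(\Sym^2\Lambda^*(T_G))^{\scr W_G}$ over $\Sym^2\Lambda^*(G^{\ab})$; by Lemma \ref{L:coker-sym}\eqref{L:coker-sym1} and \cite[Cor. 2.6]{FV2} this amalgam is exactly $\Bil^{s,\scr D-\ev}(\Lambda(T_G))^{\scr W_G}$, and the quotient map is $\gamma_G^\delta$.

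Next I would treat the top row together with the image. Taking $\omega_G^\delta$ to be the weight homomorphism and $j_G^\delta:=i_G^\delta|_{\Lambda^*(G^{\ab})\otimes\RPic^0(C/S)}$, I would check that $(\omega_G^\delta,\gamma_G^\delta)$ lands in $\NS(\Bun_G^\delta)$ on the generators $\ab_\sharp^*(d_{\wh\pi}(\cL_\chi(M)))$ (using \eqref{E:wgt-det} and $\chi\in\Lambda^*(G^{\ab})$, so both sides of \eqref{E:NS-BunG} vanish on $\Lambda(T_{\scr D(G)})$) and $\tau_G^\delta(t)$ (where $\omega_G^\delta(\tau_G^\delta(t))=[t(d\otimes-)]$ and $\gamma_G^\delta(\tau_G^\delta(t))=t$, so \eqref{E:NS-BunG} is tautological) — alternatively this is \cite[Prop. 3.11]{FV2}. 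Since $\res_G^{\NS}$ is the (surjective) second projection and $\res_G^{\NS}\circ(\omega_G^\delta\oplus\gamma_G^\delta)=\gamma_G^\delta$, the right square commutes and $\ker(\omega_G^\delta\oplus\gamma_G^\delta)\subseteq\ker\gamma_G^\delta=\Im i_G^\delta$; on $i_G^\delta(\chi\otimes M)$ the weight is the class of $\deg_\pi(M)\chi$, which vanishes iff $\deg_\pi(M)=0$ because $\Lambda^*(G^{\ab})$ injects into $\Lambda^*(T_G)/\Lambda^*(T_{G^{\ad}})$; hence the kernel is $\Im j_G^\delta$, with $j_G^\delta$ injective. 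For the image description, the inclusion "$\subseteq$" is tested on the two families of generators: on $\tau_G^\delta(t)$ the term $\chi(x)-b(\delta\otimes x)$ vanishes and $(g-1)t(x\otimes x)$ is a multiple of $2g-2$ (as $t$ is even), hence of $\delta(C/S)$ by Remark \ref{R:deltaCS}\eqref{R:deltaCS1}; on $\ab_\sharp^*(d_{\wh\pi}(\cL_\chi(M)))$ the same reduction as in the proof of Theorem \ref{T:PicBunT>0}\eqref{T:PicBunT>02} yields $\deg_\pi(M)\chi(x)+(g-1)(\chi(x)^2-\chi(x))$, divisible by $\delta(C/S)$. For "$\supseteq$", given $([\chi],b)$ in the right-hand set I would split $b=b_{\ab}+\res_{\scr D}(t)$ with $b_{\ab}\in\Bil^s(\Lambda(G^{\ab}))$ and $t\in(\Sym^2\Lambda^*(T_G))^{\scr W_G}$ via \cite[Cor. 2.6]{FV2}, subtract $\tau_G^\delta(t)$, and invoke the torus image description of Theorem \ref{T:PicBunT>0}\eqref{T:PicBunT>02} for $G^{\ab}$.

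The main obstacle is the bookkeeping in this last reduction: one must verify that the divisibility condition "for all $x\in\Lambda(T_G)$" is well-posed (independent of the representative $\chi$ of $\omega_G^\delta(L)$ and of the lift $d$ of $\delta$) and that, on the directions $x\in\Lambda(T_{\scr D(G)})$, it is automatic from $\NS$-membership \eqref{E:NS-BunG}, while on the directions in $\Lambda(G^{\ab})$ it reduces to the torus condition. This is precisely where the integrality properties of $\scr W_G$-invariant even symmetric forms and the rational decomposition $\Lambda(T_G)_{\bbQ}=\Lambda(T_G)_{\bbQ}^{\ab}\oplus\Lambda(T_G)_{\bbQ}^{\ss}$ do the real work, in close parallel with \cite[Thm. 3.6]{FV2}. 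Finally, functoriality in $S$ and $G$ is inherited from that of all the ingredients (Theorems \ref{T:PicBunT>0}, \ref{T:PicBunGAss}, \ref{T:PicBunGtf}, Lemma \ref{L:coker-sym} and the push-out \eqref{E:push-diag}).
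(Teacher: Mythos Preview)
Your proposal is correct and follows essentially the same route as the paper. The paper's own proof is extremely terse: it records that the push-out \eqref{E:push-diag} holds by Theorems \ref{T:PicBunGAss} and \ref{T:PicBunGtf}, that $\Pic\Bun_{G^{\ab}}^{\delta^{\ab}}(C/S)$ is tautological by Theorem \ref{T:all-taut}, defines $i_G^{\delta}$, $j_G^{\delta}$, $\omega_G^{\delta}$ exactly as you do, defines $\gamma_G^{\delta}$ via the push-out by the two conditions $\gamma_G^{\delta}\circ\tau_G^{\delta}=\alpha$ and $\gamma_G^{\delta}\circ\ab_\sharp^*=B_{\ab}^*\circ\gamma_{G^{\ab}}^{\delta^{\ab}}$, and then simply writes ``the proof is deduced from Theorems \ref{T:PicBunT>0}, \ref{T:all-taut} and \ref{T:PicBunGAss}, arguing as in the proofs of \cite[Thm.~3.6, Thm.~3.12]{FV2}''. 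Your write-up is precisely an unpacking of that deferral: the amalgam computation for the bottom row is \cite[Thm.~3.6]{FV2} and Lemma \ref{L:coker-sym}, and the image bookkeeping you flag as the ``main obstacle'' is what \cite[Thm.~3.12]{FV2} carries out. The only cosmetic difference is that you describe $\gamma_G^{\delta}$ as $\gamma_{T_G}^{d}\circ\iota_\sharp^*$ rather than by its push-out characterization; the two agree once one knows (Theorem \ref{T:all-taut}) that $\RPic\Bun_{T_G}^{d}(C/S)$ is tautological.
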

This Theorem was proved for the universal family $\cC_{g,n}\to \cM_{g,n}$ for $g\geq 1$ (and an arbitrary reductive group) in \cite[Thm. 1.1]{FV2} and it can be deduced  for a fixed curve $C$ over $k=\ov k$ (in the case $g(C)>0$, $\End(J_{C_{\ov\eta}})=\bbZ$  and $\scr D(G)$ simply connected) from  \cite[Thm. 5.3.1]{BH10}. 
\begin{proof}
Observe that our assumptions imply that \eqref{E:push-diag} is a push-out diagram (by Theorems \ref{T:PicBunGAss} and \ref{T:PicBunGtf}), and that $\Pic \Bun_{G^{\ab}}^{\delta^{\ab}}(C/S)= \Pic^{\taut} \Bun_{G^{\ab}}^{\delta^{\ab}}(C/S)$ by Theorem \ref{T:all-taut}.

Let us first describe the maps appearing in the above  diagram \eqref{E:diagpres}:

$\bullet$ the map $\iota_G^{\delta}$ is defined by 
$$
i_G^{\delta}(\chi\otimes M):=\langle \cL_{\chi}, M \rangle_{\wh \pi} 
$$
and $j_G^{\delta}$ is the restriction of $i_G^{\delta}$.

$\bullet$ the morphism $\omega_G^{\delta}$ is induced by the weight homomorphism 
$$
\w_G^{\delta}:\Pic \Bun_G^{\delta}(C/S)\to \Lambda^*(\scr Z(G))=\frac{\Lambda^*(T_G)}{\Lambda^*(T_{G^{\ad}})}
$$
coming from the exact sequence \eqref{E:seqLeray}.

$\bullet$ the morphism $\gamma_G^{\delta}$ is uniquely determined, using that the diagram \eqref{E:push-diag} is a push-out, by 
\begin{enumerate}[(i)]
\item \label{D:gammaG1} The composition $\gamma_G^{\delta}\circ  \tau_G^{\delta}$ is the inclusion
$$
\alpha: \left(\Sym^2\Lambda^*(T_G)\right)^{\scr W_G}\cong  \Bil^{s,\ev}(\Lambda(T_G))^{\scr W_G}  \hookrightarrow \Bil^{s, \scr D-\ev}(\Lambda(T_G))^{\scr W_G}.
$$ 
\item\label{D:gammaG2} The composition $\gamma_G^{\delta}\circ \ab_\#^*$ is equal to the following composition 
$$
 \RPic^{\taut} \Bun_{G^{\ab}}^{\delta^{\ab}}(C/S)
 \xrightarrow{\gamma_{G^{\ab}}^{\delta^{\ab}}} (\Lambda^*(G^{\ab})\otimes \Lambda^*(G^{\ab}))^s \cong \Bil^{s}(\Lambda(G^{\ab})) \xrightarrow{B_{\ab}^*}   \Bil^{s, \scr D-\ev}(\Lambda(T_G))^{\scr W_G},
$$
where $\gamma_{G^{\ab}}^{\delta^{\ab}}$ is the homomorphism of Lemma \ref{L:gammaT} and $B_{\ab}^*$ is obtained by pull-back.
\end{enumerate}

Now the proof is deduced from Theorems \ref{T:PicBunT>0}, \ref{T:all-taut} and  \ref{T:PicBunGAss}, arguing as in the proofs of \cite[Thm. 3.6, Thm. 3.12]{FV2}.
\end{proof}

We end this subsection by describing the relative Picard group of $\Bun_G^{\delta}(C/S)$, for a family of curves $C/S$ of genus zero and an arbitrary reductive group $G$.  
We first need the following 

\begin{defin}(see \cite[Def. 5.2.1]{BH10})
Let $G$ be a reductive group and let $\delta\in \pi_1(G)$.
Define 
$$
\NS \Bun_G^{\delta}(\bbP^1):=\left\{(\chi, b): l+b(d^{\ss},-)\in \Lambda^*(T_G)\right\}\subseteq \Lambda^*(\scr Z(G))\times \left(\Sym^2 \Lambda^*(T_{G^{\sc}})\right)^{\scr W_G},
$$
where $d^{\ss}$ is the image in $\Lambda(T_{G^{\ss}})$ of any lift $d\in \Lambda(T_G)$ of $\delta$, and  $l+b(d^{\ss},-)$ is interpreted as an element of 
$$\Lambda^*(\scr Z(G))_{\bbQ}\oplus \Lambda^*(T_{G^{\sc}})_{\bbQ}=\Lambda^*(T_G)_{\bbQ}.$$
\end{defin}
It is easily checked that the above definition does not depend on the choice of the lifting $d$ of $\delta$ (see \cite[Lemma 5.2.2]{BH10}) and that it is functorial in $G$ (see \cite[Def. 5.2.7]{BH10}).

\begin{teo}\label{T:PicG=0}
Let $C\to S$ be a family of curves of genus $g=0$ on a regular and integral quotient stack. Let $G$ be a reductive group, fix $\delta\in \pi_1(G)$ and set $\delta^{\ab}:=\pi_1(\ab)(\delta)\in \pi_1(G^{\ab})$.
Fix a lift $d\in \Lambda(T_G)$ of $\delta$ and denote by $d^{\ss}$ its image in $\Lambda(T_{G^{\ss}})$. 

There exists an injective homomorphism, functorial in $S$ and $G$:
$$
c_G^{\delta}(C/S)=c_G^{\delta}: \RPic \Bun_G^{\delta}(C/S)\to \NS \Bun_G^{\delta}(\bbP^1)
$$
such that 
\begin{enumerate}
\item \label{T:PicG=01} If $G=T$ is a torus then $c_G^{\delta}$ coincides with the weight homomorphism;
\item \label{T:PicG=02} the image of $c_G^{\delta}$ is equal to 
\begin{equation}\label{E:ImcTG}
\Im(c_G^{\delta})=
\begin{cases}
\NS \Bun_G^{\delta}(\bbP^1) & \text{ if } C/S  \text{ is Zariski locally trivial, } \\
\{(l,b)\in \NS \Bun_G^{\delta}(\bbP^1) : l(\delta^{\ab})+b(d^{\ss}, d^{\ss}) \text{ is even} \}& \text{ if }   C/S  \text{ is not Zariski locally trivial.} 
\end{cases}
\end{equation}
In particular, if $C/S$ is not Zariski locally trivial, then $\Im(c_G^{\delta})$ is an index two subgroup of $\NS \Bun_G^{\delta}(\bbP^1)$.
\end{enumerate}
\end{teo}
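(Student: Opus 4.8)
The plan is to realise $c_G^{\delta}$ as the operation ``restrict to the geometric generic fibre and read off the class over $\bbP^1$''. By Fact~\ref{F:Pic-BunG}\eqref{F:Pic-BunG0} the restriction map $\RPic\Bun_G^{\delta}(C/S)\hookrightarrow \Pic\Bun_G^{\delta}(C_{\ov\eta})$ is injective, and $C_{\ov\eta}$, being a genus-zero curve over an algebraically closed field, is isomorphic to $\bbP^1$; hence the $\bbP^1$-case of Biswas--Hoffmann (\cite[Def.~5.2.1, Thm.~5.3.1]{BH10}) provides a functorial (in $G$) identification $\Pic\Bun_G^{\delta}(C_{\ov\eta})\cong \NS\Bun_G^{\delta}(\bbP^1)$. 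I would define $c_G^{\delta}$ to be the composite; injectivity is then immediate, and functoriality in $S$ and $G$ follows from that of the two ingredients, using for $S$ that restriction to the geometric generic fibre is compatible with further base change while $\NS\Bun_G^{\delta}(\bbP^1)$ is a fixed, base-independent lattice. For part~\eqref{T:PicG=01}: if $G=T$ is a torus then $\NS\Bun_T^{\delta}(\bbP^1)=\Lambda^*(T)$, the Biswas--Hoffmann map is the weight isomorphism, which over the base field is the content of Theorem~\ref{T:PicBunT-0}\eqref{T:PicBunT-02}--\eqref{T:PicBunT-03} for $g=0$, and since $w_T^d$ is compatible with base change the composite is $w_T^d$ itself.

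For part~\eqref{T:PicG=02} I would reduce to two universal families. By Remark~\ref{R:deltaCS}\eqref{R:deltaCS4}, $C/S$ is Zariski-locally trivial if and only if it admits a section; when it does, $C/S$ is the pullback of $\cC_{0,1}/\cM_{0,1}$ along the modular map of the pointed family, and otherwise of $\cC_0/\cM_0$. Functoriality of $c_G^{\delta}$ in $S$ then gives $\Im(c_G^{\delta}(C/S))\supseteq \Im(c_G^{\delta}(\cC_{0,1}/\cM_{0,1}))$, resp. $\supseteq \Im(c_G^{\delta}(\cC_0/\cM_0))$. For the opposite inclusion, $\Im(c_G^{\delta}(C/S))\subseteq \NS\Bun_G^{\delta}(\bbP^1)$ always; and when $C/S$ is not Zariski-locally trivial I would restrict further to a maximal torus. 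Choosing a generic lift $d_0$ of $\delta$, Fact~\ref{F:Pic-BunG}\eqref{F:Pic-BunG1} gives an injection $\iota_\sharp^*\colon \RPic\Bun_G^{\delta}(C/S)\hookrightarrow \RPic\Bun_{T_G}^{d_0}(C/S)$ whose composite with the weight map lands in $\{\chi\in\Lambda^*(T_G):\chi(d_0)\in 2\bbZ\}$ by Theorem~\ref{T:PicBunT-0}\eqref{T:PicBunT-03}; transporting this through the Biswas--Hoffmann embedding $\NS\Bun_G^{\delta}(\bbP^1)\hookrightarrow \Lambda^*(T_G)$, $(l,b)\mapsto l+b(d_0^{\ss},-)$, a direct computation converts $\chi(d_0)\in 2\bbZ$ into $l(\delta^{\ab})+b(d^{\ss},d^{\ss})\in 2\bbZ$ (the relevant parity being independent of the chosen lift, since $d_0-d$ lies in the coroot lattice and $b$ is even there). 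So it remains to treat $\cC_{0,1}/\cM_{0,1}$ and $\cC_0/\cM_0$.

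For these I would use $\cM_0=B\PGL_2$, $\cC_0=[\bbP^1/\PGL_2]$, $\cM_{0,1}=BH_0$, $\cC_{0,1}=[\bbP^1/H_0]$ with $H_0=\Gm\ltimes\Ga=\Aut(\bbP^1,\infty)$, so that $\Bun_G^{\delta}(\cC_0/\cM_0)=[\Bun_G^{\delta}(\bbP^1)/\PGL_2]$ and $\Bun_G^{\delta}(\cC_{0,1}/\cM_{0,1})=[\Bun_G^{\delta}(\bbP^1)/H_0]$, the forgetful morphism being the quotient projection to $B\PGL_2$, resp. $BH_0$. The Leray spectral sequence for $\Gm$ along this projection, together with Fact~\ref{F:propBunG}\eqref{F:propBunG6} (which yields $(\Phi_G^{\delta})_*\Gm=\Gm$) and the fact that $\NS\Bun_G^{\delta}(\bbP^1)=\Pic\Bun_G^{\delta}(\bbP^1)$ is discrete and hence fixed by the connected group acting, gives an exact sequence $0\to \Lambda^*(Q)\to \Pic\Bun_G^{\delta}(\cC_\bullet/\cM_\bullet)\to \NS\Bun_G^{\delta}(\bbP^1)\xrightarrow{\obs} \Br(BQ)$ with $Q=\PGL_2$, resp. $H_0$; the middle arrow, modulo $\Pic(BQ)=\Lambda^*(Q)$, is exactly $c_G^{\delta}$, so $\Im(c_G^{\delta}(\cC_\bullet/\cM_\bullet))=\ker(\obs)$. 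Since $H_0$ is a special group, $\Br(BH_0)=0$ and $\Im(c_G^{\delta}(\cC_{0,1}/\cM_{0,1}))=\NS\Bun_G^{\delta}(\bbP^1)$, giving the Zariski-locally-trivial case. For $\PGL_2$ one has $\Lambda^*(\PGL_2)=0$ and $\Br(B\PGL_2)=\bbZ/2\bbZ$; naturality of the Leray sequence along the $\PGL_2$-equivariant map $\iota_\sharp\colon \Bun_{T_G}^{d_0}(\bbP^1)\to \Bun_G^{\delta}(\bbP^1)$ reduces the identification of $\obs$ to the torus, where Theorem~\ref{T:PicBunT-0}\eqref{T:PicBunT-03} forces $\obs(\chi)=\chi(d_0)\bmod 2$ on $\Lambda^*(T_G)$; substituting $\chi=l+b(d_0^{\ss},-)$ gives $\obs(l,b)=l(\delta^{\ab})+b(d^{\ss},d^{\ss})\bmod 2$, whose kernel is the asserted index-two subgroup. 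Together with the reduction of the previous paragraph this proves~\eqref{T:PicG=02}.

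I expect the crux to be this last point: showing that the descent obstruction for line bundles along $\Bun_G^{\delta}(\bbP^1)\to [\Bun_G^{\delta}(\bbP^1)/\PGL_2]$ is captured entirely by $\Br(B\PGL_2)=\bbZ/2\bbZ$ and is correctly identified, through the Biswas--Hoffmann comparison, with the parity of $l(\delta^{\ab})+b(d^{\ss},d^{\ss})$ --- concretely, that the $\PGL_2$-linearisation obstruction of the weight-$\chi$ line bundle on $\Bun_{T_G}^{d}(\bbP^1)$ is exactly $\chi(d)\bmod 2$. Everything else --- injectivity, functoriality, the torus case, and the two reductions (pullback from the universal family, and restriction to the maximal torus) --- is formal given Fact~\ref{F:Pic-BunG}, Theorem~\ref{T:PicBunT-0}, and the $\bbP^1$-results of \cite{BH10}.
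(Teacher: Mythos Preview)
Your proof follows the same architecture as the paper's: $c_G^{\delta}$ is defined as restriction to the geometric generic fibre composed with the Biswas--Hoffmann isomorphism for $\bbP^1$; the lower bound on the image comes by pullback from the universal families $\cC_{0,n}/\cM_{0,n}$ (with $n=1$ or $0$ according as $C/S$ is Zariski locally trivial or not); and the upper bound in the non--locally-trivial case comes from the commutative square with $\iota_\sharp^*$ and Theorem~\ref{T:PicBunT-0}. The paper does exactly this, the only difference being that it treats the universal families as a black box, citing \cite[Thm.~5.0.2]{FV1} for the identification of $\Im\bigl(c_G^{\delta}(\cC_{0,n}/\cM_{0,n})\bigr)$, whereas you re-derive that result via the Leray sequence for $[\Bun_G^{\delta}(\bbP^1)/Q]\to BQ$ and the computation of $\Br(BQ)$. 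Your route is more self-contained; the paper's is shorter since the cited result is already available.

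One imprecision to fix: the implication ``$H_0$ special $\Rightarrow \Br(BH_0)=0$'' is not standard. Specialness concerns $H^1$, not $H^2$. The vanishing $\Br(BH_0)=0$ is true, but the clean reason is that the quotient $H_0\twoheadrightarrow \Gm$ has unipotent kernel $\Ga$, so $BH_0\to B\Gm$ induces an isomorphism on $H^i(-,\Gm)$ for all $i$ (pushforward of $\Gm$ along $B\Ga\to\Spec k$ has no higher cohomology), and $\Br(B\Gm)=0$ over $\ov k$ (e.g.\ because every central extension of $\Gm$ by $\Gm$ splits). With that correction your Leray argument for the $n=1$ case is complete; the $n=0$ case and the naturality reduction to the torus are correct as you wrote them.
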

Note that the second subgroup in \eqref{E:ImcTG} is well-defined since if $\wt d\in \Lambda(T_G)$ is another lift of $\delta$, then its image $\wt d^{\ss}$ in $\Lambda(T_{G^{\ss}})$ is such that $\wt d^{\ss}=d^{\ss}+\epsilon$ for some $\epsilon \in \Lambda(T_{G^{\sc}})$, and we have 
$$
b(\wt d^{\ss}, \wt d^{\ss})=b(d^{\ss},d^{\ss})+2b(d^{\ss},\epsilon)+b(\epsilon,\epsilon)\in b(d^{\ss},d^{\ss})+2\bbZ,
$$
using that $b(\epsilon,\epsilon)\in 2\bbZ$ since $b$ is an even symmetric bilinear form on $\Lambda(T_{G^{\sc}})$, and that $b(d^{\ss}, \epsilon)\in \bbZ$ by \cite[Lemma 4.3.4]{BH10}.  
  
\begin{proof}
Note that $C/S$ is Zariski locally trivial if and only if $C/S$ admits a section. Therefore, for a choice of a geometric generic point $\ov \eta$ of $S$, we get the following diagram
\begin{equation}\label{E:Cart-CS}
\xymatrix{
C_{\ov \eta}\ar[r] \ar[d] \ar@{}[dr]|\square& C\ar[r] \ar[d] \ar@{}[dr]|\square & \cC_{0,n} \ar[d] \\
\ov \eta \ar[r] & S\ar[r]  & \cM_{0,n} \\
}
\end{equation}
with Cartesian squares, where 
$$n=
\begin{cases}
1 & \text{if }C/S \text{ is Zariski locally trivial,} \\
0 & \text{if }C/S \text{ is not Zariski locally trivial.}
\end{cases}
$$

The above diagram induces the following homomorphisms, functorial in $S$ and in $G$:
\begin{equation}\label{E:seq-hom}
\RPic\Bun_{G,0,n}^{\delta} \hookrightarrow \RPic \Bun_G^{\delta}(C/S) \hookrightarrow \Pic \Bun_G^{\delta}(\bbP^1_{\ov \eta})\xrightarrow[\cong]{c_G^{\delta}(\bbP^1_{\ov \eta})} \NS\Bun_G^{\delta}(\bbP^1),
\end{equation}
where the first two pullback homomorphisms are injective by Fact \ref{F:Pic-BunG}\eqref{F:Pic-BunG0} (note that $\ov \eta$ is also a geometric generic point of $\cM_{0,n}$), and the last isomorphism is the one of \cite[Thm. 5.3.1(iii)]{BH10}. Therefore, we get our desired injective homomorphism, functorial in $S$ and in $G$
$$
c_G^{\delta}(C/S): \RPic \Bun_G^{\delta}(C/S) \hookrightarrow \Pic \Bun_G^{\delta}(\bbP^1_{\ov \eta})\xrightarrow[\cong]{c_G^{\delta}(\bbP^1_{\ov \eta})} \NS\Bun_G^{\delta}(\bbP^1)
$$
which coincides with the weight homomorphism if $G$ is a torus, because this is the case for the isomorphism $c_G^{\delta}(\bbP^1_{\ov \eta})$ (see the proof of \cite[Thm. 5.3.1(iii)]{BH10}).

It remains to prove part \eqref{T:PicG=02}, by distinguishing the two cases $C/S$ Zariski locally trivial or not. 

If $C/S$ is Zariski locally trivial (and hence $n=1$ in the above diagram), then we get that $c_G^{\delta}(C/S)$ is surjective since the composite homomorphism in  \eqref{E:seq-hom} 
$$
c_G^{\delta}(\cC_{0,1}/\cM_{0,1}): \RPic\Bun_{G,0,1}^{\delta} \to  \NS\Bun_G^{\delta}(\bbP^1)
$$
is surjective by \cite[Thm. 5.0.2]{FV1}.

If $C/S$ is not Zariski locally trivial (and hence $n=0$ in the above diagram), then we get that 
\begin{equation*}\label{E:incl1}
\{(l,b)\in \NS \Bun_G^{\delta}(\bbP^1) : l(\delta^{\ab})+b(d^{\ss}, d^{\ss}) \text{ is even}\}\subseteq \Im(c_G^{\delta}(C/S)),
\end{equation*}
since the same is true for the image of $c_G^{\delta}(\cC_0/\cM_0)$ by \cite[Thm. 5.0.2]{FV1}. In order to prove the other inclusion, consider the following commutative diagram 
(for a chosen lift $d\in \Lambda(T_G)$ of $\delta$)
\begin{equation*}
\xymatrix{ 
 \RPic \Bun_G^{\delta}(C/S) \ar[r]^{\iota_{\sharp}^*} \ar[d]^{c_G^{\delta}(C/S)}&  \RPic \Bun_{T_G}^{d}(C/S)\ar[d]^{c_{T_G}^{d}(C/S)}\\
 \NS\Bun_G^{\delta}(\bbP^1) \ar[r]^(0.4){\iota_{\sharp}^{\NS,*}} & \NS\Bun_{T_G}^{d}(\bbP^1)=\Lambda^*(T_G)\\
}
\end{equation*}
where the bottom map is defined by (see \cite[Def. 5.2.5]{BH10})
$$
\begin{aligned}
\iota_{\sharp}^{\NS,*}: \NS\Bun_G^{\delta}(\bbP^1) & \to \NS\Bun_{T_G}^{d}(\bbP^1)=\Lambda^*(T_G)\\
 (l,b)  &\mapsto  l+b(d^{\ss},-)
 \end{aligned}
$$ 
The above commutative diagram, together with Theorem \ref{T:PicBunT-0}\eqref{T:PicBunT-02}, implies that 
$$
\Im(c_G^{\delta}(C/S))\subseteq (\iota_{\sharp}^{\NS,*})^{-1}(\Im(c_{T_G}^d(C/S)))=\{(l,b)\in \NS \Bun_G^{\delta}(\bbP^1) : l(\delta^{\ab})+b(d^{\ss}, d^{\ss}) \text{ is even}\},
$$
and we are done. 
\end{proof}

\begin{cor}\label{C:PicG=0}
Same assumptions of Theorem \ref{T:PicG=0}. 
There exists an exact sequence functorial in $S$ and in $G$
\begin{equation}\label{E:ex-Pic-g0}
0\to \Pic \Bun_{G_{\ab}}^{\delta^{\ab}}(C/S)\xrightarrow{\ab_\sharp^*} \Pic \Bun_G^{\delta}(C/S)\xrightarrow{p_G^{\delta}}
\left\{\begin{aligned} b\in \left(\Sym^2 \Lambda^*(T_{G^{\sc}})\right)^{\mathscr W_G}: \\ b^{\bbQ}(d^{\ss},-)_{|\Lambda(T_{\scr D(G)})}\text{ is integral }\end{aligned}\right\}
\end{equation}
where the  last homomorphism $p_G^{\delta}$ has image of index at most two and it is surjective  if either $\scr D(G)$ is simply connected or $C/S$ is Zariski locally trivial. 
\end{cor}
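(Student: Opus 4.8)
The plan is to bootstrap the statement from Theorem~\ref{T:PicG=0} together with the structure of $\NS\Bun_G^{\delta}(\bbP^1)$ recorded by Biswas--Hoffmann. First I would pass to relative Picard groups: since $\ab_\sharp^*$ is the identity on the subgroup $\Pic(S)$ and the homomorphism $p_G^{\delta}$ to be constructed kills $\Pic(S)$, the snake lemma applied to the two sequences $0\to\Pic(S)\to\Pic\to\RPic\to 0$ for $G$ and for $G^{\ab}$ (using Lemma~\ref{L:inj-ab*} and Fact~\ref{F:Pic-BunG}\eqref{F:Pic-BunG0}) reduces the claim to the analogous exact sequence with $\RPic$ in place of $\Pic$. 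Next I would record the exact sequence, functorial in $G$,
\begin{equation*}
0\to \Lambda^*(G^{\ab})\xrightarrow{\,j\,} \NS\Bun_G^{\delta}(\bbP^1)\xrightarrow{\,\mathrm{pr}\,} \left\{b\in\left(\Sym^2\Lambda^*(T_{G^{\sc}})\right)^{\scr W_G}\: :\: b^{\bbQ}(d^{\ss},-)_{|\Lambda(T_{\scr D(G)})}\text{ is integral}\right\}\to 0,
\end{equation*}
where $\mathrm{pr}(l,b)=b$ and $j$ is the functoriality map $\NS\Bun_{G^{\ab}}^{\delta^{\ab}}(\bbP^1)=\Lambda^*(G^{\ab})\to\NS\Bun_G^{\delta}(\bbP^1)$ attached to $\ab\colon G\to G^{\ab}$. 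Surjectivity of $\mathrm{pr}$ onto the displayed lattice and the identification $\ker\mathrm{pr}=j(\Lambda^*(G^{\ab}))$ both come from unwinding the definition of $\NS\Bun_G^{\delta}(\bbP^1)$ — a pair $(l,b)$ lies there exactly when $b^{\bbQ}(d^{\ss},-)$ is the semisimple component of an honest character of $T_G$, which forces $b^{\bbQ}(d^{\ss},-)\in\Lambda^*(T_{\scr D(G)})$ and leaves the abelian component free up to $\Lambda^*(G^{\ab})\hookrightarrow\Lambda^*(\scr Z(G))$ — or alternatively from \cite[\S 5.2]{BH10}. I then set $p_G^{\delta}:=\mathrm{pr}\circ c_G^{\delta}$ (precomposed with $\Pic\to\RPic$), where $c_G^{\delta}$ is the injective, $G$-functorial homomorphism of Theorem~\ref{T:PicG=0}.

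Exactness at $\RPic\Bun_G^{\delta}(C/S)$ is then a diagram chase. Functoriality of $c_{\bullet}^{\bullet}$ applied to $\ab$, together with the identification of $c_{G^{\ab}}^{\delta^{\ab}}$ with the weight homomorphism (Theorem~\ref{T:PicG=0}\eqref{T:PicG=01}), yields a commutative square $c_G^{\delta}\circ\ab_\sharp^* = j\circ c_{G^{\ab}}^{\delta^{\ab}}$, whence $\ab_\sharp^*\bigl(\RPic\Bun_{G^{\ab}}^{\delta^{\ab}}(C/S)\bigr)\subseteq (c_G^{\delta})^{-1}(j(\Lambda^*(G^{\ab})))=(c_G^{\delta})^{-1}(\ker\mathrm{pr})=\ker p_G^{\delta}$. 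For the reverse inclusion I would compare images: by Theorem~\ref{T:PicG=0}\eqref{T:PicG=02} and Theorem~\ref{T:PicBunT-0}\eqref{T:PicBunT-03}, when $C/S$ is not Zariski-locally trivial one has $j^{-1}(\Im c_G^{\delta})=\{\chi\in\Lambda^*(G^{\ab}):\chi(\delta^{\ab})\in 2\bbZ\}=\Im c_{G^{\ab}}^{\delta^{\ab}}$ (and $j^{-1}(\Im c_G^{\delta})=\Lambda^*(G^{\ab})=\Im c_{G^{\ab}}^{\delta^{\ab}}$ in the Zariski-locally-trivial case), so that $\ker p_G^{\delta}=(c_G^{\delta})^{-1}\bigl(j(\Im c_{G^{\ab}}^{\delta^{\ab}})\bigr)=\ab_\sharp^*\bigl(\RPic\Bun_{G^{\ab}}^{\delta^{\ab}}(C/S)\bigr)$ by injectivity of $c_{\bullet}^{\bullet}$ and of $\ab_\sharp^*$.

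For the image of $p_G^{\delta}$: since $\Im c_G^{\delta}$ has index at most $2$ in $\NS\Bun_G^{\delta}(\bbP^1)$ (Theorem~\ref{T:PicG=0}\eqref{T:PicG=02}) and $\mathrm{pr}$ is surjective, $\Im p_G^{\delta}=\mathrm{pr}(\Im c_G^{\delta})$ has index at most $2$ in the target lattice; it is the whole target when $C/S$ is Zariski-locally trivial, because then $c_G^{\delta}$ is onto $\NS\Bun_G^{\delta}(\bbP^1)$. When $\scr D(G)$ is simply connected, the target equals $\left(\Sym^2\Lambda^*(T_{G^{\sc}})\right)^{\scr W_G}$ by Lemma~\ref{L:coker-sym}\eqref{L:coker-sym3} (the integrality of $b^{\bbQ}(d^{\ss},-)$ on $\Lambda(T_{\scr D(G)})=\Lambda(T_{G^{\sc}})$ being automatic for an even $\scr W_G$-invariant form), and surjectivity of $p_G^{\delta}$ is the content of the genus-zero case of Theorem~\ref{T:PicBunGAss} once one identifies $p_G^{\delta}$ with the surjection there — which can be done because both are characterized by having kernel the image of $\ab_\sharp^*$ and by composing with the transgression map $\tau_G^{\delta}$ to give the restriction $\res_{\scr D}$ of bilinear forms to $\Lambda(T_{\scr D(G)})$.

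I expect the main obstacle to be making the $\NS$-level input precise: reproving, at the level of $\NS$-lattices over $\bbP^1$, the Biswas--Hoffmann abelianization exact sequence and verifying its compatibility with the weight homomorphism, with the transgression map, and with the parity twist of Theorem~\ref{T:PicBunT-0}\eqref{T:PicBunT-03} that appears when $C/S$ has no section; once these compatibilities are pinned down, everything else is bookkeeping with the lattices of Subsection~\ref{s:redgrp}.
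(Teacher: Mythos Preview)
Your proposal is correct and follows essentially the same route as the paper. Both arguments define $p_G^{\delta}$ as $\mathrm{pr}\circ c_G^{\delta}$, invoke the Biswas--Hoffmann exact sequence $0\to\Lambda^*(G^{\ab})\to\NS\Bun_G^{\delta}(\bbP^1)\to\{b:\ldots\}\to 0$ (the paper cites \cite[Prop.~5.2.11]{BH10}), use Theorem~\ref{T:PicG=0} to control $\Im(c_G^{\delta})$, and finish the simply-connected case with Theorem~\ref{T:PicBunGAss}. The paper packages the key step as the observation that the commutative square formed by $w_{G^{\ab}}^{\delta^{\ab}}$, $c_G^{\delta}$, $\ab_\sharp^*$, and $\ab_\sharp^{\NS,*}$ is Cartesian and then reads off the snake-lemma sequence $0\to\coker(w_{G^{\ab}}^{\delta^{\ab}})\to\coker(c_G^{\delta})\to\coker(p_G^{\delta})\to 0$; your image comparison $j^{-1}(\Im c_G^{\delta})=\Im c_{G^{\ab}}^{\delta^{\ab}}$ is exactly the verification that this square is Cartesian, so the two presentations are equivalent.
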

Note that if $\scr D(G)$ is simply connected then the last group of \eqref{E:ex-Pic-g0} is equal to $ \left(\Sym^2 \Lambda^*(T_{G^{\sc}})\right)^{\mathscr W_G}$ by Lemma \ref{L:coker-sym}\eqref{L:coker-sym3}.

\begin{proof}
The above Theorem \ref{T:PicG=0} implies that the following diagram 
\begin{equation*}
\xymatrix{ 
 \RPic \Bun_{G^{\ab}}^{\delta^{\ab}}(C/S) \ar@{^{(}->}[r]^{\ab_{\sharp}^*} \ar@{^{(}->}[d]^{\w_{G^{\ab}}^{\delta^{\ab}}(C/S)}&  \RPic \Bun_G^{\delta}(C/S)\ar@{^{(}->}[d]^{c_{G}^{\delta}(C/S)}\\
 \NS\Bun_{G^{\ab}}^{\delta^{\ab}}(\bbP^1)=\Lambda^*(G^{\ab}) \ar@{^{(}->}[r]^(0.6){\ab_{\sharp}^{\NS,*}} & \NS\Bun_{G}^{\delta}(\bbP^1)
}
\end{equation*}
is Cartesian, where the map $\ab_{\sharp}^{\NS,*}$ sends $\chi$ into $(\chi, 0)$. Therefore, we get that the cokernel of $\ab_{\sharp}^*$ injects into the cokernel of $\ab_{\sharp}^{\NS,*}$, which is equal (by \cite[Prop. 5.2.11]{BH10}) to 
$$ 
\coker(\ab_{\sharp}^{\NS,*})=\left\{ b\in \left(\Sym^2 \Lambda^*(T_{G^{\sc}})\right)^{\mathscr W_G}: b^{\bbQ}(d^{\ss},-)_{|\Lambda(T_{\scr D(G)})}\text{ is integral }\right\}.
$$
This proves the existence of the exact sequence \eqref{E:ex-Pic-g0} as well as of the following exact sequence
$$
0\to \coker \w_{G^{\ab}}^{\delta^{\ab}}(C/S)\to \coker c_{G}^{\delta}(C/S)\to \coker p_G^{\delta} \to 0.
$$
Since $\coker c_{G}^{\delta}(C/S)$ is a finite group of order at most two by Theorem \ref{T:PicG=0} and it is trivial if $C/S$ is Zariski locally trivial,  we deduce that the same is true for $\coker p_G^{\delta}$. Moreover, $\coker p_G^{\delta}$ is trivial if $\scr D(G)$ is simply connected by 
Theorem \ref{T:PicBunGAss}.
\end{proof}

\section{The rigidification of $ \Bun_{G}^{\delta}(C/S)$ by the center}\label{S:rigid}

The aim of this section is to study, for a reductive group $G$,  the $\scr Z(G)$-gerbe (see \eqref{E:rigid})
\begin{equation*}
\nu_G^{\delta}=\nu_G^{\delta}(C/S):\Bun_{G}^{\delta}(C/S)\to \Bun_{G}^{\delta}(C/S)\fatslash \scr Z(G):=\Bunr_{G}^{\delta}(C/S),
\end{equation*}
obtained  by rigidifying the stack $\Bun_{G}^{\delta}(C/S)$ by the center $\scr Z(G)$ of the reductive group $G$.


From the Leray spectral sequence 
$$
E_{p,q}^2=H^p(\Bunr_G^{\delta}(C/S), R^q(\nu_{G}^{\delta})_*(\Gm))\Rightarrow H^{p+q}(\Bun_G^{\delta}(C/S), \Gm), 
$$
and using that $(\nu_G^{\delta})_*(\Gm)=\Gm$ and that $R^1(\nu_{G}^{\delta})_*(\Gm)$ is the constant sheaf $\Lambda^*(\scr Z(G))=\Hom(\scr Z(G),\Gm)$, we get the exact sequence 
\begin{equation}\label{E:seqLeray}
\begin{aligned}
&  \Pic(\Bunr_G^{\delta}(C/S))\stackrel{(\nu_G^{\delta})^*}{\hookrightarrow} \Pic(\Bun_G^{\delta}(C/S)) \xrightarrow{\w_G^{\delta}} \Lambda^*(\scr Z(G)):=\Hom(\scr Z(G),\Gm) \\
&    \xrightarrow{\obs_G^{\delta}} H^2(\Bunr_G^{\delta}(C/S), \Gm)\xrightarrow{(\nu_G^{\delta})^*}  H^2(\Bun_G^{\delta}(C/S), \Gm),
\end{aligned}
\end{equation}
and an analogue one in which the Picard groups $\Pic$ are replaced by the relative Picard groups $\RPic$.

Recall (see \cite[\S 5]{FV2}) the following geometric  descriptions of the \emph{weight homomorphism} $\w_G^{\delta}$ and the \emph{obstruction homomorphism} $\obs_G^{\delta}$:
\begin{itemize}
\item given a line bundle $\cL$ on $\Bun_G^{\delta}(C/S)$, the character $\w_G^{\delta}(\cL)\in \Lambda^*(\scr Z(G))$ is such that, for any  $\cE:=(E\to C_V)\in \Bun_G^{\delta}(C/S)(V)$, we have the factorization
$$
\w_G^{\delta}(V):\scr Z(G)(V) \hookrightarrow \Aut_{\Bun_G^{\delta}(C/S)(V)}(\cE)\xrightarrow{\Aut(\cL_{V})} \Aut_{\cO_V}(\cL_V(\cE))=\Gm(V),
$$
where the first homomorphism is given by the canonical action of $\scr Z(G)$ on every $G$-gerbe, and the second homomorphism is induced by the functor of groupoids 
$$\cL_V:\Bun_G^{\delta}(C/S)(V)\to \{\text{Line bundles on } V\}$$ 
determined by $\cL$. 
\item given any character $\lambda\in \Hom(\scr Z(G),\Gm)$, the element $\obs_G^{\delta}(\lambda)$ is the class in $H^2(\Bunr_G^{\delta}(C/S), \Gm)$ of the $\Gm$-gerbe $\lambda_*(\nu_G^{\delta})$ obtained by push-forwarding the $\scr Z(G)$-gerbe $\nu_G^{\delta}$ along $\lambda$. 
\end{itemize}

Recall also (see \cite[\S 2.1]{FV2})  that the character group of the center $\scr Z(G)$ is equal to
$$\Lambda^*(\scr Z(G))=\frac{\Lambda^*(T_G)}{\Lambda^*(T_{G^{\ad}})}$$
and it sits into the following exact sequence induced by the vertical exact sequence of \eqref{E:cross}:
\begin{equation}\label{E:LambdaZG}
0\to \Lambda^*(G^{\ab})\xrightarrow{\ov{\Lambda_{\ab}^*}} \Lambda^*(\scr Z(G))=\frac{\Lambda^*(T_G)}{\Lambda^*(T_{G^{\ad}})} \xrightarrow{\ov{\Lambda_{\scr D}^*}} \Lambda^*(\scr Z(\scr D(G)))=\frac{\Lambda^*(T_{\scr D(G)})}{\Lambda^*(T_{G^{\ad}})} \to 0.
\end{equation}

Our goal is to determine the (relative) Picard group of the rigidification $\Bunr_{G}^{\delta}(C/S)$ and the group 
\begin{equation}\label{E:obs-wei}
 \coker(\w_G^{\delta}) \cong \Im(\obs_G^{\delta})=\ker (\nu_G^{\delta})^*
\end{equation}
which is an  obstruction to the triviality of the $\scr Z(G)$-gerbe $\nu_G^{\delta}$. 

With this aim, we need to recall two crucial homomorphisms from \cite{FV2}. 

\begin{defin}\label{D:evGsc}(see \cite[Definition/Lemma 2.8, Def. 5.3]{FV2})
Let $G$ be a reductive group with fixed maximal torus $T_G$ and Weyl group $\scr W_G$, and fix $\delta\in \pi_1(G)$. 
\begin{enumerate}
\item There is a well-defined homomorphism (called \emph{evaluation homomorphism})
\begin{equation}\label{E:evGsc}
\begin{aligned}
\ev_{\scr D(G)}^{\delta}:\Bil^{s,\ev}(\Lambda(T_{\scr D(G)})\vert \Lambda(T_{G^{\ss}}))^{\scr W_G} & \longrightarrow \frac{\Lambda^*(T_{\scr D(G)})}{\Lambda^*(T_{G^{\ad}})}, \\
b &\mapsto b(\delta^{\ss}\otimes -):=[b^{\bbQ}(d^{\ss}\otimes -)], 
\end{aligned}
\end{equation}
where $d^{\ss}\in \Lambda(T_{G^{\ss}})$ is any lift of $\delta^{\ss}:=\pi_1(\ss)(\delta)\in \pi_1(G^{\ss})=\frac{\Lambda(T_{G^{\ss}})}{\Lambda(T_{G^{\sc}})}$ and $d^{\bbQ}$ is the rational extension of $b$ to $\Lambda(T_{G^{\ss}})$. 
\item The \emph{Neron-Severi}  group of $\Bunr_G^{\delta}$ is the subgroup 
$$\NS(\Bunr_{G}^{\delta})\subset \Bil^{s,\scr D-\ev}(\Lambda(T_G))^{\scr W_G}$$
 consisting of those $b\in \Bil^{s,\scr D-\ev}(\Lambda(T_G))^{\scr W_G}$ such that 
\begin{equation}\label{E:NS-rig}
0=b(\delta\otimes -)_{|\Lambda(T_{\scr D(G)})}:=\left[b(d\otimes -)_{|\Lambda(T_{\scr D(G)})} \right] \in  \frac{\Lambda^*(T_{\scr D(G)})}{\Lambda^*(T_{G^{\ad}})},
\end{equation}
where $d\in \Lambda(T_G)$ is any lift of $\delta$.
\end{enumerate}
\end{defin}

Note that there is an injective homomorphism 
$$
\begin{aligned}
\nu^{\delta, \NS}: \NS(\Bunr_{G}^{\delta}) & \hookrightarrow \NS(\Bun_G^{\delta})\\
b & \mapsto ([0],b).
\end{aligned}
$$ 
For a more detailed description of  $\NS(\Bunr_G^{\delta})$, see \cite[Prop. 5.4]{FV2}.

\begin{ese}\label{R:coker-ev}
If $\scr D(G)$ is simply connected, i.e. $\scr D(G)=G^{\sc}$, then the cokernel of $\ev_{G^{\sc}}^{\delta}$ can be computed explicitly as it follows. 

\begin{enumerate}
\item If $G^{\sc}=G_1^{\sc}\times \cdots \times G_r^{\sc}$ is the decomposition of $G^{\sc}$ into simply connected and almost simple algebraic groups and $\delta=(\delta_1,\ldots, \delta_r)\in \pi_1(G^{\ad})=\pi_1(G_1^{\ad})\times \cdots \times \pi_1(G_r^{\ad})$, then 
$$
\coker \ev_{G^{\sc}}^{\delta}\cong \coker \ev_{G_1^{\sc}}^{\delta_1}\times \cdots \times \coker \ev_{G_r^{\sc}}^{\delta_r}.
$$
 \item  If $G^{\sc}$ is simply connected and almost simple, then the cokernel of $\ev_{G^{\sc}}^{\delta}$ is equal to (see \cite[\S 7]{FV2}):  
\begin{enumerate}
\item If $G^{\sc}=\SL_n$ (and hence $G^{\ad}=\PSL_n$) then, for any $\delta\in \pi_1(\PSL_n)=\bbZ/n\bbZ$, we have that 
$$
\coker \ev_{\SL_n}^{\delta}=\frac{\bbZ}{\gcd(n,\delta)\bbZ}.  
$$
\item If $G^{\sc}=\Spin_{2n+1}$ (and hence $G^{\ad}=\SO_{2n+1}$) then, for any $\delta\in \pi_1(\SO_{2n+1})=\bbZ/2\bbZ$, we have that 
$$
\coker \ev_{\Spin_{2n+1}}^{\delta}=\bbZ/2\bbZ.  
$$
\item If $G^{\sc}=\Sp_{2n}$ (and hence $G^{\ad}=\PSp_{2n}$) then, for any $\delta\in \pi_1(\PSp_{2n})=\bbZ/2\bbZ$, we have that 
$$
\coker \ev_{\Sp_{2n}}^{\delta}=
\begin{cases}
\bbZ/2\bbZ & \text{ if either }\delta= 0 \text{ or } n \text{ is even}, \\
0 & \text{ otherwise.}
\end{cases}  
$$
\item If $G^{\sc}=\Spin_{2n}$ (and hence $G^{\ad}=\PSO_{2n}$) then, for any 
$$\delta\in \pi_1(\PSO_{2n})=
\begin{cases}
\bbZ/4\bbZ & \text{ if } n \: \text{ is odd,}\\
\bbZ/2\bbZ\times \bbZ/2\bbZ & \text{ if }n \: \text{ is even,} 
\end{cases}
$$ 
we have that 
$$
\coker \ev_{\Spin_{2n}}^{\delta}=
\begin{cases}
\bbZ/4\bbZ & \text{ if } \delta=0 \text{ and }n \: \text{ is odd,}\\
\bbZ/2\bbZ\times \bbZ/2\bbZ & \text{ if } \delta=0  \:\text{ and } n  \text{ is even,} \\
\bbZ/2\bbZ & \text{ if } \ord(\delta)=2,\\
0 & \text{ if } \ord(\delta)=4.
\end{cases}  
$$
\item If $G^{\sc}=\bbE_{6}^{\sc}$ then, for any $\delta\in \pi_1(\bbE_6^{\ad})=\bbZ/3\bbZ$, we have that 
$$
\coker \ev_{\bbE_6^{\sc}}^{\delta}=
\begin{cases}
\bbZ/3\bbZ & \text{ if } \delta=0, \\
0 & \text{ if } \delta\neq 0.
\end{cases}  
$$
\item If $G^{\sc}=\bbE_{7}^{\sc}$ then, for any $\delta\in \pi_1(\bbE_7^{\ad})=\bbZ/2\bbZ$, we have that 
$$
\coker \ev_{\bbE_7^{\sc}}^{\delta}=
\begin{cases}
\bbZ/2\bbZ & \text{ if } \delta= 0, \\
0 & \text{ if } \delta\neq 0.
\end{cases}  
$$
\item If $G^{\sc}$ is of type $\bbE_8$ or $\bbF_4$ or $\bbG_2$ then 
$$
\coker \ev_{G^{\sc}}^{\delta}=0.
$$
\end{enumerate}
\end{enumerate}
\end{ese}

We first describe the (relative) Picard group of the rigidification $\Bunr_{G}^{\delta}(C/S)$ for families of positive genus, under certain assumptions.

\begin{teo}\label{T:PicRig>0}
Let $C\to S$ be a family of curves of genus $g>0$ on a regular and integral quotient stack with geometric generic fiber $C_{\ov \eta}$.
Assume that  the two following two conditions hold true:
\begin{enumerate}[(i)]
\item  $\End(J_{C_{\ov \eta}}) =\bbZ$ and $\RPic(C/S)=\Pic_{C/S}(S)$;
\item either $\RPic^0(C/S)$ is torsion-free or $\scr D(G)$ is simply connected.
\end{enumerate}
Then, for any fixed $\delta\in \pi_1(G)$, we have the following exact sequence, functorial in $S$ and in $G$:
\begin{equation}\label{E:Pic-rig1}
0 \to \Lambda^*(G^{\ab})\otimes \RPic^0(C/S)  \xrightarrow{\ov{j_G^{\delta}}} \RPic(\Bunr_G^{\delta}(C/S))\xrightarrow{\ov{\gamma_G^{\delta}}} \NS(\Bunr_G^{\delta}),
\end{equation}
where $\ov{j_G^{\delta}}$ and $\ov{\gamma_G^{\delta}}$ are uniquely determined by 
$$(\nu_G^{\delta})^*\circ \ov{j_G^{\delta}}=j_G^{\delta}\quad \text{ and } \quad \nu_G^{\delta,\NS} \circ \ov{\gamma_G^{\delta}}= (\omega_G^{\delta}\oplus \gamma_G^{\delta})\circ (\nu_G^{\delta})^*,$$
see Theorem \ref{T:altpres}. 

Moreover, the image of $\ov{\gamma_G^{\delta}}$ is equal to 
\begin{equation}\label{E:Pic-rig2}
 \Im(\ov{\gamma_G^{\delta}})=
  \left\{b\in \NS(\Bunr_G^{\delta})\: : \: 
 \begin{aligned}
 & b(\delta\otimes x)+(g-1)b(x\otimes x)   \text{ is a multiple of } \delta(C/S) \\
&  \text{ for any } x\in \Lambda(T_G)
 \end{aligned}
 \right\}.
  \end{equation}
\end{teo}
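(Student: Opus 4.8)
The plan is to deduce Theorem~\ref{T:PicRig>0} directly from the diagram~\eqref{E:diagpres} of Theorem~\ref{T:altpres} together with the Leray exact sequence~\eqref{E:seqLeray} for the gerbe $\nu_G^{\delta}$. Under our two hypotheses, Theorem~\ref{T:altpres} applies, so we have the description of $\RPic \Bun_G^{\delta}(C/S)$ and of the image of $\omega_G^{\delta}\oplus \gamma_G^{\delta}$; and Theorems~\ref{T:PicBunGAss} and~\ref{T:PicBunGtf} guarantee that~\eqref{E:push-diag} is a push-out diagram. First I would observe that the relative version of~\eqref{E:seqLeray} identifies $\RPic(\Bunr_G^{\delta}(C/S))$ with the kernel of the weight homomorphism $\w_G^{\delta}$ acting on $\RPic \Bun_G^{\delta}(C/S)$. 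Combining this with the bottom row of~\eqref{E:diagpres}, the subgroup $\ker(\w_G^{\delta})$ maps, via $\gamma_G^{\delta}$, onto those $b\in \Bil^{s,\scr D-\ev}(\Lambda(T_G))^{\scr W_G}$ whose preimage class has zero weight; by the definition of $\NS(\Bunr_G^{\delta})$ (Definition~\ref{D:evGsc}), these are exactly the $b$ with $b(\delta\otimes -)_{|\Lambda(T_{\scr D(G)})}=0$, i.e.\ $\NS(\Bunr_G^{\delta})$ sits inside $\NS(\Bun_G^{\delta})$ via $\nu_G^{\delta,\NS}$.

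**Key steps.** (1) Set up the relative Leray sequence: $0\to \RPic(\Bunr_G^{\delta}(C/S))\xrightarrow{(\nu_G^{\delta})^*}\RPic\Bun_G^{\delta}(C/S)\xrightarrow{\w_G^{\delta}}\Lambda^*(\scr Z(G))$. (2) Restrict the bottom row of~\eqref{E:diagpres} to $\ker(\w_G^{\delta})$: since $\omega_G^{\delta}$ (the map into $\Lambda^*(T_G)/\Lambda^*(T_{G^{\ad}})=\Lambda^*(\scr Z(G))$) is precisely the factorization of $\w_G^{\delta}$ through $\NS(\Bun_G^{\delta})$, an element of $\RPic\Bun_G^{\delta}(C/S)$ lies in $\ker \w_G^{\delta}$ iff its image under $\omega_G^{\delta}\oplus\gamma_G^{\delta}$ has first coordinate $[0]$, i.e.\ lies in $\nu_G^{\delta,\NS}(\NS(\Bunr_G^{\delta}))$. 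The kernel of $i_G^{\delta}\colon \Lambda^*(G^{\ab})\otimes\RPic(C/S)\to \RPic\Bun_G^{\delta}(C/S)$ composed with $\w_G^{\delta}$ is computed from $\w_G^{\delta}(\langle \cL_{\chi},M\rangle_{\wh\pi})=\deg_{\pi}(M)\chi$ (the $G$-analogue of~\eqref{E:wgt-Del}), giving kernel $\Lambda^*(G^{\ab})\otimes\RPic^0(C/S)$. This yields the exact sequence~\eqref{E:Pic-rig1} with $\ov{j_G^{\delta}}$ and $\ov{\gamma_G^{\delta}}$ as stated, uniqueness following from injectivity of $(\nu_G^{\delta})^*$ and of $\nu_G^{\delta,\NS}$. (3) For the image statement~\eqref{E:Pic-rig2}, intersect the already-known image $\Im(\omega_G^{\delta}\oplus\gamma_G^{\delta})$ from Theorem~\ref{T:altpres} with the subgroup $\{([0],b)\}$: an element $([0],b)$ lies in the image iff $\bigl[\,0-b(\delta\otimes x)\,\bigr]+(g-1)b(x\otimes x)$ is divisible by $\delta(C/S)$ for all $x\in\Lambda(T_G)$, and since $b\in\NS(\Bunr_G^{\delta})$ already forces $b(\delta\otimes x)_{|\Lambda(T_{\scr D(G)})}=0$, the bracket is an honest integer $b(\delta\otimes x)+(g-1)b(x\otimes x)$ (up to sign conventions), reproducing~\eqref{E:Pic-rig2}. (4) Functoriality in $S$ and $G$ is inherited from that of~\eqref{E:diagpres} and of the Leray sequence.

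**Main obstacle.** The delicate point is the bookkeeping in step~(2): one must check carefully that $\omega_G^{\delta}$ (the composite $\RPic\Bun_G^{\delta}(C/S)\to\NS(\Bun_G^{\delta})\to\Lambda^*(\scr Z(G))$) literally coincides with the weight homomorphism $\w_G^{\delta}$ of~\eqref{E:seqLeray}, so that $\ker\w_G^{\delta}$ is computed as a fibre product against $\NS(\Bunr_G^{\delta})\hookrightarrow\NS(\Bun_G^{\delta})$. This requires tracing through the construction of $\omega_G^{\delta}$ in Theorem~\ref{T:altpres} (via the push-out~\eqref{E:push-diag}) against the geometric description of $\w_G^{\delta}$ recalled after~\eqref{E:seqLeray}, compatibly with the restriction $\res_G^{\NS}$. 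A secondary subtlety is exactness on the left of~\eqref{E:Pic-rig1}, i.e.\ that $\ov{j_G^{\delta}}$ really is injective: this follows because $(\nu_G^{\delta})^*\circ\ov{j_G^{\delta}}=j_G^{\delta}$ is injective (being the restriction of $i_G^{\delta}$, which is injective by the top row of~\eqref{E:diagpres}). Once these identifications are nailed down, the rest is a diagram chase of the kind already carried out in~\cite[\S5]{FV2} for the universal family, and the argument transports to the present generality verbatim.
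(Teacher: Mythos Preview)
Your proposal is correct and follows essentially the same approach as the paper, which simply says ``This is deduced from Theorem~\ref{T:altpres}, arguing as in the proofs of \cite[Thm.~5.5]{FV2}.'' You have spelled out precisely what that deduction is: identify $\RPic(\Bunr_G^{\delta}(C/S))$ with $\ker(\w_G^{\delta})$ via the relative Leray sequence, then restrict the top row of~\eqref{E:diagpres} to this kernel, obtaining~\eqref{E:Pic-rig1} and reading off the image~\eqref{E:Pic-rig2} by intersecting $\Im(\omega_G^{\delta}\oplus\gamma_G^{\delta})$ with $\nu_G^{\delta,\NS}(\NS(\Bunr_G^{\delta}))$. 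Your flagged ``main obstacle'' (that $\omega_G^{\delta}$ coincides with $\w_G^{\delta}$) is in fact no obstacle at all: the proof of Theorem~\ref{T:altpres} in the paper states explicitly that $\omega_G^{\delta}$ is \emph{defined} as the map induced by the weight homomorphism $\w_G^{\delta}$ on the relative Picard group, so no further verification is needed.
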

This Theorem was proved for the universal family $\cC_{g,n}\to \cM_{g,n}$ for $g\geq 1$  in \cite[Thm. 5.5]{FV2}.
\begin{proof}
This is deduced from Theorem \ref{T:altpres}, arguing as in the proofs of \cite[Thm. 5.5]{FV2}.
\end{proof}

We now describe the cokernel of the weight homomorphism. 

\begin{teo}\label{T:cokerwt}
Let $C\to S$ be a family of curves of genus $g>0$ on a regular and integral quotient stack with geometric generic fiber $C_{\ov \eta}$.  Assume that the following two conditions hold true:
\begin{enumerate}[(i)]
\item  $\End(J_{C_{\ov \eta}}) =\bbZ$ and $\RPic(C/S)=\Pic_{C/S}(S)$;
\item either $\RPic^0(C/S)$ is torsion-free or $\scr D(G)$ is simply connected.
\end{enumerate}
Then, for any fixed $\delta\in \pi_1(G)$, we have the following exact sequence,
There exists an exact sequence 
 \begin{equation}\label{E:coker-om2}
 0\to \coker(\ov{\gamma_G^{\delta}})\xrightarrow{\partial_G^{\delta}} \Hom\left(\Lambda(G^{\ab}), \frac{\bbZ}{\delta(C/S)\bbZ}\right)\xrightarrow{\wt{\Lambda_{\ab}^*}} \coker(\w_G^{\delta})\xrightarrow{ \wt{\Lambda_{\scr D}^*}} \coker(\ev_{\scr D(G)}^{\delta})\to 0,
  \end{equation}
where 
$\wt{\Lambda_{\ab}^*}$ and $\wt{\Lambda_{\scr D}^*}$ are the homomorphisms induced by, respectively, $\ov{\Lambda_{\ab}^*}$ and $\ov{\Lambda_{\scr D}^*}$ of \eqref{E:LambdaZG}, and $\partial_G^{\delta}$ is defined as it follows
\begin{equation}\label{E:partial}
\begin{aligned}
\partial_G^{\delta}: \coker(\ov{\gamma_G^{\delta}}) & \longrightarrow \Hom\left(\Lambda(G^{\ab}), \frac{\bbZ}{\delta(C/S)\bbZ}\right)\\
[b] & \mapsto \left\{x\mapsto \left[b(\delta\otimes \wt x)+(1-g) b(\wt x\otimes \wt x)\right]\right\},  
\end{aligned}
\end{equation}
where $b\in \NS(\bgr{G}^{\delta})\subset \Bil^{s,\scr D-\ev}(\Lambda(T_G))^{\scr W_G}$, and $\wt x\in \Lambda(T_G)$ is any lift of $x\in \Lambda(G^{\ab})$. 
\end{teo}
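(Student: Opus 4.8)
The plan is to splice together the presentations of $\RPic \Bun_G^{\delta}(C/S)$ and $\RPic \Bunr_G^{\delta}(C/S)$ from Theorems \ref{T:altpres} and \ref{T:PicRig>0} with the exact sequence \eqref{E:seqLeray} (in its relative form), and to extract \eqref{E:coker-om2} by a chain of diagram chases; hypotheses (i)--(ii) are precisely what is needed to invoke those two theorems. Throughout, set $n:=\delta(C/S)$, so $n\mid 2g-2$ by Remark \ref{R:deltaCS}\eqref{R:deltaCS1}; write $Z:=\Lambda^*(\scr Z(G))=\Lambda^*(T_G)/\Lambda^*(T_{G^{\ad}})$, let $p_1\colon \NS(\Bun_G^{\delta})\to Z$ be the first projection (so that $\w_G^{\delta}$ factors as $p_1\circ(\omega_G^{\delta}\oplus\gamma_G^{\delta})$ on the relative Picard group, whence $\coker(\w_G^{\delta})=Z/p_1(M)$ with $M:=\Im(\omega_G^{\delta}\oplus\gamma_G^{\delta})$), and put $M_0:=\nu_G^{\delta,\NS}(\Im(\ov{\gamma_G^{\delta}}))\subseteq \NS(\Bun_G^{\delta})$.

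First I would identify $\coker(\omega_G^{\delta}\oplus\gamma_G^{\delta})$. Since $\gamma_G^{\delta}=\res_G^{\NS}\circ(\omega_G^{\delta}\oplus\gamma_G^{\delta})$ is surjective (bottom row of \eqref{E:diagpres}), the restriction $\res_G^{\NS}|_M$ is surjective onto $\Bil^{s,\scr D-\ev}(\Lambda(T_G))^{\scr W_G}$; moreover $\res_G^{\NS}$ is the second projection, so its kernel is $\{([\chi],0)\}\cong \ker(\ov{\Lambda_{\scr D}^*})=\Lambda^*(G^{\ab})$ by \eqref{E:LambdaZG}. Hence $\NS(\Bun_G^{\delta})/M\cong \Lambda^*(G^{\ab})/(M\cap\Lambda^*(G^{\ab}))$. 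Evaluating the description of $M$ in Theorem \ref{T:altpres} on the slice $b=0$ (where the formula is unambiguous) gives $M\cap\Lambda^*(G^{\ab})=n\,\Lambda^*(G^{\ab})$, so
\begin{equation*}
\coker(\omega_G^{\delta}\oplus\gamma_G^{\delta})=\NS(\Bun_G^{\delta})/M\ \cong\ \Lambda^*(G^{\ab})/n\Lambda^*(G^{\ab})\ =\ \Hom\big(\Lambda(G^{\ab}),\tfrac{\bbZ}{n\bbZ}\big),
\end{equation*}
the isomorphism being induced by $\ov{\Lambda_{\ab}^*}$ together with $p_1$.

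Next I would compare $\Bunr_G^{\delta}$ and $\Bun_G^{\delta}$ via the relations $(\nu_G^{\delta})^*\circ\ov{j_G^{\delta}}=j_G^{\delta}$ and $\nu_G^{\delta,\NS}\circ\ov{\gamma_G^{\delta}}=(\omega_G^{\delta}\oplus\gamma_G^{\delta})\circ(\nu_G^{\delta})^*$, which give a map of short exact sequences from $0\to\Lambda^*(G^{\ab})\otimes\RPic^0(C/S)\to\RPic\Bunr_G^{\delta}(C/S)\to\Im(\ov{\gamma_G^{\delta}})\to0$ to the first row of \eqref{E:diagpres}, with vertical maps the identity, the injection $(\nu_G^{\delta})^*$, and $\nu_G^{\delta,\NS}$. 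A point to verify is that $\nu_G^{\delta,\NS}$ sends $\Im(\ov{\gamma_G^{\delta}})$ into $M$ and in fact $M_0=M\cap\ker(p_1)$: this reconciles the divisibility conditions of Theorems \ref{T:altpres} and \ref{T:PicRig>0} up to sign, using $2\,b(\delta\otimes x)\equiv0\pmod n$ (a consequence of $n\mid 2g-2$). The snake lemma then yields $\coker((\nu_G^{\delta})^*)\cong M/M_0$; since $\coker((\nu_G^{\delta})^*)\cong\Im(\w_G^{\delta})$ by \eqref{E:seqLeray} and $M/M_0=M/(M\cap\ker p_1)\cong p_1(M)$, this confirms $\Im(\w_G^{\delta})=p_1(M)$. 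Applying the snake lemma again to the inclusions $\Im(\ov{\gamma_G^{\delta}})\hookrightarrow M$, $\NS(\Bunr_G^{\delta})\hookrightarrow\NS(\Bun_G^{\delta})$ (noting $\nu_G^{\delta,\NS}(\NS(\Bunr_G^{\delta}))=\ker(p_1|_{\NS(\Bun_G^{\delta})})$) I would obtain
\begin{equation*}
0\to \coker(\ov{\gamma_G^{\delta}})\to \coker(\omega_G^{\delta}\oplus\gamma_G^{\delta})\xrightarrow{\ \bar p_1\ } p_1(\NS(\Bun_G^{\delta}))\big/p_1(M)\to 0 .
\end{equation*}

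Finally I would handle the tail. By the description of $\NS(\Bun_G^{\delta})$ (see \cite[Prop. 3.11]{FV2} and Definition \ref{D:NS}), using that $\res_{\scr D}$ is surjective and that a $\scr W_G$-invariant bilinear form on $\Lambda(T_G)$ is block diagonal for $\Lambda(T_G)_{\bbQ}=\Lambda(T_G)^{\ab}_{\bbQ}\oplus\Lambda(T_G)^{\ss}_{\bbQ}$ (so that $b(d\otimes x)=b^{\bbQ}(d^{\ss}\otimes x)$ for $x\in\Lambda(T_{\scr D(G)})$, matching the two lifts $d\in\Lambda(T_G)$, $d^{\ss}\in\Lambda(T_{G^{\ss}})$ of $\delta$), the defining relation \eqref{E:NS-BunG} reads $\ov{\Lambda_{\scr D}^*}([\chi])=\ev_{\scr D(G)}^{\delta}(\res_{\scr D}b)$; hence $p_1(\NS(\Bun_G^{\delta}))=(\ov{\Lambda_{\scr D}^*})^{-1}(\Im\ev_{\scr D(G)}^{\delta})$, and since $\ov{\Lambda_{\scr D}^*}$ is surjective,
\begin{equation*}
0\to p_1(\NS(\Bun_G^{\delta}))\big/p_1(M)\to \coker(\w_G^{\delta})=Z/p_1(M)\xrightarrow{\ \wt{\Lambda_{\scr D}^*}\ }\coker(\ev_{\scr D(G)}^{\delta})\to0 .
\end{equation*}
Splicing this with the sequence of the previous step and transporting along the isomorphism $\coker(\omega_G^{\delta}\oplus\gamma_G^{\delta})\cong\Hom(\Lambda(G^{\ab}),\bbZ/n\bbZ)$ produces the four-term exact sequence \eqref{E:coker-om2}: the composite $\Hom(\Lambda(G^{\ab}),\bbZ/n\bbZ)\to\coker(\w_G^{\delta})$ equals $\wt{\Lambda_{\ab}^*}$ because $\bar p_1$ is induced by $\ov{\Lambda_{\ab}^*}$, and $\partial_G^{\delta}$ is the injection $\coker(\ov{\gamma_G^{\delta}})\hookrightarrow\coker(\omega_G^{\delta}\oplus\gamma_G^{\delta})\cong\Hom(\Lambda(G^{\ab}),\bbZ/n\bbZ)$, which upon unwinding the identifications of the first two steps gives the formula \eqref{E:partial} (the replacement of $g-1$ by $1-g$ and the disappearance of the sign in front of $b(\delta\otimes\wt x)$ coming once more from $n\mid 2g-2$). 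Functoriality in $S$ and $G$ follows from that of Theorems \ref{T:altpres} and \ref{T:PicRig>0}, of \eqref{E:seqLeray}, and of \eqref{E:LambdaZG}. The hard part will be the two identifications $\Im(\w_G^{\delta})=p_1(M)$ and $M_0=M\cap\ker(p_1)$, which require carefully matching the two lifts of $\delta$ and the congruences modulo $n$ forced by $n\mid 2g-2$; the rest is formal.
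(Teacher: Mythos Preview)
Your proposal is correct and follows essentially the same route as the paper, which simply writes ``This is deduced from Theorem \ref{T:altpres}, arguing as in the proofs of \cite[Thm.~5.7]{FV2}''; you have reconstructed in detail the diagram chase that the paper defers to \cite{FV2}. In particular, your resolution of the sign discrepancy between the image descriptions in Theorems \ref{T:altpres} and \ref{T:PicRig>0} is right: from $b(\delta\otimes x)+(g-1)b(x\otimes x)\equiv 0\pmod n$ one gets $2b(\delta\otimes x)\equiv -(2g-2)b(x\otimes x)\equiv 0\pmod n$, so the two conditions on $b$ are equivalent and $M_0=M\cap\ker(p_1)$ holds as you claim.
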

This Theorem was proved for the universal family $\cC_{g,n}\to \cM_{g,n}$ for $g\geq 1$  in \cite[Thm. 5.7]{FV2}
 and for a  curve $C$ over $k=\ov k$ of genus $g(C)\geq 3$  in \cite[Thm. 6.8]{BH12} (and in both results the general case of an arbitrary reductive group is also treated). 
\begin{proof}
This is deduced from Theorem \ref{T:altpres}, arguing as in the proofs of \cite[Thm. 5.7]{FV2}.
\end{proof}

\begin{cor} \label{C:PicRig>0}
Same assumptions as in Theorem \ref{T:cokerwt}. 
\begin{enumerate}
\item \label{C:PicRig>0A} If $\delta(C/S)=1$ then there is an isomorphism 
$$\wt{\Lambda_{\scr D}^*}: \coker(\w_G^{\delta})\xrightarrow{ \cong} \coker(\ev_{\scr D(G)}^{\delta}).$$
\item \label{C:PicRig>0B} If $G=T$ is a torus and $d\in \Lambda(T)$, then  there is an exact sequence 
\begin{equation}\label{E:coker-om3}
 0\to \coker(\ov{\gamma_T^{d}})\xrightarrow{\partial_T^{d}} \Hom\left(\Lambda(T), \frac{\bbZ}{\delta(C/S)\bbZ}\right)\xrightarrow{} \coker(\w_T^{d})\to 0,
  \end{equation}
\end{enumerate}
Moreover, we have that 
 $$
\begin{aligned}
& \coker(\ov{\gamma_T^d})\cong \frac{\bbZ}{\frac{\delta(C/S)}{\gcd(\delta(C/S),\div(d)+1-g)}\bbZ}\oplus \left[ \frac{\bbZ}{\frac{\delta(C/S)}{\gcd(\delta(C/S),g-1,\div(d))}\bbZ}\right]^{\oplus(\dim(T)-1)}  \\
&\coker(\omega_T^d)\cong \frac{\bbZ}{\gcd(\delta(C/S),\div(d)+1-g)\bbZ}\oplus \left[\frac{\bbZ}{\gcd(\delta(C/S),g-1,\div(d))\bbZ}\right]^{\oplus(\dim(T)-1)}, \\
\end{aligned}
$$
where $\div(d)$ is the divisibility of $d$ in the lattice $\Lambda(T)$, with the convention that $\coker(\ov{\gamma_T^d})=\{0\}$ if $\delta(C/S)=0$ (when the above expression for $\coker(\ov{\gamma_T^d})$ is not well-defined).
\end{cor}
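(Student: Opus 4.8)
The plan is to obtain parts \eqref{C:PicRig>0A} and \eqref{C:PicRig>0B} as immediate degenerations of the four-term exact sequence \eqref{E:coker-om2} of Theorem \ref{T:cokerwt}, and then to extract the explicit formulas from the image descriptions already available in Theorems \ref{T:PicRig>0} and \ref{T:PicBunT>0} by a coordinate computation.

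First I would prove \eqref{C:PicRig>0A}: if $\delta(C/S)=1$ then $\bbZ/\delta(C/S)\bbZ=0$, so the middle group $\Hom(\Lambda(G^{\ab}),\bbZ/\delta(C/S)\bbZ)$ in \eqref{E:coker-om2} vanishes; exactness of \eqref{E:coker-om2} then forces $\coker(\ov{\gamma_G^{\delta}})=0$ and makes $\wt{\Lambda_{\scr D}^*}\colon \coker(\w_G^{\delta})\to\coker(\ev_{\scr D(G)}^{\delta})$ an isomorphism. For \eqref{C:PicRig>0B}, take $G=T$ a torus: then $\scr D(T)$ is trivial, so the last group $\coker(\ev_{\scr D(T)}^{\delta})$ in \eqref{E:coker-om2} is zero and $\Lambda(G^{\ab})=\Lambda(T)$, and \eqref{E:coker-om2} collapses to the short exact sequence \eqref{E:coker-om3}. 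Note that hypothesis (ii) of Theorem \ref{T:cokerwt} is automatic here since $\scr D(T)$ is simply connected, so only (i) is really used.

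For the ``Moreover'' part, I would first record that for $G=T$ a torus the lattice $\NS(\Bunr_T^{d})$ of Definition \ref{D:evGsc} equals $\Bil^s(\Lambda(T))$, because both the $\scr W_T$-invariance and the $\scr D$-evenness conditions become vacuous. Hence, by Theorem \ref{T:PicRig>0} applied with $G=T$, one has $\coker(\ov{\gamma_T^{d}})=\Bil^s(\Lambda(T))/\{b\colon \delta(C/S)\mid b(d,x)+(g-1)b(x,x)\ \text{for all }x\in\Lambda(T)\}$. I would then choose a basis $e_1,\dots,e_n$ of $\Lambda(T)$, $n=\dim T$, with $d=\div(d)\,e_1$, write $b$ in the coordinates $(b_{ij})_{i\le j}$, and evaluate the defining congruence at $x=e_k$ and at $x=e_k+e_l$. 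The arithmetic pivot is $\delta(C/S)\mid 2g-2$ (Remark \ref{R:deltaCS}\eqref{R:deltaCS1}): it makes every condition involving an off-diagonal entry $b_{kl}$ with $k\neq l$ automatic, so that $\coker(\ov{\gamma_T^{d}})$ is cut out by the single congruence $\delta(C/S)\mid(\div(d)+g-1)\,b_{11}$ together with the congruences $\delta(C/S)\mid \div(d)\,b_{1k}+(g-1)\,b_{kk}$ for $k\ge 2$; computing these rank-one and rank-two quotients, and using $\gcd(\delta(C/S),\div(d)+g-1)=\gcd(\delta(C/S),\div(d)+1-g)$, yields the claimed decomposition of $\coker(\ov{\gamma_T^{d}})$. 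The formula for $\coker(\w_T^{d})$ then follows from \eqref{E:coker-om3} once one checks that $\partial_T^{d}([b])$ is the homomorphism $x\mapsto[b(d,x)+(1-g)b(x,x)]\in\bbZ/\delta(C/S)\bbZ$ --- additive precisely because $\delta(C/S)\mid 2g-2$ --- and computes its image in $\Hom(\Lambda(T),\bbZ/\delta(C/S)\bbZ)$ in the same coordinates; alternatively one reads $\coker(\w_T^{d})=\Lambda^*(T)/\Im(\w_T^{d})$ directly off Theorem \ref{T:PicBunT>0}\eqref{T:PicBunT>02}. Finally, $\delta(C/S)=0$ forces $g=1$ (again by Remark \ref{R:deltaCS}\eqref{R:deltaCS1}) and is covered by the stated convention.

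The routine but delicate point, and the only genuine obstacle, will be the coordinate bookkeeping in this last step: tracking which of the congruences are automatic and verifying that the resulting finite group is exactly the asserted direct sum. A useful consistency check is that the orders of $\coker(\ov{\gamma_T^{d}})$ and $\coker(\w_T^{d})$ multiply to $\delta(C/S)^{\dim T}=|\Hom(\Lambda(T),\bbZ/\delta(C/S)\bbZ)|$, as forced by \eqref{E:coker-om3}.
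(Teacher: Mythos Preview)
Your proposal is correct and follows essentially the same approach as the paper. Both derive parts \eqref{C:PicRig>0A} and \eqref{C:PicRig>0B} by specializing the four-term sequence \eqref{E:coker-om2}, choose a basis $e_1,\dots,e_r$ of $\Lambda(T)$ with $d=\div(d)e_1$, and reduce the computation to the congruences $\delta(C/S)\mid(\div(d)+g-1)a_1$ and $\delta(C/S)\mid \div(d)c_{1k}+(g-1)a_k$ for $k\ge 2$ (the remaining coordinates being unconstrained because $\delta(C/S)\mid 2g-2$). The only organizational difference is that the paper packages this as evaluating the single composite $\ov{\partial_T^d}\colon \Bil^s(\Lambda(T))\to \Hom(\Lambda(T),\bbZ/\delta(C/S)\bbZ)$ on basis elements and then reads off $\coker(\ov{\gamma_T^d})\cong\Im(\ov{\partial_T^d})$ and $\coker(\w_T^d)\cong\coker(\ov{\partial_T^d})$ simultaneously, whereas you first compute $\coker(\ov{\gamma_T^d})$ from the image description in Theorem \ref{T:PicRig>0} and then obtain $\coker(\w_T^d)$ from \eqref{E:coker-om3}; the underlying arithmetic is identical.
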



The above Corollary for $G=\Gm$ recovers the main result of \cite{MR85}: a Poincar\'e line bundle exists on $\Bunr_{\Gm}^d(C/S)\times_S C$ (which is equivalent to the triviality of the $\Gm$-gerbe $\nu_{\Gm}^d$)  if and only if $\gcd(\delta(C/S),d+1-g)=1$ where $d\in \Lambda(\Gm)=\bbZ$. 

\begin{proof}
Part \eqref{C:PicRig>0A} and the first part of \eqref{C:PicRig>0B} follow from the above Theorem \ref{T:PicRig>0}. 

The second part of \eqref{C:PicRig>0B} follows by choosing a basis $\{e_1,\ldots, e_r\}$ of $\Lambda(T)$ (with $r=\dim T$) such that $d=\div(d)e_1$ and computing the composition 
$$
\ov{\partial_T^d}: \Bil^s(\Lambda(T))\twoheadrightarrow \coker(\ov{\gamma_T^d}) \xrightarrow{\partial_T^d} \Hom\left(\Lambda(T), \frac{\bbZ}{\delta(C/S)\bbZ}\right)
$$ 
on the basis $\left\{\{e_i^*\otimes e_i^*\}_i, \{e_i^*\otimes e_j^*+e_j^*\otimes e_i^*\}_{i>j}\right\}$ of $\Bil^s(\Lambda(T))$:
$$
\begin{sis}
\ov{\partial_T^d}(e_1^*\otimes e_1^*)=[\div(d)+1-g]e_1^*, \\
\ov{\partial_T^d}(e_i^*\otimes e_i^*)=[1-g]e_i^*\: \text{ for any } i\geq 2,\\
\ov{\partial_T^d}(e_1^*\otimes e_i^*+e_i^*\otimes e_1^*)=\div(d) e_i^*\: \text{ for any } i\geq 2,\\
\ov{\partial_T^d}(e_i^*\otimes e_j^*+e_j^*\otimes e_i^*)=0 \: \text{ for any } 2\leq i<j.\\
\end{sis}
$$
\end{proof}

We end this subsection by computing the relative Picard group of $\Bunr_{G}^{\delta}(C/S)$ and the group $\coker(\w_G^{\delta})$ for families of genus $0$, under certain assumptions.

\begin{teo}\label{T:PicRig0}
Let $\pi:C\to S$ is a family of smooth curves of genus  $g(C/S)=0$ over an integral and regular quotient stack $S$. Assume that at least one of the following two assumptions is satisfied:
\begin{enumerate}[(a)]
\item \label{ass-a} the family $C/S$ is Zariski locally trivial;
\item \label{ass-b} the derived subgroup $\scr D(G)$ is simply connected.  
\end{enumerate}
For a given $\delta\in \pi_1(G)$, consider the homomorphism 
\begin{equation}\label{E:ev-dss}
\begin{aligned}
\wh{\ev}_{\scr D(G)}^{\delta}: \left\{\begin{aligned} b\in \left(\Sym^2 \Lambda^*(T_{G^{\sc}})\right)^{\mathscr W_G}: \\ b^{\bbQ}(d^{\ss},-)_{|\Lambda(T_{\scr D(G)})}\text{ is integral }\end{aligned}\right\} & \longrightarrow \frac{\Lambda^*(T_{\scr D(G)})}{\Lambda^*(T_{G^{\ad})}} \\
b &\mapsto b(\delta^{\ss}\otimes -):=[b^{\bbQ}(d^{\ss}\otimes -)], 
\end{aligned}
\end{equation}
where $d^{\ss}\in \Lambda(T_{G^{\ss}})$ is any lift of $\delta^{\ss}:=\pi_1(\ss)(\delta)\in \pi_1(G^{\ss})=\frac{\Lambda(T_{G^{\ss}})}{\Lambda(T_{G^{\sc}})}$ and $d^{\bbQ}$ is the rational extension of $b$ to $\Lambda(T_{G^{\ss}})$.

\begin{enumerate}
\item \label{T:PicRig0a} We have an isomorphism
$$
\RPic(\Bunr_{G}^{\delta}(C/S))\cong \ker(\wh{\ev}_{\scr D(G)}^{\delta}),
$$
which is functorial in $S$ and in $G$.
\item \label{T:PicRig0b} The homomorphism $\ov{\Lambda_{\scr D}^*}$ of \eqref{E:LambdaZG} induces a surjection 
 \begin{equation}\label{E:coker-om1}
 \wt{\Lambda_{\scr D}^*}:\coker(\w_G^{\delta})\twoheadrightarrow \coker(\wh{\ev}_{\scr D(G)}^{\delta}),
 \end{equation}
 whose kernel is equal to 
 \begin{equation}\label{E:ker-coker}
 \ker( \wt{\Lambda_{\scr D}^*})=
 \begin{cases}
 0 & \text{ if either } \delta(C/S)=1 \: \text{ or } \delta^{\ab} \: \text{ is $2$-divisible in } \Lambda^*(G^{\ab}),\\
 \bbZ/2\bbZ  & \text{ if } \delta(C/S)=2 \: \text{ and } \delta^{\ab} \: \text{ is not $2$-divisible in } \Lambda^*(G^{\ab}).\\
 \end{cases}
 \end{equation}
\end{enumerate}
\end{teo}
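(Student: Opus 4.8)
The plan is to deduce the statement from the relative version of the Leray exact sequence \eqref{E:seqLeray}, the exact sequence of Corollary \ref{C:PicG=0}, the exact sequence \eqref{E:LambdaZG} for $\Lambda^*(\scr Z(G))$, and the description of the image of the torus weight function in Theorem \ref{T:PicBunT-0}, by a snake‑lemma argument in the spirit of the proofs of \cite[Thms. 5.5 and 5.7]{FV2}. Since $g(C/S)=0$, the relative Leray sequence identifies $\RPic(\Bunr_G^{\delta}(C/S))$ with $\ker(\w_G^{\delta})$, so it suffices to compute this kernel together with the cokernel of $\w_G^{\delta}$. As a first, purely toric input, applying the same to $G^{\ab}$ and using that in genus $0$ the rigidification $\Bun_{G^{\ab}}^{\delta^{\ab}}(C/S)\fatslash\scr Z(G^{\ab})=\Bun_{G^{\ab}}^{\delta^{\ab}}(C/S)\fatslash G^{\ab}$ is $S$ (as in the proof of Theorem \ref{T:PicBunT-0}\eqref{T:PicBunT-02}), the weight homomorphism $\w_{G^{\ab}}^{\delta^{\ab}}\colon\RPic\Bun_{G^{\ab}}^{\delta^{\ab}}(C/S)\hookrightarrow\Lambda^*(G^{\ab})$ is injective, and by Theorem \ref{T:PicBunT-0}\eqref{T:PicBunT-03} together with Remark \ref{R:deltaCS}\eqref{R:deltaCS4} its cokernel is $0$ when $\delta(C/S)=1$ and equals $\Lambda^*(G^{\ab})/\{\chi\colon\chi(\delta^{\ab})\text{ even}\}$ when $\delta(C/S)=2$; in other words $\coker(\w_{G^{\ab}}^{\delta^{\ab}})$ is trivial unless $\delta(C/S)=2$ and $\delta^{\ab}$ is not $2$‑divisible, in which case it is $\bbZ/2\bbZ$ — precisely the dichotomy of \eqref{E:ker-coker}.

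Next, under assumption \eqref{ass-a} or \eqref{ass-b} the homomorphism $p_G^{\delta}$ of Corollary \ref{C:PicG=0} is surjective, so, writing $B$ for its target in \eqref{E:ex-Pic-g0} (which is also the domain of $\wh{\ev}_{\scr D(G)}^{\delta}$ in \eqref{E:ev-dss}), the relative version of \eqref{E:ex-Pic-g0}, the sequence \eqref{E:LambdaZG} and the weight maps assemble into a commutative diagram with exact rows
\begin{equation*}
\xymatrix{
0 \ar[r] & \RPic\Bun_{G^{\ab}}^{\delta^{\ab}}(C/S) \ar[r]^{\ab_\sharp^*} \ar[d]^{\w_{G^{\ab}}^{\delta^{\ab}}} & \RPic\Bun_G^{\delta}(C/S) \ar[r]^{p_G^{\delta}} \ar[d]^{\w_G^{\delta}} & B \ar[r] \ar[d]^{\wh{\ev}_{\scr D(G)}^{\delta}} & 0 \\
0 \ar[r] & \Lambda^*(G^{\ab}) \ar[r]^{\ov{\Lambda_{\ab}^*}} & \Lambda^*(\scr Z(G)) \ar[r]^{\ov{\Lambda_{\scr D}^*}} & \Lambda^*(\scr Z(\scr D(G))) \ar[r] & 0
}
\end{equation*}
whose bottom row is \eqref{E:LambdaZG}. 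Commutativity of the left square is the functoriality of \eqref{E:seqLeray} along $\ab\colon G\to G^{\ab}$; commutativity of the right square follows by unwinding the construction of $p_G^{\delta}$ from $c_G^{\delta}$ in the proof of Corollary \ref{C:PicG=0}, using the defining relation of $\NS\Bun_G^{\delta}(\bbP^1)$ and the fact that $\w_G^{\delta}$ is recovered as the central‑weight coordinate of $c_G^{\delta}$ (Theorem \ref{T:PicG=0}\eqref{T:PicG=01}), and amounts to the equality $\ov{\Lambda_{\scr D}^*}\circ\w_G^{\delta}=\wh{\ev}_{\scr D(G)}^{\delta}\circ p_G^{\delta}$. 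Running the snake lemma and using $\ker(\w_{G^{\ab}}^{\delta^{\ab}})=\RPic(S)=0$ then gives the exact sequence
\begin{equation*}
0\to\RPic\Bunr_G^{\delta}(C/S)\to\ker(\wh{\ev}_{\scr D(G)}^{\delta})\xrightarrow{\partial}\coker(\w_{G^{\ab}}^{\delta^{\ab}})\xrightarrow{\wt{\Lambda_{\ab}^*}}\coker(\w_G^{\delta})\xrightarrow{\wt{\Lambda_{\scr D}^*}}\coker(\wh{\ev}_{\scr D(G)}^{\delta})\to0 .
\end{equation*}

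Both parts are thereby reduced to the computation of the connecting homomorphism $\partial$: once $\partial=0$ is known, the first three terms give the isomorphism $\RPic\Bunr_G^{\delta}(C/S)\cong\ker(\wh{\ev}_{\scr D(G)}^{\delta})$ of part \eqref{T:PicRig0a}, and the last three give the surjection $\wt{\Lambda_{\scr D}^*}$ of part \eqref{T:PicRig0b} with $\ker(\wt{\Lambda_{\scr D}^*})\cong\coker(\w_{G^{\ab}}^{\delta^{\ab}})$, which is \eqref{E:ker-coker} by the first paragraph. Establishing $\partial=0$ is the step I expect to be the main obstacle. Under assumption \eqref{ass-a} it is automatic, since then $\delta(C/S)=1$ and $\coker(\w_{G^{\ab}}^{\delta^{\ab}})=0$. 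Under assumption \eqref{ass-b} we have $\scr D(G)=G^{\sc}$ and $B=(\Sym^2\Lambda^*(T_{G^{\sc}}))^{\mathscr W_G}$ (Lemma \ref{L:coker-sym}\eqref{L:coker-sym3}), and here I would use the functorial splitting of $0\to\Pic\Bun_{G^{\ab}}^{\delta^{\ab}}(C/S)\to\Pic\Bun_G^{\delta}(C/S)\to(\Sym^2\Lambda^*(T_{G^{\sc}}))^{\mathscr W_G}\to0$ from Theorem \ref{T:PicBunGAss}: for $b\in\ker(\wh{\ev}_{\scr D(G)}^{\delta})$ one lifts it to a transgression class $\tau_G^{\delta}(\wt b)$ with $\wt b\in(\Sym^2\Lambda^*(T_G))^{\mathscr W_G}$ restricting to $b$, computes $\w_G^{\delta}(\tau_G^{\delta}(\wt b))=[\wt b(d\otimes-)]$ (by passing to the maximal torus and using \eqref{E:trasgrGT} and \eqref{E:wgt-Del}), and reads off $\partial(b)$ as the class of the $\Lambda^*(G^{\ab})$‑component of this character; the required vanishing then comes down to a parity comparison using that $\wt b(d\otimes d)$ is even and that $\wh{\ev}_{\scr D(G)}^{\delta}(b)=0$ controls the semisimple component. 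The delicate point is exactly this reconciliation of two a priori unrelated order‑two phenomena — the index‑two image of the toric weight function when $\delta(C/S)=2$ on the one hand, and the structure of $\ov{\Lambda_{\ab}^*}\colon\Lambda^*(G^{\ab})\hookrightarrow\Lambda^*(\scr Z(G))$ and of $\wh{\ev}_{\scr D(G)}^{\delta}$ on the other — through a careful bookkeeping of the pairings involved; once it is settled, everything else is formal, and functoriality in $S$ and $G$ is inherited from that of all the ingredients.
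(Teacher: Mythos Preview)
Your proposal is correct and follows essentially the same route as the paper: the identical snake-lemma diagram built from Corollary~\ref{C:PicG=0} and \eqref{E:LambdaZG}, the identification of $\coker(\w_{G^{\ab}}^{\delta^{\ab}})$ via Theorem~\ref{T:PicBunT-0}, and the reduction of everything to the vanishing of the connecting map $\partial$, which is then handled in the nontrivial case $\delta(C/S)=2$ (hence $\scr D(G)=G^{\sc}$) by lifting $b$ to $\tau_G^{\delta}(\wt b)$ and observing that $\partial(b)=[\chi(\delta^{\ab})]=[\wt b(d\otimes d)]\in\bbZ/2\bbZ$ vanishes because $\wt b$ is even. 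The only cosmetic difference is that the paper lifts $b$ via Lemma~\ref{L:coker-sym}\eqref{L:coker-sym1} rather than invoking the splitting of Theorem~\ref{T:PicBunGAss}, but this amounts to the same choice of preimage.
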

This Theorem can be deduced for the universal family $\cC_{0,n}\to \cM_{0,n}$ with $n\geq 1$ (which is Zariski locally trivial)  from \cite[Rmk. 5.10]{FV2}. 
Note that if $\scr D(G)$ is simply connected, then the homomorphism $\wh{\ev}_{\scr D(G)}^{\delta}$ coincides with the homomorphism $\ev_{G^{\sc}}^{\delta}$  by Lemma \ref{L:coker-sym}\eqref{L:coker-sym3}.

\begin{proof}
Set 
$$
Q:= \left\{\begin{aligned} b\in \left(\Sym^2 \Lambda^*(T_{G^{\sc}})\right)^{\mathscr W_G}: \\ b^{\bbQ}(d^{\ss},-)_{|\Lambda(T_{\scr D(G)})}\text{ is integral }\end{aligned}\right\}.
$$

Consider the following diagram 
\begin{equation}\label{E:diagwt}
\xymatrix{ 
0\ar[r] &  \RPic \left(\Bun_{G_{\ab}}^{\delta^{\ab}}(C/S)\right)\ar[r]^{\ab_\sharp^*} \ar[d]^{\w_{G^{\ab}}^{\delta^{\ab}}}&  \RPic \left(\Bun_G^{\delta}(C/S)\right)\ar[r] \ar[d]^{\w_{G}^{\delta}}& Q \ar[r] \ar[d]^{\wh{\ev}_{\scr D(G)}^{\delta}} &  0\\
0\ar[r] & \Lambda^*(G^{\ab})\ar[r]^{\ov{\Lambda_{\ab}^*}} & \frac{\Lambda^*(T_G)}{\Lambda^*(T_{G^{\ad}})} \ar[r]^{\ov{\Lambda_{\scr D}^*}}& \frac{\Lambda^*(T_{G^{\sc}})}{\Lambda^*(T_{G^{\ad}})} \ar[r] &  0.
} 
\end{equation}
where the first row is exact by Corollary \ref{C:PicG=0}, the second row is exact by \eqref{E:LambdaZG} and the diagram is commutative by the same proof of \cite[Prop. 5.2, Prop. 5.4]{FV2}.

By applying the snake lemma and using that $\w_{G^{\ab}}^{\delta^{\ab}}$ is injective by Theorem \ref{T:PicBunT-0}\eqref{T:PicBunT-02}, we get an exact sequence
$$
0\to\RPic(\Bunr_{G}^{\delta}(C/S))\to \ker(\wh{\ev}_{\scr D(G)}^{\delta} \xrightarrow{\partial} \coker(\w_{G^{\ab}}^{\delta^{\ab}}) \to \coker(\w_G^{\delta})\xrightarrow{\wt{\Lambda_{\scr D}^*}} \coker(\wh{\ev}_{\scr D(G)}^{\delta})\to 0.
$$
Observe now that Theorem \ref{T:PicBunT-0}\eqref{T:PicBunT-03} implies that $\coker(\w_{G^{\ab}}^{\delta^{\ab}})$ is equal to the group in the right hand side of \eqref{E:ker-coker}. Hence, we conclude by the previous exact sequence and the following 

\un{Claim:}  the coboundary map $\partial$ is zero. 

Indeed,  assume that $\coker(\w_{G^{\ab}}^{\delta^{\ab}})\cong \bbZ/2\bbZ$, for otherwise the statement is trivially true. In particular, $C/S$ is not Zariski locally trivial and hence $\scr D(G)=G^{\sc}$ due to our assumptions. This implies that $\wh{\ev}_{\scr D(G)}^{\delta}=\ev_{G^{\sc}}^{\delta}$ as observed above. 
Let $b\in \ker(\ev_{G^{\sc}}^{\delta})\subset \left(\Sym^2 \Lambda^*(T_{G^{\sc}})\right)^{\mathscr W_G}$. By Lemma \ref{L:coker-sym}\eqref{L:coker-sym1}, we can choose $\wt b\in \left(\Sym^2 \Lambda^*(T_{G})\right)^{\mathscr W_G}$ such that $\wt b_{|\Lambda(T_{G^{\sc}})\times \Lambda(T_{G^{\sc}})}=b$, which implies that the class of $\tau_G^{\delta}(\wt b)\in \RPic \left(\Bun_G^{\delta}(C/S)\right)$  is a lift of $b$. By the same argument of \cite[Def./Lemma 3.7, Prop. 5.2]{FV2}, we have that 
$$
\w_G^{\delta}(\tau_G^{\delta}(\wt b))=\wt b(\delta\otimes -):=[\wt b(d\otimes -)]\in \frac{\Lambda^*(T_G)}{\Lambda^*(T_{G^{\ad}})},
$$
where $d\in \Lambda(T_G)$ is any lift of $\delta$. Since $b$ is in the kernel of $\ev_{G^{\sc}}^{\delta}$, the element $\w_G^{\delta}(\tau_G^{\delta}(\wt b))$ lies in the image of $\ov{\Lambda_{\ab}^*}$, i.e. there exists $\chi\in \Lambda^*(G^{\ab})$ such that 
\begin{equation}\label{E:chi-b}
\wt b(\delta\otimes -)=\chi(\Lambda_{\ab}(-)).
\end{equation}
By the definition of the boundary homomorphism, we have that 
$$
\partial(b)=[\chi(\delta^{\ab})] \in \bbZ/2\bbZ\cong \coker(\w_{G^{\ab}}^{\delta^{\ab}}). 
$$
Since $\delta^{\ab}=\Lambda_{\ab}(d)$ for any lift $d\in \Lambda(T_G)$ of $\delta$, \eqref{E:chi-b} implies that 
$$\chi(\delta^{\ab})=\wt b(\delta\otimes b)=\wt b(d\otimes d).$$
But $\wt b(d\otimes d)$ is even since $\wt b \in  \left(\Sym^2 \Lambda^*(T_{G})\right)^{\mathscr W_G}\cong \Bil^{s,\ev}(\Lambda(T_G))^{\scr W_G}$, which forces $\partial(b)$ to be zero, q.e.d.
\end{proof}

\noindent {\bf Acknowledgments.}

We thank Guido Lido for useful conversations concerning Remark \ref{R:NSjump}.

FV is funded by the MIUR  ``Excellence Department Project'' MATH@TOV, awarded to the Department of Mathematics, University of Rome Tor Vergata, CUP E83C18000100006, by the  PRIN 2022 ``Moduli Spaces and Birational Geometry'' and  the PRIN 2020 ``Curves, Ricci flat Varieties and their Interactions'' funded by MIUR,  by the CMUC (Centro de Matem\'atica da Universidade de Coimbra).

\bibliographystyle{alpha}
\bibliography{BiblioVec}
\end{document}